\newtheorem{theorem}{Theorem}[section]
\newtheorem{prop}[theorem]{Proposition}
\newtheorem{remark}[theorem]{Remark}
\newenvironment{customthm}[1]
  {\innercustomthm}
  {\endinnercustomthm}
\renewcommand{\l}{\lambda}
\newcommand{\R}{\mathbb R}
\newcommand{\p}{\partial}
\newcommand{\tA}{{\tilde{A}}}
\begin{document}
\title[Large data global regularity for the Faddeev model]{Large data global regularity for the $2+1$-dimensional equivariant Faddeev model}

\author{Dan-Andrei Geba and Manoussos G. Grillakis}

\address{Department of Mathematics, University of Rochester, Rochester, NY 14627, U.S.A.}
\email{dangeba@math.rochester.edu}
\address{Department of Mathematics, University of Maryland, College Park, MD 20742, U.S.A.}
\email{mggrlk@math.umd.edu}

\date{}

\begin{abstract}
This article addresses the large data global regularity for the equivariant case of the $2+1$-dimensional Faddeev model and shows that it holds true for initial data in $H^s\times H^{s-1}(\R^2)$ with $s>3$.
\end{abstract}

\subjclass[2000]{81T13}
\keywords{Faddeev model; Skyrme model; global solutions.}

\maketitle


\section{Introduction}


\subsection{Statement of the problem and main result} 

An important classical field theory that models elementary heavy particles by topological solitons was proposed by Faddeev in \cite{F75, F76}. It is worth knowing that the Faddeev model admits knotted solitons. This theory is described by the action
\begin{equation}
S=  \int_{\R^{3+1}}\left\{\frac{1}{2}\partial_\mu {\bf n}\cdot \partial^\mu {\bf n}\, + \,\frac{1}{4}(\partial_\mu {\bf n}\wedge\partial_\nu {\bf n})\cdot(\partial^\mu {\bf n}\wedge\partial^\nu {\bf n})\right\}dg,
\label{fdv}
\end{equation}
where $\wedge$ denotes the usual cross product of vectors in $\mathbb{R}^3$ and ${\bf n}: \R^{3+1}\to \mathbb{S}^2$ are maps from the classical Minkowski spacetime, with $g= \text{diag}(-1,1,1,1)$, into the unit sphere of $\mathbb{R}^{3}$ endowed with the round metric. The associated Euler-Lagrange equations are given by  
\begin{equation}
{\bf n}\wedge\partial_\mu\partial^\mu {\bf n}+(\partial_\mu[{\bf n}\cdot(\partial^\mu {\bf n}\wedge\partial^\nu {\bf n})])\partial_\nu {\bf n}=0,
\label{fsys}
\end{equation}
which is a system of quasilinear wave equations. One can naturally extend this model by switching the domain of ${\bf n}$ from $\R^{3+1}$ to $\R^{n+1}$, which is also equipped with the Minkowski metric. 

The Faddeev model is intimately tied to the celebrated Skyrme model \cite{S1, S2, S3}, also known to be  the first classical theory modeling particles by topological solitons. The action for the Skyrme model is specified by
\begin{equation}
\label{sk}
S=\int_{\mathbb{R}^{3+1}} \left\{\frac 12 \langle\partial^\mu\phi, \partial_\mu\phi \rangle_h+\frac{\alpha^2}{4}\left( \langle\partial^\mu\phi, \partial_\mu\phi \rangle_h^2-\langle\partial^\mu\phi, \partial^\nu\phi \rangle_h \langle\partial_\mu\phi, \partial_\nu\phi \rangle_h \right)\right\}dg,
\end{equation}
where $\alpha$ is a constant having the dimension of length and $\phi: \R^{3+1}\to \mathbb{S}^3$ are maps from the $3+1$-dimensional Minkowski spacetime into the $3$-dimensional unit sphere. If one restricts the image of $\phi$ to be the equatorial 2-sphere of $\mathbb{S}^3$ (identified in this case with $\mathbb{S}^2$) by prescribing 
\begin{equation*}
\phi=(u, {\bf n})=(\pi/2, {\bf n}),
\end{equation*} 
where the round metric on $\mathbb{S}^3$ is 
\begin{equation*}
 h=du^2+\sin^2 u\,d{\bf n}^2,\qquad 0 \leq u\leq \pi, \qquad {\bf n}\in\mathbb{S}^2,
\end{equation*}
and sets $\alpha=1$, then the Skyrme action \eqref{sk} reduces to the Faddeev one \eqref{fdv}. We ask the interested reader to consult our monograph \cite{GGr} and references therein for a more comprehensive view on the physical descriptions and motivations for both models.

In our recent work \cite{GGr-17}, we studied the large data global regularity question for the equivariant case of the Skyrme model. For this paper, using a comparable approach, we plan to investigate the same issue corresponding to the Faddeev model. Our findings concern the $2+1$-dimensional equivariant version of this theory and this is because, in our opinion, this is the most natural setting in which to impose an equivariant ansatz on the Faddeev model. Further motivation for this choice will be provided in the next subsection. 

Thus, we work with maps ${\bf n}: (\mathbb{R}^{2+1},g)\to(\mathbb{S}^2,h)$ satisfying
\begin{equation*}
{\bf n}(t,r,\omega)\,=\,(u(t,r),\omega), \quad g= -dt^2+dr^2+r^2\,d\omega^2,\quad h=du^2+\sin^2u\,d\omega^2,
\end{equation*}
and solving \eqref{fsys}. It is straightforward to deduce that the only germane equation to be analyzed is the 
one for the azimuthal angle $u$,
\begin{equation}
\left(1+\frac{\sin^2u}{r^2}\right)(u_{tt}-u_{rr})-\left(1-\frac{\sin^2u}{r^2}\right)\frac{u_r}{r}+\frac{\sin2u}{2r^2}\left(1+u_t^2-u_r^2\right)=0, \label{main}
\end{equation}
which is of quasilinear type. Associated to this equation, there exists an a priori conserved energy given by\begin{equation}
E[u](t)=\int_0^\infty \left\{\left(1 + \frac{\sin ^2 u}{r^2}\right)\frac{u_t^2+u_r^2}{2}+
\frac{\sin ^2 u}{2r^2} 
\right\}\,r dr. \label{tote}
\end{equation}
We are interested in studying finite energy solutions, which necessarily obey 
\[
u(t,0)\,\equiv\, u(t,\infty)\,\equiv\,0\,(\text{mod}\,\pi).\]
The integer
\[
\frac{u(t,\infty)-u(t,0)}{\pi}
\]
is called the \textit{topological charge} of the map ${\bf n}$ and, like the energy, it is also conserved in time. Accordingly, we make the assumption 
\begin{equation}
u(t,0)=N_1\pi, \quad N_1\in \mathbb{N}, \qquad u(t,\infty)=0.
\label{bdry}
\end{equation}

The following theorem is the main result of this article. 
\begin{theorem}
Let $(u_0,u_1)$ be radial initial data with
\[
(u_0,u_1)\in H^s\times H^{s-1}(\R^2), \quad s>3,
\]
which meet the compatibility conditions
 \[u_0(0)=N_1\pi, \qquad u_0(\infty)=u_1(0)=u_1(\infty)=0.\] 
Then there exists a global radial solution $u$ to the Cauchy problem associated to \eqref{main} with $(u(0),u_t(0))=(u_0,u_1)$, satisfying \eqref{bdry} and  
\[
u\in C([0,T], H^s(\R^2))\cap C^1([0,T], H^{s-1}(\R^2)), \qquad (\forall)\,T>0.
\] 
\label{main-th}
\end{theorem}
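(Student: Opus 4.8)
The plan is the standard three-step scheme for quasilinear wave equations: local well-posedness in $H^s\times H^{s-1}$, a continuation criterion reducing matters to an a priori bound on a subcritical pointwise norm of $\partial u$, and the derivation of that bound from the conserved energy \eqref{tote} together with the defocusing effect of the quartic terms in \eqref{main}; only the last step is genuinely hard. For the local theory, since $u_0(0)=N_1\pi$ I would set $w=u-N_1\pi$, a radial element of $H^s(\R^2)$ with $w(0)=w(\infty)=0$, so that $\sin^2u=\sin^2w$ and $\sin 2u=\sin 2w$. Because $s-1>2$, the embedding $H^{s-1}(\R^2)\hookrightarrow C^0$ and a Hardy inequality give that $\sin w/r$ is a bounded radial function, hence $1+\sin^2u/r^2$ is a smooth radial multiplier bounded below by $1$ and above by a constant depending on $\|w\|_{H^s}$; all coefficients of \eqref{main} then extend to regular functions on $\R^{2+1}$ with no hyperbolic degeneracy. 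Standard quasilinear machinery — mollification, symmetrized energy estimates for the linearized operator, and a fixed point — would then yield a unique solution in $C([0,T_0],H^s)\cap C^1([0,T_0],H^{s-1})$ with continuous dependence, on an interval $T_0=T_0(\|(u_0,u_1)\|_{H^s\times H^{s-1}})$, and the constraints \eqref{bdry} persist because $w(t,\cdot)\in H^s(\R^2)$ forces the boundary values to vanish.

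\textbf{Continuation criterion.} Let $[0,T^*)$ be the maximal existence interval. I would show that if $T^*<\infty$ then
\[
\limsup_{t\uparrow T^*}\Bigl(\|u_t(t)\|_{L^\infty}+\|u_r(t)\|_{L^\infty}+\bigl\|\tfrac{\sin u}{r}(t)\bigr\|_{L^\infty}\Bigr)=+\infty.
\]
Indeed, differentiating \eqref{main} up to order $\lceil s\rceil$ (or paralinearizing), the highest derivatives of $u$ solve a linear wave equation with principal part $(1+\sin^2u/r^2)\partial_t^2-\partial_r^2-\ldots$ and forcing bounded in $L^2$ by the displayed quantity times the $H^s\times H^{s-1}$ norm; Gronwall then controls $\|u(t)\|_{H^s}+\|u_t(t)\|_{H^{s-1}}$ by $\int_0^t$ of that quantity. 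So it suffices to bound the displayed supremum on every finite interval — which also bounds $u$ itself there, via $\|u(t)-u_0\|_{L^\infty}\le\int_0^t\|u_t\|_{L^\infty}$.

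\textbf{Global a priori bound.} Conservation of \eqref{tote} gives, for all $t$,
\[
\|\nabla u(t)\|_{L^2(\R^2)}+\Bigl\|\tfrac{\sin u}{r}(t)\Bigr\|_{L^2(r\,dr)}+\Bigl\|\tfrac{\sin u}{r}\,\partial u(t)\Bigr\|_{L^2(r\,dr)}\lesssim E[u_0,u_1]^{1/2},
\]
the last, weighted, term coming from the quartic part of the Faddeev energy. In two dimensions the $\dot H^1$ control alone is scaling-critical and does not embed into $L^\infty$; the extra weighted bound — the stiffness the Faddeev term adds at small scales — is what pushes the argument past criticality. The plan is to prove energy cannot concentrate at a point $(t_0,r_0)$ as $t_0\uparrow T^*$ and then turn non-concentration into the bound of the previous step. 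For $r_0>0$, on a region $r\sim r_0$ equation \eqref{main} is a one-dimensional quasilinear wave equation whose principal coefficient depends on $u$ only, and whose quadratic term $\frac{\sin2u}{2r^2}(u_t^2-u_r^2)$ is, for radial maps, the classical null form $Q_0$; straightening characteristics and integrating the resulting ODE along them, the finite-energy bound propagates a bound on $\partial u$, and finite speed of propagation rules out concentration off the axis. For $r_0=0$, the delicate case, I would exploit the boundary value $u(t,0)=N_1\pi$ together with the defocusing quartic term: using a cutoff version of the Morawetz/virial multiplier $(T^*-t)\,u_t+r\,u_r$ adapted to the backward light cones from $(T^*,0)$ should produce a monotonicity formula whose bulk is nonnegative and controls a spacetime integral of $\frac{\sin^2u}{r^2}(u_t^2+u_r^2)$ weighted by $(T^*-t)^{-1}$, forcing the energy flux through the shrinking cone to zero; with the conservation law this shows the energy inside the cone has a limit, and the quartic spacetime bound forbids a nonzero limit concentrated at the tip, so the limit is $0$. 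A small-energy local regularity statement — the estimates above localized to a cone, where small energy renders the nonlinear and singular terms perturbative — then bounds $\partial u$ near $(T^*,0)$, contradicting maximality.

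\textbf{Main obstacle.} I expect the crux to be exactly this last step: ruling out energy concentration at the spatial origin. For the pure equivariant wave map in $2+1$ dimensions such concentration does occur (bubbling of a harmonic map), so the proof must quantitatively use the coercivity of the quartic Faddeev energy to make a concentrating profile cost infinite energy. Arranging the Morawetz/virial identity so that the good quartic term genuinely controls the cone flux, uniformly as the cone collapses onto the axis, and closing the bootstrap with the localized high-regularity estimates — where the hypothesis $s>3$ enters — is where essentially all the work will lie.
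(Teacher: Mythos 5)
Your first two steps (local well-posedness at $H^s\times H^{s-1}$ regularity and reduction to a continuation criterion) are in the same spirit as the paper, which performs the substitution \eqref{utov}, passes to a $4+1$-dimensional problem for $v$, and invokes H\"{o}rmander's local theory so that everything rests on the a priori bound \eqref{livr} of Theorem \ref{main-th-v-2}. The genuine gap is in your third step, which is the entire content of the theorem: the global a priori bound is not proved, only conjectured. You propose a Struwe/Shatah--Tahvildar-Zadeh style non-concentration scheme (Morawetz--virial multiplier on collapsing backward cones, plus a ``small energy in a cone implies regularity'' statement), but neither ingredient is available here and you yourself defer ``essentially all the work.'' The decisive obstruction is the one the paper emphasizes: the problem is quasilinear and supercritical with respect to the energy (the critical regularity sits at the $H^2(\R^2)$ level, the conserved quantity \eqref{tote} only at $H^1$), so even a perfect non-concentration statement for the energy does not render the equation perturbative inside a small cone --- smallness of the energy does not control the critical norm, and no localized small-energy regularity theorem is known for \eqref{main}. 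In addition, the multiplier identity for the operator with coefficient $1+\sin^2u/r^2$ has no sign properties you can point to (the coercive weight $\sin^2 u/r^2$ degenerates precisely where $u$ is near $N_1\pi$, i.e.\ near the axis where you need it), and your off-axis step --- that for $r\sim r_0>0$ the finite-energy bound ``propagates a bound on $\partial u$'' along characteristics --- is not an argument either, since generic $1$D quasilinear waves form shocks from smooth data; some structure must be used, and none is exhibited.

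For contrast, the paper's route is entirely different and does not pass through concentration--compactness or Morawetz identities. After the substitution $u=rv+\varphi$ it introduces the nonlinear transform $\Phi$ of \eqref{Phi-final}, chosen so that \eqref{Box-Phi} is an almost linear wave equation in $\R^{4+1}$ with controllable right-hand side; the only genuinely nonperturbative input from the conservation law is the pointwise decay \eqref{decay-u} (hence \eqref{decay-v-0}), obtained by integrating $|\sin u|\,|u_r|$ against the two energy densities. The bound \eqref{livr} is then reached by a regularity ladder for $\Phi$ --- energy bounds, then $\dot H^2$ and $\dot H^3$ via Strichartz estimates for the $4+1$-dimensional wave equation, then $\dot H^s$ via generalized energy estimates with fractional Leibniz, Kato--Ponce and Moser bounds --- and the hypothesis $s>3$ enters through this ladder and the Appendix estimates on the data, not through a localized small-energy theorem. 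So while your heuristic that the quartic (Faddeev) part of the energy penalizes concentration is morally the right mechanism, the proposal as written does not constitute a proof of the key estimate, and the strategy it sketches runs into the supercriticality barrier that the paper's construction of $\Phi$ is specifically designed to circumvent.
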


\begin{remark}
This result should be compared to what is known about the $2+1$-dimensional equivariant wave map equation, i.e.,
\begin{equation*}
u_{tt}-u_{rr}-\frac{u_r}{r}+\frac{\sin2u}{2r^2}=0,
\end{equation*}
which is of semilinear type and for which \eqref{main} could be seen as a quasilinear generalization. It may come as a surprise to learn that there are smooth data that lead to finite time collapse for solutions of this equation. We refer the reader to work by Rapha\"{e}l and Rodnianski \cite{RR} and references therein. 
\end{remark}

\subsection{Comments on previous relevant works and comparison to main result} 

Likely due to the intricate nature of the variational system \eqref{fsys}, initial investigations into the Faddeev model concentrated on its static properties. Faddeev and Niemi \cite{FN97} and Battye and Sutcliffe \cite{BS98, BS99} performed numerical simulations for various topological solitons, while Vakulenko and Kapitanski \cite{VK79} and Lin and Yang \cite{LY104, LY04} investigated the associated topologically-constrained energy-minimization problem. Further references on the static problem can be found in the excellent book by Manton and Sutcliffe \cite{MS}. 

For the time-dependent case, the most natural issue to study about either \eqref{fsys} or \eqref{main} is the well-posedness of the associated Cauchy problem. This is a very difficult enterprise, owing to the quasilinear nature of the equations in question and to the fact that the initial value problem is supercritical with respect to the energy (for more details, see \cite{GNZ}). 

To our knowledge, the first result concerning the evolution problem belongs to Lei, Lin, and Zhou \cite{LLZ11}, who showed that the $2+1$-dimensional system \eqref{fsys} is globally well-posed for smooth, compactly-supported initial data with small $H^{11}(\R^2)$ norm. This was followed by work of Geba, Nakanishi, and Zhang \cite{GNZ} in the equivariant case, which consists of global well-posedness and scattering for\eqref{main}, with $N_1=0$ and initial data having a small Besov-Sobolev norm at the level of $H^{2}(\R^2)$. 

Nevertheless, our paper is mostly related to an article by Creek \cite{C13}, who proved Theorem \ref{main-th} under the more constrained assumption $s\geq 4$. This is achieved by adapting a framework due to Li \cite{Li}, used in demonstrating a similar result for the Skyrme model. Li's approach also influenced our recent work \cite{GGr-17}, also on the Skyrme model, in which a result similar to Theorem \ref{main-th}
was proved for that particular problem. In the current paper, we try to emulate the argument in \cite{GGr-17} and, overall, the analysis follows along the same lines, being, in fact, more direct for certain steps. Nevertheless, there are a number of instances where we face challenges not present in the proof for the Skyrme model and we have to come up with novel ways to handle them. Throughout the paper, we make numerous remarks regarding the similarities and discrepancies between the argument in \cite{GGr-17} and the current one.

We conclude these comments by pointing out two more facts. First, the main difference between the present work  and Creek's is that our approach is able to handle fractional derivatives. Secondly, we believe our result is optimal when it comes to the tools used in its proof; however, we do not follow up on this issue here.


\subsection{Outline of the paper}

In the next section, the main result is reformulated in terms of a newly introduced function $v$ and its argument is reduced to the verification of a continuation criterion for $v$. Following this, we insert another auxiliary function $\Phi$, which is closely related to $v$, but is more tractable to the techniques we plan to use. In the same section, we perform one final reduction that leaves us to argue for the finiteness of certain Sobolev norms for derivatives of $\Phi$. For section 3, we gather the necessary notational conventions and the analytic toolbox relied upon throughout the article. Our analysis starts in section 4, where we prove energy-type bounds for both $v$ and $\Phi$, which yield preliminary fixed-time decay estimates. The subsequent two sections are devoted to upgrading this information to the level of $H^2$ and $H^3$ regularities, respectively. In section 7, we finish the argument by proving that $\Phi$ has just enough regularity to force the validity of the continuation criterion for $v$. We conclude the paper by including an appendix that confirms the Sobolev regularity required of various initial data in certain steps of the proof.


\subsection*{Acknowledgements}

The first author was supported in part by a grant from the Simons Foundation $\#$ 359727.


\section{Preliminaries}


\subsection{Introducing the function $v$ and initial reductions}

First, we recast the equation \eqref{main} as
\begin{equation}
\Box_{2+1}u= N(r,u,\nabla u), \label{main-2}
\end{equation}
with
\[
N(r,u,\nabla u):=\frac{-\frac{\sin2u}{2r^2}\left(1+u_t^2-u_r^2\right)}{1 + \frac{\sin ^2 u}{r^2}}-\frac{\frac{2\sin^2u}{r^3}u_r}{1 + \frac{\sin ^2 u}{r^2}}
\]
and
\[
\Box_{2+1}=\p_{tt}-\p_{rr}-\frac{1}{r}\p_r
\]
being the radial wave operator in $\R^{2+1}$. We perform the substitution
\begin{equation}
u(t,r)\,=\,r\,v(t,r)+\varphi(r),
\label{utov}
\end{equation}
where $\varphi:\R_+\to\R_+$ is a smooth, decreasing function, satisfying $\varphi\equiv N_1\pi$ on $[0,1]$ and 
$\varphi\equiv 0$ on $[2,\infty)$. We also need to insert a finer version of $\varphi$, denoted $\varphi_{<1}$, which has the same smoothness and monotonicity as $\varphi$, but now obeys $\varphi_{<1}\equiv 1$ on $[0,1/2]$ and $\varphi_{<1}\equiv 0$ on $[1,\infty)$. Moreover, the function $1-\varphi_{<1}$ is labelled $\varphi_{>1}$. Consequently, we derive that
\begin{equation}
\aligned
\Box_{4+1}v=&\frac{1}{r}\Delta_2\varphi+\frac{1}{r}\varphi_{>1}N(r, rv+\varphi, \nabla(rv+\varphi))+\frac{1}{r^2}\varphi_{>1}v\\
&+\varphi_{<1}\left(\frac{1}{r}N(r, rv, \nabla(rv))+\frac{1}{r^2}v\right)
\endaligned
\label{main-v}
\end{equation}
and 
\begin{equation}
\aligned
&\frac{1}{r}N(r, rv, \nabla(rv))+\frac{1}{r^2}v\\
&\qquad\qquad=\frac{1}{1+N_0(rv)v^2}\bigg\{N_1(rv)v^3+N_2(rv)v^5+N_3(rv)v(v_t^2-v_r^2)\\
&\qquad\qquad\qquad\qquad\qquad\qquad+2N_2(rv)rv^4v_r\bigg\},
\endaligned
\label{Nv-l1}
\end{equation}
with all $N_i=N_i(x)$ being even, analytic, and satisfying
\begin{equation}
\|\partial^kN_i\|_{L^\infty(\R)}\leq C_k, \qquad (\forall)\,k\in\mathbb{N}.
\label{dni-li}
\end{equation}
The reader can find the precise formulae for these functions in \cite{C13}.

The motivation for the substitution \eqref{utov} is that the proof of Theorem \ref{main-th} is reduced to the one of the following result, a fact that could be verified in a direct manner (e.g., see Subsection 2.3 in \cite{C13}).

\begin{theorem}
Let $(v_0,v_1)$ be radial initial data with
\[
(v_0,v_1)\in H^s\times H^{s-1}(\R^4), \quad s>3.
\]
Then there exists a global radial solution $v$ to the Cauchy problem associated to \eqref{main-v} with $(v(0),v_t(0))=(v_0,v_1)$, which satisfies 
\[
v\in C([0,T], H^s(\R^4))\cap C^1([0,T], H^{s-1}(\R^4)), \qquad \forall\,T>0.
\] 
\label{main-th-v}
\end{theorem}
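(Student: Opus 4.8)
The plan is to prove global regularity for the reformulated equation \eqref{main-v} by a continuation argument: one shows local well-posedness in $H^s \times H^{s-1}(\R^4)$ for $s>3$ (which is standard for quasilinear wave equations with smooth nonlinearities, since $s > 3 = \dim(\R^4)/2 + 1$ lands in the range where the energy method applies), and then one establishes an a priori bound showing that the $H^s \times H^{s-1}$ norm of the solution cannot blow up in finite time. The bulk of the work, and the substance of the remaining sections of the paper, is this a priori bound. The strategy, following \cite{GGr-17}, is hierarchical: first extract the conserved energy \eqref{tote}, translate it into an $L^2$-type bound on $v$ and $\nabla v$ (hence, after the substitution \eqref{utov}, a preliminary fixed-time decay estimate for $v$ via radial Sobolev embedding in $\R^4$); then bootstrap this up to $H^2$ control, then to $H^3$ control, and finally to $H^s$ for the full range $s>3$ by interpolation and a fractional-derivative estimate. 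At each stage, the gain in regularity feeds the nonlinear estimates needed to close the next stage.

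First I would introduce the auxiliary function $\Phi$ announced in the outline — a regularized or more tractable proxy for $v$ solving a closely related equation — and reduce the theorem to bounding Sobolev norms of derivatives of $\Phi$. The key observation making the whole scheme work is the structure of the nonlinearity in \eqref{Nv-l1}: on the region $r \lesssim 1$ the right-hand side is, up to the harmless factor $(1+N_0(rv)v^2)^{-1}$, a sum of terms each at least \emph{cubic} in $(v, v_t, v_r)$, with smooth bounded coefficients by \eqref{dni-li}; on the region $r \gtrsim 1$ the $1/r$ and $1/r^2$ weights are bounded and the forcing term $\frac1r \Delta_2 \varphi$ is smooth and compactly supported. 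This cubic (rather than merely quadratic) null-form-free structure, combined with the dispersive decay in $\R^{4+1}$, is what allows the energy-type bounds of section 4 to be upgraded. I would run the standard energy estimate for $\Box_{4+1}$ applied to $\partial^\alpha v$ (or $\partial^\alpha \Phi$) for $|\alpha| \le 2$ and then $|\alpha| \le 3$, where the commutator terms and the differentiated nonlinearity are controlled by Moser-type (tame) product estimates, with the top-order term absorbed using the a priori decay of the lower-order norms already in hand; a Gronwall argument then yields a bound on $[0,T]$ for every $T$.

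The main obstacle I anticipate — and the place where the Faddeev analysis departs from the Skyrme one, as the authors flag — is controlling the $r \to 0$ behavior. The weights $1/r$, $1/r^2$, $1/r^3$ in $N(r,u,\nabla u)$ and in the passage to \eqref{main-v} are singular, and although the substitution $u = rv + \varphi$ and the cubic structure in \eqref{Nv-l1} are precisely designed to cancel the worst of this, making the cancellation quantitative at the level of $H^2$ and $H^3$ norms — i.e.\ showing that expressions like $r^{-1} N_i(rv) v^3$ and $r^{-2}\varphi_{>1} v$ genuinely lie in the right weighted Sobolev spaces after the boundary condition $u(t,0)=N_1\pi$ is accounted for — requires careful Hardy-type inequalities and a precise tracking of the vanishing of $v$ and its derivatives at the origin. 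The secondary difficulty is the fractional-derivative endpoint: pushing from integer $s=3,4$ down to all $s>3$ (the improvement over \cite{C13}) needs a fractional Leibniz / Kato–Ponce estimate adapted to the non-polynomial nonlinearity, plus the Sobolev regularity of the various initial data for $\Phi$, which is exactly what the appendix is there to supply. Once these weighted and fractional nonlinear estimates are in place, the continuation criterion for $v$ follows, completing the proof of Theorem \ref{main-th-v} and hence of Theorem \ref{main-th}.
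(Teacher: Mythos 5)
Your high-level skeleton (local well-posedness plus a continuation criterion, an auxiliary function $\Phi$, a bootstrap $H^1\to H^2\to H^3\to H^s$, an appendix for the data) does track the paper's outline, but the mechanism you propose for closing the bootstrap is not the paper's and, as stated, would not close. You identify the ``key observation'' as the cubic structure of \eqref{Nv-l1} and propose to run standard energy estimates for $\partial^\alpha v$ (or $\partial^\alpha\Phi$) with the top-order terms ``absorbed using the a priori decay of the lower-order norms already in hand'' and Gronwall. The only globally available a priori quantity, however, is the conserved energy \eqref{tote}, which lives at the $H^1$ level of a problem that is energy supercritical; it yields neither decay in time nor control of $\|\nabla v\|_{L^\infty_x}$ (note $H^2(\R^4)\not\subset L^\infty$), and \eqref{Nv-l1} still contains the quadratic-in-derivatives terms $N_3(rv)\,v\,(v_t^2-v_r^2)$ and $N_2(rv)\,r\,v^4v_r$. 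A Gronwall argument at the $H^2$ or $H^3$ level therefore has $\int_0^T\|\nabla v\|_{L^\infty_x}\,dt$ sitting in the exponent with no way to bound it, which gives only a local-in-time statement --- exactly the continuation problem you were supposed to solve, not a proof of it.

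The paper's actual key idea, which is absent from your proposal, is the \emph{specific} choice of $\Phi$ in \eqref{Phi-final}: because $\Phi$ is built from the integral $\int_0^v\tA^{1/2}\,dy$, its wave equation \eqref{Box-Phi} contains \emph{no} derivatives of $v$ at all, and in fact $|\Box_{4+1}\Phi|\lesssim|\Phi|+|\varphi_{\geq 1/2}|/r^3$ (see \eqref{boxphi-phi}). This removes the derivative nonlinearity from the problem entirely; the quasilinear/derivative structure is traded for the algebraic factor $\tA(r,v)$. The bootstrap is then run not by energy$+$Gronwall on $\partial^\alpha v$, but by: (i) the pointwise bound \eqref{decay-u} for $u$, obtained from the two-dimensional energy via the monotone function $I(w)=\int_0^w|\sin z|\,dz$, which yields the spatial decay \eqref{decay-A} of $\tA-1$; (ii) Strichartz estimates \eqref{Str-gen} applied to the equations \eqref{Box-Phit} for $\Phi_t$ and then for $\Phi_{tt}$, where the decay of $\tA-1$ lets one gain a factor $|I|^{6/7}$ on unit-length time intervals and iterate finitely many times up to $T$ --- no smallness of data and no dispersive decay of $v$ is used; (iii) the elliptic step $\Delta\Phi=\Phi_{tt}-\Box\Phi$ to recover $\dot H^2$ and $\dot H^3$ for $\Phi$; and (iv) energy estimates \eqref{en-hs} with fractional Leibniz/Kato--Ponce and interpolation for the fractional range $3<s<4$, after which \eqref{Phi-li}, radial Sobolev inequalities, and the relations \eqref{vt-phit}--\eqref{vr-phir} give the continuation criterion \eqref{livr}. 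Without the structural identity \eqref{boxphi-phi} and the Strichartz-in-unit-intervals argument driven by \eqref{decay-A}, your scheme has no source of the pointwise/decay information it needs, so the central step of the proof is missing.
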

\noindent To prove this theorem, we use a classical result (e.g., see Theorem 6.4.11 in H\"{o}rmander \cite{H97})) which allows us to obtain global solutions from local ones, which additionally satisfy a continuation criterion. Thus, the entire argument is reduced to demonstrating the next theorem.

\begin{theorem}
For any $0<T<\infty$ and $s>3$, a radial solution $v$ on $[0,T)$ to \eqref{main-v} with $(v(0),v_t(0))\in H^s\times H^{s-1}(\R^4)$  satisfies 
\begin{equation}
\|(1+r)(|v|+|\nabla_{t,x}v|)\|_{L_{t,x}^\infty([0,T)\times \R^4)}\,<\,\infty.
\label{livr}
\end{equation}
\label{main-th-v-2}
\end{theorem}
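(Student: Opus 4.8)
The plan is a bootstrap: starting from the conserved energy, one climbs a ladder of regularity estimates on the fixed finite interval $[0,T)$ until the solution is controlled uniformly in time at the level of $H^s$, at which point \eqref{livr} drops out by Sobolev embedding and a radial decay estimate in $\R^4$. As a first step I would translate the conservation of \eqref{tote} into the language of $v$. Writing $u=rv+\varphi$ with $\varphi$ the fixed smooth compactly supported profile, one computes $\Box_{2+1}(rv)=r\,\Box_{4+1}v-v/r$, so that finiteness of $E[u](t)$ is, modulo the harmless profile $\varphi$, equivalent to a uniform bound
\[
\sup_{t\in[0,T)}\Big(\,\|\nabla_{t,x}v(t)\|_{L^2(\R^4)}+\big\|\,v(t)/r\,\big\|_{L^2(\R^4)}\,\Big)\;<\;\infty .
\]
This is the base of the induction; it already forces, via Cauchy--Schwarz in the radial integral, the decay $|v(t,r)|\lesssim r^{-1}$ for $r\ge1$, uniformly in $t$, which is exactly what produces the weight $1+r$ in \eqref{livr}.

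The heart of the matter is then a sequence of energy estimates for the auxiliary function $\Phi$ of Section 2, carried out first at the levels $H^2$ and $H^3$ and then, since only $s>3$ is assumed, upgraded fractionally to $H^s$. After the splitting by $\varphi_{<1},\varphi_{>1}$, the source in the $\Phi$-equation is a fixed smooth compactly supported term together with terms that, by \eqref{Nv-l1}--\eqref{dni-li}, are at least cubic in $(v,\nabla v)$ with analytic coefficients $N_i(rv)$ all of whose derivatives are bounded. At each level one differentiates the equation, pairs with the time derivative of the result, and integrates over $\R^4$; the nonlinear terms are then handled by (i) Moser-type composition estimates for $x\mapsto N_i(rv)$ using \eqref{dni-li} (in fractional form at the last stage), (ii) Hardy inequalities in $\R^4$ to absorb leftover powers of $r$ — including the extra factor of $r$ in the $2N_2(rv)\,rv^4v_r$ term, which is harmless against the $r^{-1}$ decay of $v$ — and (iii) Gronwall on $[0,T)$, which costs only a constant depending on $T$ since $T<\infty$ (and finite speed of propagation may be used to localize). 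The crucial and delicate point is that, for large data, the a priori energy does \emph{not} control $\|v\|_{L^\infty}$, so the estimates cannot be allowed to close in a naive superlinear fashion; one must exploit the null/cancellation structure of the nonlinearity \eqref{Nv-l1} and integrations by parts so that, at each rung, the amplification is governed by quantities already controlled at the previous rung (and by the conserved energy), yielding finite — if $T$-dependent — bounds $\|\Phi(t)\|_{H^k}+\|\p_t\Phi(t)\|_{H^{k-1}}\lesssim_T 1$ for all $k\le s$. Translating back through the relation between $\Phi$ and $v$ gives $\sup_{t\in[0,T)}\big(\|v(t)\|_{H^s}+\|\p_tv(t)\|_{H^{s-1}}\big)<\infty$.

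Granting this, \eqref{livr} follows quickly. Because $s-1>2$, $H^{s-1}(\R^4)\hookrightarrow L^\infty(\R^4)$, so $\|v(t)\|_{L^\infty}+\|\nabla_{t,x}v(t)\|_{L^\infty}\lesssim_T 1$ uniformly in $t$, which disposes of the region $r\le1$. For $r\ge1$, since $v(t,\cdot)\to0$ at infinity,
\[
|v(t,r)|=\Big|\int_r^\infty\p_\rho v(t,\rho)\,d\rho\Big|\le\Big(\int_r^\infty|\p_\rho v(t,\rho)|^2\rho^3\,d\rho\Big)^{1/2}\Big(\int_r^\infty\rho^{-3}\,d\rho\Big)^{1/2}\lesssim\frac{\|\nabla v(t)\|_{L^2(\R^4)}}{r},
\]
and similarly $|\nabla_{t,x}v(t,r)|\lesssim r^{-1}\|\nabla^2v(t)\|_{L^2(\R^4)}$, both uniform in $t$ by the energy and $H^2$ bounds. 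Combining the two regimes yields $\|(1+r)(|v|+|\nabla_{t,x}v|)\|_{L^\infty_{t,x}([0,T)\times\R^4)}<\infty$.

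The main obstacle is the ladder of nonlinear energy estimates in the second paragraph, together with the mechanism by which it is closed for large data. Three features make it harder than the Skyrme analysis of \cite{GGr-17}: the Faddeev nonlinearity \eqref{Nv-l1} carries derivative-bearing cubic and quintic terms — notably $N_3(rv)\,v(v_t^2-v_r^2)$ and $2N_2(rv)\,rv^4v_r$ — so the trilinear and quintilinear weighted estimates must be set up from scratch; there is no smallness at the energy level, so the argument must be run on the fixed interval $[0,T)$ with no decay in $t$, relying on the radial $r$-decay instead; and the sharp range $s>3$ precludes a purely integer-order scheme, forcing the concluding step to be done with fractional differentiation, for which the composition estimates with the $N_i$ and the Leibniz rules must be arranged carefully. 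Once these nonlinear bounds are secured, the Gronwall/continuity mechanism and the closing Sobolev step are routine.
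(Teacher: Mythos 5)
There is a genuine gap at the heart of your proposal: the ladder of nonlinear energy estimates is asserted rather than proved, and the structure you assign to the equation you would estimate is not the right one. You describe the source in the ``$\Phi$-equation'' as consisting, after the $\varphi_{<1},\varphi_{>1}$ splitting, of the cubic/quintic derivative-bearing terms of \eqref{Nv-l1} (e.g.\ $N_3(rv)v(v_t^2-v_r^2)$ and $2N_2(rv)rv^4v_r$), and you propose to close the estimates by ``exploiting the null/cancellation structure'' and integrations by parts. But that splitting and those terms belong to the $v$-equation \eqref{main-v}; the entire point of the construction of $\Phi$ is that its wave equation \eqref{Box-Phi} contains \emph{no} derivative nonlinearities at all, and in fact obeys $|\Box\Phi|\lesssim|\Phi|+|\varphi_{\geq 1/2}|/r^3$ (and, after time differentiation, $|\Box\Phi_t|\lesssim|(\tA-1)\Phi_t|$, $|\Box\Phi_{tt}|\lesssim \Phi_t^2+|(\tA-1)\Phi_{tt}|$). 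Estimating the $v$-equation directly reintroduces exactly the quasilinear difficulty that $\Phi$ removes, and for large data there is no known null-form mechanism that closes integer- or fractional-order energy estimates for \eqref{main-v} by Gronwall alone; saying ``the amplification is governed by quantities already controlled at the previous rung'' is the conclusion one must prove, not a step. The actual closing mechanism here is concrete and different: Strichartz estimates for $\Box\Phi_t$ and $\Box\Phi_{tt}$ on time subintervals of length $\sim1$, where smallness comes from a factor $|I|^{6/7}$ and from fixed-time decay of $\tA(r,v)-1$ in Lebesgue norms (itself a consequence of radial Sobolev decay for $v$ and $\Phi$), followed at the fractional rung $s-1$ by the energy inequality \eqref{en-hs} combined with fractional Leibniz, Kato--Ponce, interpolation and Moser bounds for $\tA^{\pm1}(r,v)$. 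None of this machinery, nor any substitute for it, appears in your outline, so the central claim $\|\Phi(t)\|_{H^k}+\|\partial_t\Phi(t)\|_{H^{k-1}}\lesssim_T1$ is unsupported.

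Two further points. First, your base step is not correct as stated: finiteness of $E[u]$ is \emph{not} equivalent, modulo $\varphi$, to $\sup_t(\|\nabla_{t,x}v\|_{L^2(\R^4)}+\|v/r\|_{L^2(\R^4)})<\infty$, because the energy controls $\int \sin^2u\,/r\,dr$, not $\int (u-\varphi)^2/r\,dr$, and for large data with nonzero topological charge $\sin^2 u$ degenerates where $u$ crosses multiples of $\pi$; Hardy's inequality is also unavailable in this dimension count ($p=n$). One needs the pointwise decay \eqref{decay-u}, obtained from the energy via the auxiliary function $I(w)=\int_0^w|\sin z|\,dz$, together with the $T$-dependent bound $\|v\|_{L^\infty L^2}\lesssim\|v(0)\|_{L^2}+TE^{1/2}$, to reach \eqref{v-h1}. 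Second, your final translation step aims at $v\in L^\infty H^s(\R^4)$, which is more than is needed and is not immediate from bounds on $\Phi$ at fractional order; the efficient route is to keep the conclusion at the level of $\Phi$ (as in \eqref{Phi-li}) and recover $(1+r)(|v|+|\nabla_{t,x}v|)\in L^\infty$ from the pointwise identities $v_t=\tA^{-1/2}\Phi_t$ and \eqref{vr-phir} together with the decay estimates, rather than attempting a fractional composition argument to transfer $H^s$ regularity from $\Phi$ back to $v$. Your closing Sobolev/radial-decay computation for $r\ge1$ is fine, but it rests entirely on the unproved ladder.
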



\subsection{The construction of the auxiliary function $\Phi$ and further reductions}

We prove Theorem \ref{main-th-v-2} in quite a roundabout way, by showing that \eqref{livr} holds true using a newly constructed function $\Phi$. This satisfies an equation which is easier to study than  \eqref{main-v}. The first step in constructing $\Phi$ is directed at the derivative terms on the right-hand side of \eqref{main-v} and, for that purpose, we take
\[
\Phi_1(t,r)=\int^{u(t,r)}_{N_1\pi}\left(1+\frac{\sin^2w}{r^2}\right)^{1/2}dw,
\]
which satisfies the wave equation
\[
\Box_{2+1}\Phi_1=-\frac{1}{r^2}\Phi_1\,+\,\int^{u}_{N_1\pi}\left\{A^{1/2}-A^{-3/2}\right\}dw,
\] 
with
\[
A=A(r,w):=1+\frac{\sin^2w}{r^2}.
\]

Next, we handle the $1/r^2$ singularity by introducing
\[
\Phi_2(t,r)=\frac{\Phi_1(t,r)}{r},
\]
which solves
\[
\Box_{4+1}\Phi_2=\Phi_2\,-\,\frac{1}{r}\int^{u}_{N_1\pi}\left\{A^{-3/2}\right\}dw.
\]
Apparently, it seems that we have a new singularity in front of the integral to deal with. In fact, one can see that it is removable by writing
\[
\frac{1}{r}=\frac{\varphi_{<1}}{r}+\frac{\varphi_{>1}}{r} 
\]
and then making the  change of variable 
\[
w=N_1\pi+ry
\] 
in the integral multiplied by $\varphi_{<1}$. We need to make one more adjustment and this is because a formal calculation shows that we might have
\[
\|\Phi_2\|_{L^2(\{r\geq 1\})}=\infty,
\]
which is not what we expect from our approach.

We take care of this final issue by choosing
\[
\Phi=\Phi_2\,+\,\frac{1}{r}\varphi_{>1}\int_{0}^{N_1\pi}\left\{A^{-3/2}\right\}dw,
\]
which leads after careful computations to 
\begin{equation}
\Phi\,=\,\int_{0}^{v}\left(1+\frac{\sin^2(ry+\varphi)}{r^2}\right)^{1/2}dy\,+\,\frac{\varphi_{\geq 1/2}}{r^3}.
\label{Phi-final}
\end{equation}
The corresponding wave equation is
\begin{equation}
\Box_{4+1}\Phi=\Phi\,-\,\int_{0}^{v}\left\{\tA^{-3/2}\right\}dy\,+\,\frac{\varphi_{\geq 1/2}}{r^3},
\label{Box-Phi}
\end{equation}
where
\begin{equation}
\tA=\tA(r,y):=1+\frac{\sin^2(ry+\varphi(r))}{r^2}
\label{ta-formula}
\end{equation}
and $\varphi_{\geq 1/2}=\varphi_{\geq 1/2}(r)$ is a generic smooth function, with bounded derivatives of all orders and supported in the domain $\{r\geq 1/2\}$, which may change from line to line.

In order to prove Theorem \ref{main-th-v-2}, it is clear that we can additionally assume, without loss of generality, that $s$ is sufficiently close to $3$. In fact, we argue that if $3<s<4$, then
\begin{equation}
\aligned
\|\Phi\|_{L^\infty H^{s}([0,T)\times \R^4)}&+\|\Phi_t\|_{L^\infty H^{s-1}([0,T)\times \R^4)}+\|\Phi_{tt}\|_{L^\infty H^{s-2}([0,T)\times \R^4)}\\
&+\|\Phi_{ttt}\|_{L^\infty L^2([0,T)\times \R^4)}\,<\,\infty.
\endaligned
\label{Phi-li}
\end{equation}
Jointly with Sobolev embeddings and radial Sobolev inequalities, this estimate implies \eqref{livr}.


\section{Notations and analytic toolbox}


\subsection{Notational conventions}

First, we write $A\lesssim B$ to designate $A\leq CB$, where $C$ is a constant depending only upon parameters that are considered fixed throughout the paper. Two such parameters are the conserved energy \eqref{tote}, expressed in terms of the initial data $(u_0, u_1)$ in Theorem \ref{main-th} as
\begin{equation}
E:=\int_0^\infty \left\{\left(1 + \frac{\sin ^2 u_0}{r^2}\right)\frac{u_1^2+u_{0,r}^2}{2}+
\frac{\sin ^2 u_0}{2r^2} 
\right\}r dr,
\label{tote-0}
\end{equation}
and the time $0<T<\infty$ featured in Theorem \ref{main-th-v-2}. We write $A\sim B$ for the case when both $A\lesssim B$ and $B\lesssim A$ are valid.

Secondly, for the function $w=w(t,x)$, we work with $\nabla w=(\partial_t w, \nabla_x w)$ and
\begin{equation*}
\|w\|_{L^pX(I\times \R^n)}=\|w\|_{L_t^pX_x(I\times \R^n)}= \left(\int_I\|w(t,\cdot)\|_{X(\R^n)}^{p}dt\right)^{1/p},
\end{equation*}
where $X(\R^n)$ is a normed/semi-normed space (e.g., $X=L^q$ or $H^\sigma$ or $\dot{H}^\sigma$) and $I\subseteq \R$ is an arbitrary time interval. For ease of notation, when $I\times \R^n=[0,T)\times \R^4$, we simply write
\begin{equation*}
\|w\|_{L^pX}=\|w\|_{L^pX([0,T)\times \R^4)}.
\end{equation*}
This has to do with the majority of the norms we are dealing with from here on out referring to this particular situation.


\subsection{Analytic toolbox}

In here, we collect a list of analytic facts that are used throughout the argument. First, we recall the classical and general Sobolev embeddings
\begin{align}
H^\sigma(\R^n)\subset L^\infty(\R^n),\qquad \sigma>\frac{n}{2}&, \label{Sob-classic}\\
\dot{H}^{\sigma,p}(\R^n)\subset L^q(\R^n),\qquad 1<p\leq q<\infty, \quad &\sigma=n\left(\frac{1}{p}-\frac{1}{q}\right),  \label{Sob-gen}
\end{align}
and the radial Sobolev estimates (\cite{St77}, \cite{CO09})
\begin{align}
r^{n/2-\sigma}|&f(r)|\lesssim \|f\|_{\dot{H}^\sigma(\R^n)},\qquad \frac{1}{2}<\sigma<\frac{n}{2}, \label{rad-Sob-1}\\
&r^{(n-1)/2}|f(r)|\lesssim \|f\|_{H^1(\R^n)},  \label{rad-Sob-2}
\end{align}
which are valid for radial functions defined on $\R^n$. In connection to these, we write down Hardy's inequality (\cite{OK90})
\begin{equation}
\left\|\frac{g}{|x|}\right\|_{L^p(\R^n)}\lesssim \left\|\nabla_x g\right\|_{L^p(\R^n)}, \qquad 1<p<n, \label{Hardy}
\end{equation}
and the interpolation bound (\cite{MR0482275})
\begin{equation}\aligned
&\begin{cases}
\sigma_1\neq \sigma_2, \qquad 1\leq p,p_1,p_2\leq \infty, \qquad 0\leq \theta \leq 1,\\ 
\sigma=(1-\theta)\sigma_1+\theta\sigma_2,\qquad \frac{1}{p}=\frac{1-\theta}{p_1}+\frac{\theta}{p_2},
\end{cases}\\
&\qquad\|g\|_{\dot{H}^{\sigma,p}(\R^n)}\lesssim \|g\|^{1-\theta}_{\dot{H}^{\sigma_1,p_1}(\R^n)}\|g\|^\theta_{\dot{H}^{\sigma_2,p_2}(\R^n)},
\endaligned
\label{interpol-bd}
\end{equation}
both of which hold true for general functions on $\R^n$.

Next, we use the Riesz potential $D^\sigma=(-\Delta)^{\sigma/2}$ to record the fractional Leibniz estimate (\cite{GO-14}, \cite{BLi-14})
\begin{equation}
\aligned
&\sigma>0, \quad 1\leq p\leq \infty, \quad 1< p_1, p_2, q_1, q_2\leq \infty, \quad\frac{1}{p}=\frac{1}{p_1}+\frac{1}{q_1}=\frac{1}{p_2}+\frac{1}{q_2}, \\
&\,\|D^\sigma(fg)\|_{L^p(\R^n)}\lesssim \|D^\sigma f\|_{L^{p_1}(\R^n)}\|g\|_{L^{q_1}(\R^n)}+\|f\|_{L^{p_2}(\R^n)}\|D^\sigma g\|_{L^{q_2}(\R^n)}, 
\endaligned
\label{Lbnz-0}
\end{equation}
and the Kato-Ponce type inequalities (\cite{Li-16})
\begin{equation}
\aligned
0<\sigma<2, \quad 1< p, p_1, p_2 &<\infty, \quad \frac{1}{p}=\frac{1}{p_1}+\frac{1}{p_2},\\
\|D^\sigma(fg)-D^\sigma f\, g-f\,D^\sigma g\|_{L^p(\R^n)}&\lesssim \|D^{\sigma/2} f\|_{L^{p_1}(\R^n)}\|D^{\sigma/2}g\|_{L^{p_2}(\R^n)}, 
\endaligned
\label{Lbnz-1}
\end{equation}
\begin{equation}
\aligned
&0<\sigma\leq 1, \qquad 1< p <\infty,\\
\|D^\sigma(fg)-f\,&D^\sigma g\|_{L^p(\R^n)}\lesssim \|D^{\sigma} f\|_{L^{p}(\R^n)}\|g\|_{L^{\infty}(\R^n)}.
\endaligned
\label{Lbnz-2}
\end{equation}
We also call to mind the well-known Moser bound
\begin{equation}
\|F(f)\|_{H^\sigma(\R^n)}\leq \gamma(\|f\|_{L^{\infty}(\R^n)})\,\|f\|_{H^{\sigma}(\R^n)}, \qquad(\forall)\,f\in L^{\infty}\cap H^\sigma(\R^n; \R^k), 
\label{Moser}
\end{equation}
where $F\in C^\infty(\R^k;\R)$, $F(0)=0$, and $\gamma=\gamma(\sigma)\in C(\R; \R)$. Following this, we recall the Bernstein estimates
\begin{equation}
\aligned
\|P_{>\l}f\|&_{L^p(\R^n)}\lesssim \|f\|_{L^p(\R^n)},\\ \l^\sigma\|P_{>\l}f\|_{L^p(\R^n)}\lesssim& \,\|P_{>\l}D^\sigma f\|_{L^p(\R^n)}, \qquad \sigma\geq 0,
\endaligned
\label{Bernstein}
\end{equation}
where $P_{>\l}$ is a Fourier multiplier localizing the spatial frequencies to the region $\{|\xi|>\l\}$.

Finally, we recount the classical Strichartz inequalities for the $4+1$-dimensional linear wave equation, which take the form
\begin{equation}
\aligned
\|\Psi\|&_{L^pL^q(I\times \R^4)}+\|\Psi\|_{L^\infty\dot{H}^\sigma(I\times \R^4)}+\|\Psi_{t}\|_{L^\infty\dot{H}^{\sigma-1}(I\times \R^4)}\\
&\qquad\lesssim \|\Psi(0)\|_{\dot{H}^\sigma( \R^4)}+\|\Psi_{t}(0)\|_{\dot{H}^{\sigma-1}(\R^4)}+\|\Box\Psi\|_{L^{\bar{p}'}L^{\bar{q}'}(I\times \R^4)},
\endaligned
\label{Str-gen} 
\end{equation}
with $I$ being a time interval and
\[
\begin{cases}
4\leq p\leq \infty, \qquad 2\leq q< \infty, \qquad \frac{4}{p}+\frac{2}{q}\leq 1, \\
1\leq \bar{p}' \leq \frac{4}{3}, \qquad\, 1<\bar{q}' \leq 2, \qquad \frac{4}{\bar{p}'}+\frac{2}{\bar{q}'}\geq 5, \\
\frac{1}{p}+\frac{4}{q}= 2-\sigma= -2+\frac{1}{\bar{p}'}+\frac{4}{\bar{q}'}.
\end{cases}
\]
A straightforward consequence of the previous bound is the following generalized energy estimate:
\begin{equation}
\aligned
\|\Psi\|_{L^\infty\dot{H}^\sigma(I\times \R^4)}+\|\Psi_{t}\|_{L^\infty\dot{H}^{\sigma-1}(I\times \R^4)}\lesssim &\|\Psi(0)\|_{\dot{H}^\sigma( \R^4)}+\|\Psi_{t}(0)\|_{\dot{H}^{\sigma-1}(\R^4)}\\
&+\|\Box\Psi\|_{L^1\dot{H}^{\sigma-1}(I\times \R^4)}.
\endaligned
\label{en-hs} 
\end{equation}


\section{Energy-type arguments} \label{sect-en}

In this section, we truly start the argument by showing that
\begin{equation}
\|v\|_{L^\infty H^{1}}+\|v_t\|_{L^\infty L^{2}}\lesssim 1
\label{v-h1}
\end{equation}
and, subsequently,
\begin{equation}
\|\Phi\|_{L^\infty H^{1}}+\|\Phi_t\|_{L^\infty L^{2}}\lesssim 1.
\label{Phi-h1}
\end{equation}
Next, we use these bounds and the radial Sobolev inequalities \eqref{rad-Sob-1}-\eqref{rad-Sob-2} to derive preliminary fixed-time decay estimates for both $v$ and $\Phi$, which, in turn, yield valuable asymptotics for $\Phi$ and $\Box\Phi$.

\vspace{.1in}
\hrule


\subsection{Energy-type arguments for $u$} 

Let $I:\R\to\R$ be defined by
\begin{equation*}
I(w):=\int_0^w|\sin z|\,dz,
\end{equation*}
which is easily seen to be odd, strictly increasing, and satisfying
\begin{equation*}
\lim_{|w|\to\infty} |I(w)|=\infty.
\end{equation*}
If we rely on \eqref{bdry}, the fundamental theorem of calculus, the Cauchy-Schwarz inequality, and \eqref{tote}, it follows that
\begin{equation*}
\aligned
I(|u(t,r)-u(t,0)|)&=|I(u(t,r))-I(u(t,0)))|\\
&\leq \int_0^r|\sin(u(t,s))|\,|u_r(t,s)|\,ds\\ 
&\lesssim \min\bigg\{\int_0^r\frac{\sin^2(u(t,s))}{s^2}\,s\,ds \cdot\int_0^ru^2_r(t,s)\,s\,ds,\\
&\qquad\qquad\int_0^r\frac{\sin^2(u(t,s))u^2_r(t,s)}{s^2}\,s\,ds\cdot\int_0^r s\,ds\bigg\}^{1/2}\\
&\lesssim \min\left\{E, E^{1/2}r\right\}.
\endaligned
\end{equation*}
Therefore, by taking into account the properties of $I$, we first deduce
\begin{equation*}
|u(t,r)-u(t,0)|\lesssim 1,
\end{equation*}
while for $r$ sufficiently small, we can be more precise and write
\begin{equation*}
|u(t,r)-u(t,0)|<\frac{\pi}{2}.
\end{equation*}
Both of these estimates are uniform in time. Based on the definition of $I$, the latter implies
\begin{equation*}
|u(t,r)-u(t,0)|^2\sim I(|u(t,r)-u(t,0)|)\lesssim E^{1/2}r,
\end{equation*}
which leads to  
\begin{equation*}
|u(t,r)-u(t,0)|\lesssim r^{1/2}.
\end{equation*}
Thus, we can summarize these findings as
\begin{equation}
|u(t,r)-u(t,0)|\lesssim \min\left\{1, r^{1/2}\right\}.
\label{decay-u}
\end{equation}


\subsection{Energy-type arguments for $v$} 

Using the formula \eqref{utov}, we easily infer that 
\begin{equation}
\|v_t\|_{L^\infty L^2}\simeq\left\|u_t\right\|_{L^\infty L^2([0,T)\times \R^2)}\lesssim E^{1/2}\lesssim 1
\label{vt-l2}
\end{equation}
and, consequently,
\begin{equation}
\|v\|_{L^\infty L^2}\lesssim\|v(0)\|_{L^2(\R^4)}+T\,E^{1/2}  \lesssim 1.
\label{v-l2}
\end{equation}
For the radial derivative of $v$, the joint application of \eqref{utov}, \eqref{decay-u}, and \eqref{v-l2} produces
\begin{equation}
\aligned
\|v_r\|_{L^\infty L^2}&\lesssim\left\|\frac{\varphi_r}{r}\right\|_{L^2(\R^4)}+\left\|u_r\right\|_{L^\infty L^2([0,T)\times \R^2)}+\left\|\frac{u-\varphi}{r^2}\right\|_{L^\infty L^2}\\
&\lesssim 1+E^{1/2}+\left\|\frac{u(t,r)-u(t,0)}{r^2}\right\|_{L^\infty L^2([0,T)\times\{r\ll1\})}\\
&\quad+\left\|\frac{1}{r^2}\right\|_{L^\infty L^2([0,T)\times\{r\sim 1\})}+\left\|\frac{u}{r^2}\right\|_{L^\infty L^2([0,T)\times\{r\gg 1\})}\\
&\lesssim 1+E^{1/2}+\left\|\frac{1}{r^{3/2}}\right\|_{L^\infty L^2([0,T)\times\{r\ll 1\})}+\left\|\frac{v}{r}\right\|_{L^\infty L^2([0,T)\times \{r\gg 1\})}\\ 
&\lesssim 1+E^{1/2}\\ &\lesssim 1,
\endaligned
\label{vr-l2}
\end{equation}
which finishes the proof of \eqref{v-h1}.

\begin{remark}
In \cite{GGr-17},  the analysis for the Skyrme model relied on \eqref{Hardy} to derive
\[\aligned
\left\|\frac{u-\varphi}{r^2}\right\|_{L^\infty L^2([0,T)\times \R^5)}&\lesssim 1+\left\|\frac{u}{r}\right\|_{L^\infty L^2([0,T)\times \R^3)}\\&\lesssim 1+\left\|u_r\right\|_{L^\infty L^2([0,T)\times \R^3)}\\&\lesssim 1+E^{1/2}\\&\lesssim 1.
\endaligned
\]
Here, we really need the decay estimate \eqref{decay-u} and \eqref{v-l2} in order to bound 
\[
\left\|\frac{u-\varphi}{r^2}\right\|_{L^\infty L^2},
\]
as Hardy's inequality is inapplicable.
\end{remark}


\subsection{Energy-type arguments for $\Phi$} 

We take advantage of the formula \eqref{Phi-final} to deduce
\begin{equation}
\|\Phi_t\|_{L^\infty L^2}\simeq\left\|\left(1+\frac{\sin^2 u}{r^2}\right)^{1/2}u_t\right\|_{L^\infty L^2([0,T)\times \R^2)}\lesssim E^{1/2} \lesssim 1.
\label{phitlil2}
\end{equation}
Furthermore, another application of the same formula leads to 
\[\aligned
|\Phi(0)|\lesssim \int_0^{|v(0)|}(1+y)dy+\frac{|\varphi_{\geq 1/2}|}{r^3}\lesssim |v(0)|+|v(0)|^2+\frac{|\varphi_{\geq 1/2}|}{r^3},
\endaligned
\]
which, coupled with the Sobolev embeddings \eqref{Sob-gen}, yields 
\[
\|\Phi(0)\|_{L^2(\R^4)}\,\lesssim\,1+\|v(0)\|_{L^2(\R^4)}+\|v(0)\|^2_{L^4(\R^4)}\,\lesssim\,1+\|v(0)\|_{H^{1}(\R^4)}^2 \lesssim 1.
\]
As we argued for $v$, it follows that
\begin{equation}
\|\Phi\|_{L^\infty L^2}\lesssim 1+\|v(0)\|^2_{H^{1}(\R^4)}+T\,E^{1/2} \lesssim 1.
\label{philil2}
\end{equation}
Hence, in order to finish the proof of \eqref{Phi-h1}, we need to obtain a favorable bound for $\|\Phi_r\|_{L^\infty L^2}$, which is slightly more intricate. 

First, we show that the following fixed-time estimate is valid.
\begin{prop}
Under the assumptions of Theorem \ref{main-th-v-2},
\begin{equation}
\left|\partial_t\left\{\int_{\R^4}\left|\nabla \Phi\right|^2 dx\right\}\right|\lesssim 1
\label{phir}
\end{equation}
holds true uniformly in time on $[0,T)$.
\end{prop}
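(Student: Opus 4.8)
The plan is to differentiate the energy-type quantity $\int_{\R^4}|\nabla\Phi|^2\,dx = \int_{\R^4}(\Phi_t^2+|\nabla_x\Phi|^2)\,dx$ in time and integrate by parts, which produces the classical identity
\[
\partial_t\left\{\int_{\R^4}|\nabla\Phi|^2\,dx\right\}\,=\,2\int_{\R^4}\Phi_t\,\Box_{4+1}\Phi\,dx,
\]
since $\Phi$ is radial so there are no boundary terms at spatial infinity to worry about (and the $1/r$ factor coming from the $4+1$-dimensional Laplacian in radial coordinates is benign because the measure is $r^3\,dr$). Substituting the wave equation \eqref{Box-Phi} for $\Box_{4+1}\Phi$, the task reduces to showing that
\[
\left|\int_{\R^4}\Phi_t\left(\Phi-\int_0^v\{\tA^{-3/2}\}\,dy+\frac{\varphi_{\geq 1/2}}{r^3}\right)dx\right|\lesssim 1
\]
uniformly on $[0,T)$. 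I would handle the three terms separately. For the first, Cauchy--Schwarz gives $\|\Phi_t\|_{L^2}\|\Phi\|_{L^2}$, which is $\lesssim 1$ by \eqref{phitlil2} and \eqref{philil2}. The third term is bounded by $\|\Phi_t\|_{L^2}\,\|\varphi_{\geq 1/2}/r^3\|_{L^2(\R^4)}$, and since $\varphi_{\geq 1/2}$ is smooth, bounded, and supported in $\{r\geq 1/2\}$, the factor $r^{-3}$ is integrable against $r^3\,dr$ on that region with exponential-type cutoff at infinity, so this norm is a fixed constant; hence this term is also $\lesssim 1$.

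The middle term is the one requiring a little care, and it is where I expect the main (though still mild) obstacle to lie. By Cauchy--Schwarz it suffices to bound $\big\|\int_0^v\{\tA^{-3/2}\}\,dy\big\|_{L^2(\R^4)}$ uniformly in time; then the whole term is controlled by \eqref{phitlil2} times this quantity. From the formula \eqref{ta-formula}, $\tA=1+\sin^2(ry+\varphi(r))/r^2\geq 1$, so $\tA^{-3/2}\leq 1$ pointwise and therefore
\[
\left|\int_0^v\{\tA^{-3/2}\}\,dy\right|\,\leq\,|v|\qquad\text{pointwise.}
\]
Consequently $\big\|\int_0^v\{\tA^{-3/2}\}\,dy\big\|_{L^2(\R^4)}\leq\|v\|_{L^2(\R^4)}\lesssim 1$ by \eqref{v-l2} (which holds at each fixed time, uniformly on $[0,T)$). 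Combining the three bounds gives \eqref{phir}.

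One point to double-check in writing this cleanly is the justification of the integration by parts: a priori we only know $v$, hence $\Phi$, lies in $H^s$ with $s$ slightly above $3$ in four space dimensions, which is more than enough for $\nabla\Phi\in L^2$ and $\Box\Phi\in L^2$ at each fixed time (one must verify $\int_0^v\{\tA^{-3/2}\}\,dy\in L^2$, which follows from the pointwise bound by $|v|$ above, and that the source term $\varphi_{\geq 1/2}/r^3\in L^2(\R^4)$, noted already), so all the manipulations are legitimate and the resulting estimate is finite uniformly in $t\in[0,T)$. The constants depend only on $E$ and $T$ and on the fixed profile functions, consistent with the $\lesssim$ convention.
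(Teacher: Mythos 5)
Your proof is correct and follows essentially the same route as the paper: multiply \eqref{Box-Phi} by $\Phi_t$, integrate by parts to obtain $\partial_t\int_{\R^4}|\nabla\Phi|^2dx$ (up to a harmless constant), and close with Cauchy--Schwarz using \eqref{phitlil2}, \eqref{philil2}, and the $L^2$ bound on $\varphi_{\geq 1/2}/r^3$. The only cosmetic difference is that you bound the integral term by $|v|$ and invoke \eqref{v-l2}, whereas the paper absorbs it into $|\Box_{4+1}\Phi|\lesssim|\Phi|+|\varphi_{\geq 1/2}|/r^3$ and uses \eqref{philil2}; both are equally valid.
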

\begin{proof}
If we multiply the equation \eqref{Box-Phi} by $\Phi_t$ and integrate the outcome with respect to the spatial variables using Gauss's theorem, then we infer that
\begin{equation*}
\int_{\R^4}\big\{\Phi_t\,\Box_{4+1}\Phi\big\}dx=\partial_t\left\{\int_{\R^4}\left|\nabla \Phi\right|^2 dx\right\}.
\end{equation*}
Next, according to \eqref{Phi-final} and \eqref{Box-Phi}, we can write
\begin{equation}
\left|\Box_{4+1}\Phi\right|\lesssim |\Phi|+\frac{|\varphi_{\geq 1/2}|}{r^3}.
\label{boxphi-phi}
\end{equation}
Consequently, by also factoring in \eqref{phitlil2} and \eqref{philil2}, we derive
\begin{equation*}
\left|\int_{\R^4}\big\{\Phi_t\,\Box_{4+1}\Phi\big\}dx\right|\lesssim \|\Phi_t\|_{L^\infty L^2}\left(\|\Phi\|_{L^\infty L^2}+\left\|\frac{\varphi_{\geq 1/2}}{r^3}\right\|_{L^2(\R^4)}\right)\lesssim 1,
\end{equation*}
which gives the desired conclusion.
\end{proof}

On the basis of this result and \eqref{phitlil2}, it follows that 
\begin{equation}
\|\Phi_r\|_{L^\infty L^2} \lesssim 1
\label{phirlil2}
\end{equation}
is valid if we prove that
\begin{equation*}
\|\Phi_r(0)\|_{L^2(\R^4)} \lesssim 1.
\end{equation*}
For this purpose, a straightforward calculation using \eqref{Phi-final} and \eqref{ta-formula} yields
\[
\aligned
\Phi_r(0)\,=\,\tA^{1/2}(r,v(0))v_r(0)+\frac{1}{2}\int_{0}^{v(0)}\left\{\tA^{-1/2}\tA_r\right\}dy +\frac{\varphi_{\geq 1/2}}{r^3},
\endaligned
\]
with
\begin{equation}
\tA_r=\frac{-2\sin^2(ry+\varphi)}{r^3}+\frac{\sin2(ry+\varphi)\cdot(y+\varphi_r)}{r^2}.
\label{ta-r}
\end{equation}
If we rely on the properties of $\varphi$, then it is relatively easy to deduce, eventually applying Maclaurin series, that
\begin{equation}
1\leq \tA(r,v(0))\lesssim 1+v^2(0),
\end{equation}
\begin{equation}
\left|\frac{-2\sin^2(ry+\varphi)}{r^3}+\frac{\sin2(ry+\varphi)\cdot(y+\varphi_r)}{r^2}\right|\lesssim\frac{1+|y|}{r^2}, \qquad r\geq 1,
\label{ta-r-g1}
\end{equation}
\begin{equation}
\left|\frac{-2\sin^2(ry+\varphi)}{r^3}+\frac{\sin2(ry+\varphi)\cdot(y+\varphi_r)}{r^2}\right|\lesssim ry^4, \qquad r<1.
\label{ta-r-l1}
\end{equation}
Based on the last five mathematical statements and also using the Sobolev embeddings \eqref{Sob-gen}, we infer that
\begin{equation}
\aligned
&\|\Phi_r(0)\|_{L^2(\R^4)}\\
&\qquad\lesssim \left\|(1+|v(0)|)v_r(0)\right\|_{L^2(\R^4)}+\left\|\frac{v(0)}{r^2}\right\|_{L^2(\{r\geq 1\})}+\left\|\frac{v(0)}{r}\right\|^2_{L^4(\{r\geq 1\})}\\
&\qquad\quad+\left\|rv^5(0)\right\|_{L^2(\{r<1\})}+\left\|\frac{\varphi_{\geq 1/2}}{r^3}\right\|_{L^2(\R^4)}\\
&\qquad\lesssim\|v(0)\|_{\dot{H}^{1}(\R^4)}+\|v(0)\|_{L^8(\R^4)}\|v_r(0)\|_{L^{8/3}(\R^4)}+\|v(0)\|_{L^2(\R^4)}\\
&\qquad\quad+\|v(0)\|^2_{H^{1}(\R^4)}+\left\|rv^5(0)\right\|_{L^2(\{r<1\})}+1\\
&\qquad\lesssim 1+\|v(0)\|^2_{H^{3/2}(\R^4)}+\left\|rv^5(0)\right\|_{L^2(\{r<1\})}\\
&\qquad\lesssim 1+\left\|rv^5(0)\right\|_{L^2(\{r<1\})}.
\endaligned
\label{phir-l2}
\end{equation}
For the last norm, due to \eqref{utov} and \eqref{decay-u}, we obtain that
\begin{equation}
|v(t,r)|\lesssim \min\left\{\frac{1}{r}, \frac{1}{r^{1/2}}\right\}
\label{decay-v-0}
\end{equation}
and, subsequently,
\[
\left\|rv^5(0)\right\|_{L^2(\{r<1\})}\lesssim \left\|\frac{1}{r^{3/2}}\right\|_{L^2(\{r<1\})}\lesssim 1.
\]
This finishes the argument for \eqref{phirlil2} and, consequently, \eqref{Phi-h1}.

\begin{remark}
By comparison to the corresponding analysis in \cite{GGr-17}, the argument here is much more streamlined, mainly due to \eqref{boxphi-phi}. One can interpret this fact as the equivariant Faddeev equation \eqref{Box-Phi} behaving better than its Skyrme counterpart.
\end{remark}


\subsection{Preliminary decay estimates and asymptotics}

First, we work with \eqref{v-h1} and \eqref{Phi-h1} to derive fixed-time decay estimates for both $\Phi$ and $v$. 
\begin{prop}
Under the assumptions of Theorem \ref{main-th-v-2}, we have
\begin{align}
|\Phi(t,r)|&\lesssim \min\left\{\frac{1}{r^{3/2}}, \frac{1}{r}\right\},\label{decay-Phi}\\
|v(t,r)|&\lesssim \min\left\{\frac{1}{r^{3/2}}, \frac{1}{r^{1/2}}\right\}.\label{decay-v}
\end{align}
\end{prop}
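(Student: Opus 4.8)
The plan is to obtain each of the two min-type estimates by establishing its two constituent bounds separately and globally in $r$, and then taking their minimum; since $\min\{A,B\}$ is dominated by each of $A$ and $B$, it suffices to prove $|\Phi(t,r)|\lesssim r^{-3/2}$ together with $|\Phi(t,r)|\lesssim r^{-1}$, and likewise $|v(t,r)|\lesssim r^{-3/2}$ together with $|v(t,r)|\lesssim r^{-1/2}$, all uniformly in $t\in[0,T)$. The workhorses are the radial Sobolev inequalities \eqref{rad-Sob-1} and \eqref{rad-Sob-2} on $\R^4$, fed by the $H^1$-level energy bounds \eqref{v-h1} and \eqref{Phi-h1} proved earlier in this section. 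Uniformity in time is automatic, since those are $L^\infty_t$ bounds while the radial Sobolev estimates are applied at each fixed time.

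For $\Phi$, both halves come directly from radial Sobolev applied to \eqref{Phi-h1}. With $n=4$, \eqref{rad-Sob-2} reads $r^{3/2}|\Phi(t,r)|\lesssim \|\Phi(t)\|_{H^1}\lesssim 1$, which supplies the strong large-$r$ decay $|\Phi|\lesssim r^{-3/2}$. For the milder near-origin bound I would invoke \eqref{rad-Sob-1} with $\sigma=1$, admissible since $1/2<1<2=n/2$; it yields $r\,|\Phi(t,r)|\lesssim \|\Phi(t)\|_{\dot{H}^1}\lesssim \|\Phi(t)\|_{H^1}\lesssim 1$, that is, $|\Phi|\lesssim r^{-1}$. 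Taking the minimum of the two produces \eqref{decay-Phi}.

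For $v$, the large-$r$ decay is handled in exactly the same way: \eqref{rad-Sob-2} and \eqref{v-h1} give $r^{3/2}|v(t,r)|\lesssim \|v(t)\|_{H^1}\lesssim 1$, hence $|v|\lesssim r^{-3/2}$. The near-origin bound $|v|\lesssim r^{-1/2}$, by contrast, is not accessible through radial Sobolev: to extract $r^{-1/2}$ from \eqref{rad-Sob-1} one would need $\sigma=3/2$, and thus $\dot{H}^{3/2}$-control on $v$, which \eqref{v-h1} does not furnish. Instead this bound is already in hand as the decay estimate \eqref{decay-v-0}, itself a consequence of the substitution \eqref{utov} and the energy-based estimate \eqref{decay-u}; indeed $\min\{r^{-1},r^{-1/2}\}\le r^{-1/2}$ for every $r$. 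Combining $|v|\lesssim r^{-3/2}$ with $|v|\lesssim r^{-1/2}$ and taking the minimum yields \eqref{decay-v}.

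The argument is short, and the only genuine subtlety — which I would flag explicitly — is the asymmetry between the two functions: $\Phi$ is controlled at both scales purely by its $H^1$ energy via radial Sobolev, whereas the sharp small-$r$ behavior of $v$ must be imported from the earlier energy-type argument for $u$ rather than from spatial regularity, precisely because we possess only $H^1$ (and not $\dot{H}^{3/2}$) control on $v$.
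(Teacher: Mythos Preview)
Your proof is correct and follows essentially the same approach as the paper: both bounds for $\Phi$ and the $r^{-3/2}$ bound for $v$ come from the radial Sobolev inequalities \eqref{rad-Sob-1}--\eqref{rad-Sob-2} together with the $H^1$ energy estimates \eqref{v-h1} and \eqref{Phi-h1}, while the $r^{-1/2}$ bound for $v$ is imported from \eqref{decay-v-0}. Your explicit remark that $r^{-1/2}$ is inaccessible from $H^1$ via radial Sobolev (it would require $\dot{H}^{3/2}$) nicely clarifies why the earlier energy argument for $u$ is needed at that step.
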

\begin{proof}
By virtue of the radial Sobolev inequalities \eqref{rad-Sob-1} and \eqref{rad-Sob-2}, we deduce \begin{equation*}
\aligned
|\Phi(t,r)|&\lesssim \min\left\{\frac{1}{r^{3/2}}, \frac{1}{r}\right\}\|\Phi\|_{L^\infty H^1},\\
|v(t,r)|&\lesssim \min\left\{\frac{1}{r^{3/2}}, \frac{1}{r}\right\}\|v\|_{L^\infty H^1}.
\endaligned
\end{equation*}
Thus, due to \eqref{v-h1} and \eqref{Phi-h1}, we claim \eqref{decay-Phi} and half of \eqref{decay-v}. The other half of \eqref{decay-v} follows as a consequence of \eqref{decay-v-0}.
\end{proof}

Next, we use these bounds to infer asymptotics for $\Phi$ and $\Box\Phi$ in terms of $v$. These are critical in further arguments.
\begin{prop}
Under the assumptions of Theorem \ref{main-th-v-2}, we have
\begin{equation}
|\Phi|\sim |v|+v^2 \quad \text{and} \quad|\Box\Phi|\sim \min\{v^2, |v|^3\}\qquad\text{if}\quad r\ll 1
\label{rll1}
\end{equation}
and
\begin{equation}
\left|\Phi-\frac{\varphi_{\geq 1/2}}{r^3}\right|\sim |v| \quad \text{and} \quad\left|\Box\Phi-\frac{\varphi_{\geq 1/2}}{r^3}\right|\lesssim \frac{|v|}{r^2}\qquad\text{if}\quad r\gtrsim 1.
\label{rgrt1}
\end{equation}
\end{prop}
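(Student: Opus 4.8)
The plan is to establish the four asymptotic relations in \eqref{rll1}--\eqref{rgrt1} by directly analyzing the defining formula \eqref{Phi-final} and the wave equation \eqref{Box-Phi}, splitting the analysis according to the size of $r$ and exploiting the decay estimate \eqref{decay-v}. Throughout, the key elementary input is the two-sided bound on the integrand: since $1\leq \tA(r,y)\leq 1+\sin^2(ry+\varphi)/r^2$, we have $\tA^{1/2}\sim (1+|\sin(ry+\varphi)|/r)$ and, for the second integrand, $1-\tA^{-3/2}\sim \min\{1,\sin^2(ry+\varphi)/r^2\}$; these comparisons, combined with $|\sin(ry+\varphi)|\lesssim \min\{1,r|y|\}$ (using that $\varphi$ is constant mod $\pi$ near $0$), are what drive everything.

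First I would treat the regime $r\ll 1$. Here $\varphi\equiv N_1\pi$, so $\sin(ry+\varphi)=\sin(ry)$ and thus $|\sin(ry)|\sim \min\{1,r|y|\}$ for $|y|$ in the relevant range. Plugging this into \eqref{Phi-final}, the term $\varphi_{\geq 1/2}/r^3$ vanishes, and one computes $\Phi=\int_0^v \tA^{1/2}(r,y)\,dy$ with $\tA^{1/2}\sim 1$ when $r|y|\lesssim 1$ and $\tA^{1/2}\sim |y|$ when $r|y|\gtrsim 1$; integrating in $y$ gives $|\Phi|\sim |v|+v^2$ once one checks both subcases against \eqref{decay-v}, which guarantees $r|v|\lesssim r^{1/2}$, i.e.\ the "small" branch is always active near $r=0$ in a way that makes the $v^2$ correction genuinely of lower order but present. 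For $\Box\Phi$, the right-hand side of \eqref{Box-Phi} is $\Phi-\int_0^v(1-\tA^{-3/2})\,dy$; using $\Phi\sim |v|+v^2$ and $1-\tA^{-3/2}\sim\min\{1,\sin^2(ry)/r^2\}$ one integrates to see the integral is $\sim\min\{|v|,r^2|v|^3\}$-ish, and the cancellation structure forces $|\Box\Phi|\sim\min\{v^2,|v|^3\}$; verifying that the two contributions do not cancel (so that $\sim$, not just $\lesssim$, holds) is the delicate point and is where I expect to lean hardest on the explicit sign of the integrand and on \eqref{decay-v}.

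Next I would handle $r\gtrsim 1$. Now $\sin^2(ry+\varphi)/r^2\lesssim 1/r^2\lesssim 1$, so $\tA=1+O(1/r^2)$ uniformly, hence $\tA^{1/2}=1+O(1/r^2)$ and $\int_0^v\tA^{1/2}\,dy=v+O(|v|/r^2)$, which gives $|\Phi-\varphi_{\geq 1/2}/r^3|\sim|v|$ provided the error $O(|v|/r^2)$ is indeed dominated by $|v|$, i.e.\ provided $r\gtrsim 1$ — immediate. Similarly $1-\tA^{-3/2}=O(\sin^2(ry+\varphi)/r^2)=O(1/r^2)$, so $\int_0^v(1-\tA^{-3/2})\,dy=O(|v|/r^2)$, and then $\Box\Phi-\varphi_{\geq 1/2}/r^3=\Phi-\varphi_{\geq 1/2}/r^3-\int_0^v(1-\tA^{-3/2})\,dy$ together with the just-proven $|\Phi-\varphi_{\geq 1/2}/r^3|\sim|v|$ would naively only give $\lesssim |v|$, not $\lesssim |v|/r^2$; the point is that the two copies of $\Phi$ differ from $v$ by terms that cancel, so one should instead write $\Box\Phi-\varphi_{\geq 1/2}/r^3=\int_0^v(\tA^{1/2}-\tA^{-3/2})\,dy=\int_0^v\tA^{-3/2}(\tA^2-1)\,dy$, exhibit $\tA^2-1=O(1/r^2)$ directly, and conclude $|\Box\Phi-\varphi_{\geq 1/2}/r^3|\lesssim |v|/r^2$.

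The main obstacle, I expect, is not the $r\gtrsim 1$ estimates, which are straightforward Taylor expansions in $1/r$, but rather pinning down the \emph{two-sided} bounds (the $\sim$ signs) in the $r\ll 1$ regime for $\Box\Phi$: one must rule out accidental cancellation between $\Phi$ and the integral term on the right of \eqref{Box-Phi}, and the natural way to do this is to recombine them as $\int_0^v(\tA^{1/2}-\tA^{-3/2})\,dy=\int_0^v\tA^{-3/2}(\tA^2-1)\,dy$ with $\tA^2-1=\bigl(2+\sin^2(ry)/r^2\bigr)\sin^2(ry)/r^2$ a manifestly nonnegative, non-cancelling quantity, then integrate the resulting one-signed expression using $\sin^2(ry)/r^2\sim\min\{1/r^2,y^2\}$ and the bound $r|v|\lesssim r^{1/2}$ from \eqref{decay-v} to land exactly on $\min\{v^2,|v|^3\}$ (splitting the $y$-integral at $|y|\sim 1/r$ as needed). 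Once this recombination is in place, both displays \eqref{rll1} and \eqref{rgrt1} for $\Box\Phi$ follow from the same identity, and the statements for $\Phi$ itself follow from the simpler direct analysis of \eqref{Phi-final} described above.
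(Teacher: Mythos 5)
Your argument is correct and essentially reproduces the paper's own proof: both rewrite $\Phi-\varphi_{\geq 1/2}/r^3$ and $\Box\Phi-\varphi_{\geq 1/2}/r^3$ as single integrals with one-signed integrands (your $\tA^{-3/2}(\tA^2-1)$ is identical to the paper's $(\tA^{-1/2}+\tA^{-3/2})(\tA-1)$), use \eqref{decay-v} to guarantee $r|v|\ll 1$ so that $\sin^2(ry)/r^2\sim y^2$ on the whole integration range when $r\ll 1$, and use $\tA\sim 1$, $\tA-1\lesssim 1/r^2$ when $r\gtrsim 1$. The only blemishes are transcription slips in your middle paragraph (the dichotomy ``$\tA^{1/2}\sim 1$ for $r|y|\lesssim 1$'' and the stray ``$1-\tA^{-3/2}$''), which your final recombination supersedes: there the two-sided bound $\tA^{1/2}\sim 1+|y|$ gives $|v|+v^2$, and the integrand $\sim y^2/(1+y)$ gives $\min\{v^2,|v|^3\}$, exactly as in the paper.
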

\begin{proof}
We start by rewriting \eqref{Phi-final} and \eqref{Box-Phi} in the forms
\[
\aligned
\Phi-\frac{\varphi_{\geq 1/2}}{r^3}&=\int_0^v \tilde{A}^{1/2}\,dy,\\
\Box\Phi-\frac{\varphi_{\geq 1/2}}{r^3}&=\int_0^v \left(\tilde{A}^{-1/2}+\tilde{A}^{-3/2}\right)(\tilde{A}-1)\,dy.
\endaligned
\]
If we choose $r<1/2$, then
\[
\tilde{A}=1+\frac{\sin^2(ry)}{r^2}, \qquad \varphi_{\geq 1/2}(r)=0. 
\]
Moreover, by applying \eqref{decay-v}, we can guarantee that $r|v|\leq 1$ if we further calibrate $r$ to be sufficiently small, . Therefore, it follows that
\[
|\Phi|=\int_0^{|v|}\left(1+\frac{\sin^2(ry)}{r^2}\right)^{1/2}dy\sim \int_0^{|v|}\left(1+y\right)dy\sim |v|+v^2
\] 
and
\[
\aligned
|\Box\Phi|&=\int_0^{|v|}\left\{\left(1+\frac{\sin^2(ry)}{r^2}\right)^{-1/2}+\left(1+\frac{\sin^2(ry)}{r^2}\right)^{-3/2}\right\}\frac{\sin^2(ry)}{r^2}\,dy\\
&\sim \int_0^{|v|}\frac{y^2}{1+y}\,dy\sim  \min\{v^2, |v|^3\},
\endaligned
\]
which proves \eqref{rll1}.

If $r\gtrsim 1$, one has
\[
1\leq \tilde{A}\leq 1+\frac{2}{r^2}\sim 1
\] 
and the derivation of \eqref{rgrt1} follows along the same lines. \end{proof}


\section{$H^2$-type analysis} 

In this section, our goal is to improve upon \eqref{Phi-h1} and show that
\begin{equation}
\|\Phi\|_{L^\infty \dot{H}^2}+\|\Phi_t\|_{L^\infty \dot{H}^1}+\|\Phi_{tt}\|_{L^\infty L^2}\lesssim 1.
\label{H2-norm}
\end{equation}
First, we write a wave equation for $\Phi_t$, which is then investigated by applying Strichartz estimates. As a consequence, we deduce that both $\Phi_t$ and $\Phi_{tt}$ have the desired Sobolev regularity. Combining this information with the main equation satisfied by $\Phi$ (i.e., \eqref{Box-Phi}), we derive that $\Phi\in L^\infty \dot{H}^2$. Following this, the fixed-time decay estimates \eqref{decay-Phi} and \eqref{decay-v} are upgraded.

\vspace{.1in}
\hrule


\subsection{Argument for the $\dot{H}^1$ and $L^2$ regularities of  $\Phi_{t}$ and $\Phi_{tt}$}

We commence by differentiating with respect to $t$ the equations \eqref{Phi-final} and \eqref{Box-Phi} and thus obtain
\begin{equation}
\Phi_t=\tA^{1/2}(r,v)v_t=\left(1+\frac{\sin^2u}{r^2}\right)^{1/2}v_t
\label{Phit-final}
\end{equation}
and
\begin{equation}
\aligned
\Box_{4+1}\Phi_t=\Phi_t-\tA^{-3/2}(r,v)\,v_t
= \left(\tA(r,v)-1\right)\left(\tA^{-1}(r,v)+\tA^{-2}(r,v)\right)\Phi_t.
\endaligned
\label{Box-Phit}
\end{equation}

In order to move forward, it is clear that we need more qualitative information on $\tA(r,v)$ and, for this purpose, we use \eqref{decay-v} to easily infer
\begin{equation}
|\sin u|\lesssim \min\left\{\frac{1}{r^{1/2}}, r^{1/2}\right\}.
\label{sinu-r}
\end{equation}
This estimate implies
\begin{equation}
\tA(r,v)-1=|\tA(r,v)-1|\lesssim \min\left\{\frac{1}{r^3}, \frac{1}{r}\right\}
\label{decay-A}
\end{equation}
and, subsequently, 
\begin{equation}
0<\tA^{-1}(r,v)+\tA^{-2}(r,v)\sim \tA^{-1}(r,v)\lesssim 1.
\label{ta-2}
\end{equation}
Now, we can proceed to prove that $\Phi_t$ and $\Phi_{tt}$ have $\dot{H}^1$ and $L^2$ regularities, respectively.

\begin{prop}
Under the assumptions of Theorem \ref{main-th-v-2},
\begin{equation}
\|\Phi_t\|_{L^p L^q}+\|\Phi_{t}\|_{L^\infty \dot{H}^{1}}+\|\Phi_{tt}\|_{L^\infty L^2}\lesssim 1
\label{Phit-h1}
\end{equation}
holds true for all pairs $(p,q)$ satisfying 
\begin{equation}
4\leq p\leq \infty, \qquad 2\leq q<\infty, \qquad \frac{4}{p}+\frac{2}{q}\leq 1, \qquad \frac{1}{p}+\frac{4}{q}=1.
\label{pq-h2}
\end{equation}
\label{prop-Phit-h1}
\end{prop}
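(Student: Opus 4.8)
The plan is to apply the Strichartz estimates \eqref{Str-gen} to the wave equation \eqref{Box-Phit} for $\Phi_t$, using the already-established $H^1$ control on $\Phi$ (and hence on $\Phi_t$ at time zero and on the source term) to close everything. Concretely, I would fix $\sigma = 1$ in \eqref{Str-gen}, so the inequality controls $\|\Phi_t\|_{L^pL^q}$ for all admissible $(p,q)$ with $\frac1p+\frac4q = 1$, together with $\|\Phi_t\|_{L^\infty\dot H^1}$ and $\|\Phi_{tt}\|_{L^\infty L^2}$, in terms of $\|\Phi_t(0)\|_{\dot H^1(\R^4)} + \|\Phi_{tt}(0)\|_{L^2(\R^4)}$ and $\|\Box\Phi_t\|_{L^{\bar p'}L^{\bar q'}}$ for a dual-admissible pair. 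So there are two things to verify: (i) the initial data $(\Phi_t(0),\Phi_{tt}(0))$ lies in $\dot H^1\times L^2(\R^4)$, and (ii) the forcing term $\Box_{4+1}\Phi_t$ can be placed in a suitable $L^{\bar p'}L^{\bar q'}$ space with a bound that is either absolute or, better, absorbable into the left-hand side.

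For (i), I would invoke the appendix (as the paper promises) together with formulas \eqref{Phit-final} and \eqref{Box-Phit}: $\Phi_t(0) = \tA^{1/2}(r,v(0))v_t(0)$, and since $\tA(r,v(0))$ and its first derivatives are controlled by powers of $v(0)$ via \eqref{decay-A}, the Moser bound \eqref{Moser}, fractional Leibniz \eqref{Lbnz-0}, and the hypothesis $(v_0,v_1)\in H^s\times H^{s-1}$ with $s>3$ give $\Phi_t(0)\in \dot H^1$; similarly $\Phi_{tt}(0) = \tA^{1/2}v_{tt}(0) + (\text{lower order})$, where $v_{tt}(0)$ is read off from the equation \eqref{main-v} and lies in $H^{s-2}\subset L^2$. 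For (ii), the key structural point is \eqref{Box-Phit}: $\Box_{4+1}\Phi_t = (\tA-1)(\tA^{-1}+\tA^{-2})\Phi_t$, and the scalar coefficient $c(t,r) := (\tA-1)(\tA^{-1}+\tA^{-2})$ is bounded pointwise by $\min\{r^{-3}, r^{-1}\}$ by \eqref{decay-A}–\eqref{ta-2}. Thus $|\Box\Phi_t| \lesssim \min\{r^{-3},r^{-1}\}|\Phi_t|$, a genuine gain, and I would estimate $\|\Box\Phi_t\|_{L^{\bar p'}L^{\bar q'}}$ by Hölder in space, pairing $\|\min\{r^{-3},r^{-1}\}\|_{L^{a}(\R^4)}$ (finite for an appropriate range of $a$, since $r^{-1}$ is locally $L^4_-$ and $r^{-3}$ is $L^{4/3}_+$ at infinity on $\R^4$) against $\|\Phi_t\|_{L^q}$ in one of the Strichartz exponents, then taking $L^{\bar p'}_t$ via Hölder in time against the $L^p_t$ Strichartz norm of $\Phi_t$ on the finite interval $[0,T)$.

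The main obstacle — and the step deserving the most care — is the bootstrap/absorption in (ii): one wants $\|\Box\Phi_t\|_{L^{\bar p'}L^{\bar q'}}$ to be controlled by a small constant times a Strichartz norm of $\Phi_t$ appearing on the left, or else by an absolute constant. Since the coefficient $c(t,r)$ is not small, pure absorption fails, so instead I would run the estimate on short time subintervals $[kT_0,(k+1)T_0)$: on each such interval the $L^{\bar p'}_t$ Hölder factor contributes a power $T_0^{\theta}$ with $\theta>0$ (because $\bar p' < p$ strictly, as $\frac{4}{\bar p'} + \frac{2}{\bar q'} \ge 5$ forces $\bar p'$ near $1$ while $p\ge 4$), making the forcing contribution small for $T_0$ small; this lets Strichartz close on each subinterval with a constant independent of $k$, and summing over the $O(T/T_0)$ subintervals gives \eqref{Phit-h1} with the stated $T$-dependence hidden in $\lesssim$. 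One technical subtlety to watch: the spatial weight $\min\{r^{-3},r^{-1}\}$ must be split at $r\sim 1$ and estimated in two different Lebesgue exponents, choosing at each piece a Strichartz exponent $q$ for $\Phi_t$ that makes the Hölder triple $\frac{1}{\bar q'} = \frac1a + \frac1q$ admissible with $q$ in the allowed range $2\le q<\infty$; a convenient single choice is to use the energy exponent $q=2$ for $\Phi_t$ (controlled since $\Phi_t\in L^\infty\dot H^1\hookrightarrow L^\infty L^4$, not $L^2$ — so more precisely pick $q$ slightly above $2$ and use $\dot H^1\hookrightarrow L^q$), leaving the weight in $L^a$ with $a$ finite on each piece. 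Once this is set up the verification is routine, and \eqref{Phit-h1} follows.
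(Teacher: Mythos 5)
Your proposal is correct and is essentially the paper's own proof: apply \eqref{Str-gen} with $\sigma=1$ and dual pair $(\bar p',\bar q')=(1,2)$ to \eqref{Box-Phit}, bound the source by H\"older using $\tA(r,v)-1\lesssim\min\{r^{-3},r^{-1}\}$ placed in a spatial Lebesgue space against a Strichartz norm of $\Phi_t$ (the paper uses the single exponent $L^{7/2}(\R^4)$ for the weight against $\|\Phi_t\|_{L^7L^{14/3}}$, so no splitting at $r\sim1$ is even needed), gain $|I|^{6/7}$ from H\"older in time, absorb on short intervals, iterate finitely many times over $[0,T)$, and invoke the Appendix for $\|\Phi_t(0)\|_{\dot H^1}+\|\Phi_{tt}(0)\|_{L^2}\lesssim1$. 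The only flaw is the parenthetical suggestion to take $q$ near $2$ via $\dot H^1(\R^4)\hookrightarrow L^q$ — that embedding only reaches $L^4$, and for $q\leq 4$ the weight would have to lie in $L^a(\R^4)$ with $a\geq4$, which fails near $r=0$ — but since your main argument explicitly allows choosing any admissible Strichartz exponent $q>4$ (as in the paper's $(7,14/3)$), this aside does not damage the proof.
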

\begin{proof}
By applying the Strichartz estimates \eqref{Str-gen} to the wave equation \eqref{Box-Phit} for the case when $\sigma=1$ and $(\bar{p}',\bar{q}')=(1,2)$, it follows that
\begin{equation}
\aligned
\|\Phi_t\|&_{L^pL^q(I\times \R^4)}+\|\Phi_t\|_{L^\infty\dot{H}^1(I\times \R^4)}+\|\Phi_{tt}\|_{L^\infty L^2(I\times \R^4)}\\
&\qquad\lesssim \|\Phi_t(a)\|_{\dot{H}^1( \R^4)}+\|\Phi_{tt}(a)\|_{L^2(\R^4)}+\|\Box\Phi_t\|_{L^1L^2(I\times \R^4)}
\endaligned
\label{Phit-h1-str}
\end{equation}
is valid for all intervals $I=[a,b]\subset [0,T)$ and pairs $(p,q)$ satisfying \eqref{pq-h2}. One such pair is $(p,q)=(7,14/3)$. Next, we use \eqref{decay-A} and \eqref{ta-2} to estimate the last term on the right-hand side as
\begin{equation}
\aligned
\|\Box\Phi_t\|_{L^1L^2(I\times \R^4)}&\lesssim  \|\tA(r,v)-1\|_{L^{7/6}L^{7/2}(I\times \R^4)}\|\Phi_t\|_{L^7L^{14/3}(I\times \R^4)}\\
&\lesssim  |I|^{6/7}\|\Phi_t\|_{L^7L^{14/3}(I\times \R^4)}.
\endaligned
\label{Box-Phit-l1l2}
\end{equation}
Consequently, we infer that  
\begin{equation*}
\aligned
\|\Phi_t\|&_{L^7L^{14/3}(I\times \R^4)}+\|\Phi_t\|_{L^\infty\dot{H}^1(I\times \R^4)}+\|\Phi_{tt}\|_{L^\infty L^2(I\times \R^4)}\\
&\qquad\lesssim \|\Phi_t(a)\|_{\dot{H}^1( \R^4)}+\|\Phi_{tt}(a)\|_{L^2(\R^4)}+|I|^{6/7}\|\Phi_t\|_{L^7L^{14/3}(I\times \R^4)}.
\endaligned
\end{equation*}

If we recall our notational conventions, then, for $|I|\sim 1$, yet sufficiently small, we deduce
\[
M(I)\lesssim \|\Phi_t(a)\|_{\dot{H}^{1}( \R^4)}+\|\Phi_{tt}(a)\|_{L^2(\R^4)},
\]
where
\[
M(I):=\|\Phi_t\|_{L^7L^{14/3}(I\times \R^4)}+\|\Phi_t\|_{L^\infty\dot{H}^{1}(I\times \R^4)}+\|\Phi_{tt}\|_{L^\infty L^2(I \times \R^4)}.
\]
Hence, by choosing $T_1$ to be the maximal length of an interval for which the previous bound holds true, we derive that
\[
\aligned
M([(k+1)T_1, (k+2)T_1])&\lesssim \|\Phi_t((k+1)T_1)\|_{\dot{H}^{1}( \R^4)}+\|\Phi_{tt}((k+1)T_1)\|_{L^2(\R^4)}\\&\lesssim M([kT_1, (k+1)T_1])
\endaligned
\]
holds true for as long as 
\[
0\leq kT_1<(k+2)T_1<T,
\] 
with $k$ being a nonnegative integer. Due to the hypothesis of Theorem \ref{main-th-v-2}, the results of the Appendix yield
\[
M([0,T_1])\lesssim \|\Phi_t(0)\|_{\dot{H}^{1}( \R^4)}+\|\Phi_{tt}(0)\|_{L^2(\R^4)}\lesssim 1,
\]
which, jointly with the previous facts, implies
\[\|\Phi_t\|_{L^7L^{14/3}}+\|\Phi_t\|_{L^\infty\dot{H}^1}+\|\Phi_{tt}\|_{L^\infty L^2}\lesssim 1.
\]

If we return now to \eqref{Box-Phit-l1l2}, then we obtain 
\[
\|\Box\Phi_t\|_{L^1L^2}\lesssim 1.
\] 
Coupled to \eqref{Phit-h1-str}, this estimate forces that
\[
\|\Phi_t\|_{L^pL^q}\lesssim 1
\]
also holds true for all pairs $(p,q)\neq (7,14/3)$ satisfying \eqref{pq-h2} and, thus, concludes the argument.
\end{proof}

\begin{remark}
With obvious modifications determined by the different numerology, the above proposition and its proof match exactly the corresponding result in \cite{GGr-17}. There, we worked with the specific pair $(p,q)=(2,5)$. 
\end{remark}


\subsection{$\dot{H}^2$ regularity for $\Phi$ and improved decay estimates}

Based on the previous proposition, we can now finish the argument for \eqref{H2-norm} by showing that $\Phi$ has $\dot{H}^2$ regularity. 

\begin{prop}
Under the assumptions of Theorem \ref{main-th-v-2}, it is true that
\begin{equation}
\|\Phi\|_{L^\infty\dot{H}^2}\lesssim 1. 
\label{Phi-lih2}
\end{equation}
\end{prop}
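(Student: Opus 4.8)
The plan is to extract $\|\Phi\|_{L^\infty\dot H^2}$ directly from the main wave equation \eqref{Box-Phi} using elliptic regularity in the spatial variables, with the $\Phi_{tt}$ bound supplied by Proposition \ref{prop-Phit-h1}. Concretely, rewrite \eqref{Box-Phi} as
\[
-\Delta_4\Phi = \Phi_{tt}-\Phi+\int_0^v\tilde A^{-3/2}\,dy-\frac{\varphi_{\geq 1/2}}{r^3},
\]
so that $\|\Phi\|_{L^\infty\dot H^2}\lesssim \|\Delta_4\Phi\|_{L^\infty L^2}$, and it suffices to bound each term on the right in $L^\infty L^2$ uniformly on $[0,T)$. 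The term $\|\Phi_{tt}\|_{L^\infty L^2}\lesssim 1$ is exactly \eqref{Phit-h1}, and $\|\Phi\|_{L^\infty L^2}\lesssim 1$ is \eqref{philil2}; the term $\varphi_{\geq 1/2}/r^3$ is manifestly a fixed $L^2(\R^4)$ function. So the whole matter reduces to estimating
\[
\left\|\int_0^v\tilde A^{-3/2}(r,y)\,dy\right\|_{L^\infty L^2}.
\]

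To handle this last quantity I would split into the regimes $r\lesssim 1$ and $r\gtrsim 1$ using the cutoffs $\varphi_{<1},\varphi_{>1}$. For $r\gtrsim 1$ one has $1\le \tilde A\lesssim 1$ by \eqref{decay-A}, hence $|\tilde A^{-3/2}|\lesssim 1$ and the integral is $\lesssim |v|$; then \eqref{decay-v} gives $|v|\lesssim r^{-3/2}$ for large $r$ while $|v|\lesssim r^{-1}$ for $r\sim 1$, and in both cases $\||v|\|_{L^2(\{r\gtrsim 1\})}\lesssim \|v\|_{L^\infty L^2}\lesssim 1$ by \eqref{v-l2} — actually even the crude bound $\|v\|_{L^\infty L^2}\lesssim 1$ suffices here. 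For $r\lesssim 1$, since $|\tilde A^{-3/2}|\le 1$ the integral is again $\lesssim |v|$, and on $\{r\ll 1\}$ the refined decay \eqref{decay-v} yields $|v|\lesssim r^{-3/2}$, which is $L^2(\{r<1\},r^3\,dr)$; more simply, $\|v\|_{L^\infty L^2}\lesssim 1$ closes this region too. Thus $\left\|\int_0^v\tilde A^{-3/2}\,dy\right\|_{L^\infty L^2}\lesssim 1$, and collecting the four pieces gives \eqref{Phi-lih2}.

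Actually a cleaner route avoids even decomposing the integral: from \eqref{boxphi-phi} we already have the pointwise bound $|\Box_{4+1}\Phi|\lesssim |\Phi|+|\varphi_{\geq 1/2}|/r^3$, so $|\Delta_4\Phi|\le |\Phi_{tt}|+|\Box_{4+1}\Phi|\lesssim |\Phi_{tt}|+|\Phi|+|\varphi_{\geq 1/2}|/r^3$, and then
\[
\|\Phi\|_{L^\infty\dot H^2}\lesssim \|\Delta_4\Phi\|_{L^\infty L^2}\lesssim \|\Phi_{tt}\|_{L^\infty L^2}+\|\Phi\|_{L^\infty L^2}+\left\|\frac{\varphi_{\geq 1/2}}{r^3}\right\|_{L^2(\R^4)}\lesssim 1,
\]
invoking \eqref{Phit-h1}, \eqref{philil2}, and the obvious $L^2(\R^4)$ membership of $\varphi_{\geq 1/2}/r^3$. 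This is the approach I would present, since it reuses the favorable structure already noted in the remark following \eqref{phir-l2}.

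I do not expect a genuine obstacle here: the only point requiring a word of care is the passage $\|\Phi\|_{\dot H^2}\lesssim \|\Delta_4\Phi\|_{L^2}$, which is just the Plancherel identity $\|\Phi\|_{\dot H^2(\R^4)}=\|\Delta_4\Phi\|_{L^2(\R^4)}$ valid for the Schwartz-class (or $H^2$) functions in question — one should note that $\Phi(t)$ has enough a priori regularity from the local existence theory and \eqref{Phit-h1} to justify this, and that the identity is an equality of homogeneous norms so no lower-order terms intrude. Everything else is a direct assembly of estimates already proved in Sections \ref{sect-en} and the current section.
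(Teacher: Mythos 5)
Your proposal is correct, and your ``cleaner route'' is in fact slightly more elementary than the paper's own argument, though both share the same skeleton: write $\|\Phi\|_{L^\infty\dot H^2}\sim\|\Delta\Phi\|_{L^\infty L^2}\leq\|\Phi_{tt}\|_{L^\infty L^2}+\|\Box\Phi\|_{L^\infty L^2}$, use Proposition \ref{prop-Phit-h1} for the first term, and then control $\|\Box\Phi\|_{L^\infty L^2}$. Where you differ is in that last step: the paper splits $\Box\Phi$ with the cutoffs $\varphi_{<1},\varphi_{>1}$ and invokes the asymptotics \eqref{rll1}--\eqref{rgrt1} together with \eqref{decay-v}, the Sobolev embedding $H^1\subset L^4$, and \eqref{Phi-h1}, estimating $\|\varphi_{<1}\Box\Phi\|_{L^\infty L^2}\lesssim\|\Phi\|_{L^\infty L^4}^2$ and $\|\varphi_{>1}\Box\Phi\|_{L^\infty L^2}\lesssim 1$; you instead reuse the pointwise bound \eqref{boxphi-phi} (equivalently, $\tA\geq 1$ gives $|\int_0^v\tA^{-3/2}dy|\leq|v|$), so that $\|\Box\Phi\|_{L^\infty L^2}\lesssim\|\Phi\|_{L^\infty L^2}+\|\varphi_{\geq 1/2}/r^3\|_{L^2(\R^4)}\lesssim 1$ by \eqref{philil2}, with no region decomposition, no decay estimates, and no Sobolev embedding. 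Since \eqref{boxphi-phi} is already established in Section \ref{sect-en}, your shortcut is legitimate and arguably cleaner; what the paper's route buys is that the same asymptotics-based treatment of $\Box\Phi$ recurs later (e.g.\ in \eqref{box-phi-l2}), so it is exercised here as well. Two harmless slips to fix if you write this up: your display for $-\Delta_4\Phi$ has the overall sign reversed (irrelevant for the norms), and in the first route you quote \eqref{decay-v} as giving $|v|\lesssim r^{-1}$ for $r\sim1$ and $|v|\lesssim r^{-3/2}$ for $r\ll1$, whereas the stated bound is $\min\{r^{-3/2},r^{-1/2}\}$; as you note, the crude bound $\|v\|_{L^\infty L^2}\lesssim 1$ from \eqref{v-l2} makes these refinements unnecessary anyway.
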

\begin{proof}
Using \eqref{Phit-h1}, we infer that
\begin{equation*}
\aligned
\|\Phi\|_{L^\infty\dot{H}^2}\sim \|\Delta\Phi\|_{L^\infty L^2}&\leq \|\Phi_{tt}\|_{L^\infty L^2}+\|\Box\Phi\|_{L^\infty L^2}\\
&\lesssim 1+\|\varphi_{>1}\Box \Phi\|_{L^\infty L^2}+\|\varphi_{<1}\Box \Phi\|_{L^\infty L^2}.
\endaligned
\end{equation*}
Next, we rely on \eqref{rgrt1} and \eqref{decay-v} to deduce
\begin{equation*}
\|\varphi_{>1}\Box \Phi\|_{L^\infty L^2}\lesssim \left\|\frac{\varphi_{\geq 1/2}}{r^3}\right\|_{L^\infty L^2}+\left\| \varphi_{>1}\frac{|v|}{r^2}\right\|_{L^\infty L^2}\lesssim 1,
\end{equation*}
while the application of \eqref{rll1}, the Sobolev embeddings \eqref{Sob-gen}, and \eqref{Phi-h1} yields
\begin{equation*}
\|\varphi_{<1}\Box \Phi\|_{L^\infty L^2}\lesssim \|\varphi_{<1} \Phi^2\|_{L^\infty L^2}\lesssim \|\Phi\|^2_{L^\infty L^{4}}\lesssim \|\Phi\|^2_{L^\infty H^1}\lesssim 1.
\end{equation*}
The desired estimate \eqref{Phi-lih2} follows as the joint conclusion of these three bounds.
\end{proof}

\begin{remark}
The proof of the same estimate in \cite{GGr-17} is considerably more involved. It requires both a decomposition in the spatial frequency and proving first the intermediate bound
\[
\|\Phi\|_{L^\infty\dot{H}^{3/2}}\lesssim 1.
\] 
\end{remark}

If we invoke the radial Sobolev inequalities \eqref{rad-Sob-1} and \eqref{rad-Sob-2} and the asymptotic equation \eqref{rll1} in the context of the $\dot{H}^2$ regularity for $\Phi$, then we are able to upgrade the previous decay estimates satisfied by $\Phi$, $v$, and $\tA(r,v)-1$. 

\begin{prop}
Under the assumptions of Theorem \ref{main-th-v-2}, we have that 
\begin{align}
|\Phi(t,r)|&\lesssim \min\left\{\frac{1}{r^{3/2}}, \frac{1}{r^{\epsilon}}\right\},\label{decay-Phi-2}\\
|v(t,r)|&\lesssim \min\left\{\frac{1}{r^{3/2}}, \frac{1}{r^{\epsilon/2}}\right\},\label{decay-v-2}\\
|\tA(r,v)-1|&\lesssim \min\left\{\frac{1}{r^3}, \frac{1}{r^{\epsilon}}\right\},\label{decay-A-2}
\end{align}
are valid for a fixed, yet arbitrary, $0<\epsilon<3/2$.
\end{prop}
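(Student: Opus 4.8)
The plan is to deduce all three bounds from the freshly obtained $\dot{H}^2$ regularity for $\Phi$ combined with the energy-level information from the previous section. First I would note that the large-$r$ halves of \eqref{decay-Phi-2}--\eqref{decay-A-2} are already available: the bound $|\Phi|\lesssim r^{-3/2}$ for $r\gtrsim 1$ is exactly the content of \eqref{decay-Phi}, and then \eqref{decay-v} together with the relation $|\Phi-\varphi_{\geq 1/2}/r^3|\sim|v|$ from \eqref{rgrt1} gives $|v|\lesssim r^{-3/2}$ for $r\gtrsim 1$, which via \eqref{ta-formula} (using $\sin^2(ry+\varphi)\lesssim \min\{(ry+\varphi)^2,1\}$ and $r^2|v|^2\lesssim r^{-1}$) yields $|\tA(r,v)-1|\lesssim r^{-3}$ there. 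So the real work is the small-$r$ regime, where we want the improvement from exponent $1$ (resp. $1/2$) to $\epsilon$ (resp. $\epsilon/2$).

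For small $r$, the key is to apply the radial Sobolev inequality \eqref{rad-Sob-1} with $n=4$ and $\sigma = 2-\epsilon$ (which satisfies $\tfrac12<\sigma<2$ provided $0<\epsilon<3/2$), giving $r^{\epsilon}|\Phi(t,r)|\lesssim \|\Phi\|_{\dot{H}^{2-\epsilon}(\R^4)}$. By interpolation \eqref{interpol-bd} (or simply interpolating between the $\dot H^1$ bound \eqref{Phi-h1} and the $\dot H^2$ bound \eqref{Phi-lih2}), we get $\|\Phi\|_{L^\infty\dot{H}^{2-\epsilon}}\lesssim \|\Phi\|_{L^\infty\dot H^1}^{\epsilon}\|\Phi\|_{L^\infty\dot H^2}^{1-\epsilon}\lesssim 1$, hence $|\Phi(t,r)|\lesssim r^{-\epsilon}$ for $r\lesssim 1$. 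This establishes \eqref{decay-Phi-2}. Then \eqref{decay-v-2} in the small-$r$ regime follows from the asymptotic relation $|\Phi|\sim |v|+v^2$ of \eqref{rll1}: since $|\Phi|\lesssim r^{-\epsilon}\lesssim r^{-\epsilon}$ and in particular $|\Phi|\gtrsim |v|$ when $|v|\lesssim 1$, we obtain $|v(t,r)|\lesssim r^{-\epsilon}$; to sharpen this to $r^{-\epsilon/2}$ one uses that when $|v|$ is not small the quadratic term dominates, so $v^2\lesssim |\Phi|\lesssim r^{-\epsilon}$ and thus $|v|\lesssim r^{-\epsilon/2}$, while the regime $|v|\lesssim 1$ is trivially $\lesssim r^{-\epsilon/2}$ for small $r$. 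Finally \eqref{decay-A-2} for small $r$ comes from \eqref{ta-formula}: $\tA(r,v)-1 = \sin^2(rv+\varphi)/r^2 \lesssim \min\{v^2, 1/r^2\}$, and plugging in $|v|\lesssim r^{-\epsilon/2}$ gives $\tA(r,v)-1\lesssim r^{-\epsilon}$, while the crude bound $\sin^2(\cdot)/r^2\le 1/r^2$ combined with the large-$r$ decay handles the $r^{-3}$ side (for $r$ bounded away from $0$ it is $\lesssim 1$, absorbed into $\min\{r^{-3},r^{-\epsilon}\}$ appropriately since on compact sets both sides are comparable to constants).

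The main obstacle, such as it is, is bookkeeping rather than a genuine difficulty: one must be careful that the radial Sobolev estimate \eqref{rad-Sob-1} requires the \emph{strict} inequality $\sigma>1/2$, which forces the restriction $\epsilon<3/2$ stated in the proposition, and that the small-$r$ versus large-$r$ splitting is done consistently across all three estimates so that the $\min$ structure is genuinely respected (in particular on the transition region $r\sim 1$). The only place demanding slight care is the passage from the bound on $|\Phi|$ to the bound on $|v|$ with the improved exponent $\epsilon/2$ instead of $\epsilon$, which exploits the quadratic gain in \eqref{rll1}; this is the step where the Faddeev nonlinearity's favorable structure (already highlighted in the remark after \eqref{Phi-h1}) is used, and it is entirely elementary once the asymptotic relation $|\Phi|\sim|v|+v^2$ is in hand.
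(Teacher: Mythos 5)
Your argument is correct and is exactly the route the paper intends (the paper gives no written proof, only the preceding sentence invoking the radial Sobolev inequalities \eqref{rad-Sob-1}--\eqref{rad-Sob-2}, the asymptotics \eqref{rll1}, and the $\dot{H}^2$ bound on $\Phi$): you apply \eqref{rad-Sob-1} with $\sigma=2-\epsilon$ after interpolating \eqref{Phi-h1} and \eqref{Phi-lih2}, then use $|\Phi|\sim|v|+v^2$ for small $r$ to gain the extra factor in \eqref{decay-v-2}, and the already-established bounds \eqref{decay-Phi}, \eqref{decay-v}, \eqref{decay-A} for the $r\gtrsim 1$ halves. Your proposal simply fills in the details the paper leaves implicit, so there is nothing to correct.
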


\begin{remark}
When compared to the similar result in \cite{GGr-17}, the less precise nature of this proposition is motivated by the radial Sobolev inequality \eqref{rad-Sob-1} being limited in applicability to Sobolev regularities in the range $1/2<s<2$.
\end{remark}


\section{$H^3$-type analysis} 

Here, we take the next step in improving the Sobolev regularities for $\Phi$ and its derivatives by arguing that
\begin{equation}
\|\Phi\|_{L^\infty \dot{H}^3}+\|\Phi_t\|_{L^\infty \dot{H}^2}+\|\Phi_{tt}\|_{L^\infty \dot{H}^1}+\|\Phi_{ttt}\|_{L^\infty L^2}\lesssim 1.
\label{H3-norm}
\end{equation}
We proceed in a similar fashion to the last section and begin by writing a wave equation for $\Phi_{tt}$, which is analyzed through Strichartz estimates. This provides us with the desired regularity for  both $\Phi_{tt}$ and $\Phi_{ttt}$. Following this, we are able to deduce that $\Phi_t\in L^\infty \dot{H}^2$ and $\Phi\in L^\infty \dot{H}^3$ by investigating equations satisfied by $\Phi_t$ and $\Phi_r$, respectively. As a consequence of \eqref{H3-norm}, we can further upgrade the decay rates for  $\Phi$, $v$, and $\tA(r,v)-1$.

\vspace{.1in}
\hrule

\subsection{Derivation of $\dot{H}^1$ and $L^2$ regularities for  $\Phi_{tt}$ and $\Phi_{ttt}$}

If we differentiate \eqref{Phit-final} and \eqref{Box-Phit} with respect to $t$, we obtain
\begin{equation}
\Phi_{tt}=\tA^{1/2}(r,v)v_{tt}+ \tA^{-1/2}(r,v)\,\frac{\sin(2u)}{2r}\,v^2_{t}
\label{Phitt-final}
\end{equation}
and
\begin{equation}
\aligned
\Box_{4+1}\Phi_{tt}=& \left(\tA(r,v)-1\right)\left(\tA^{-1}(r,v)+\tA^{-2}(r,v)\right)\Phi_{tt}\\
&+2\tA^{-3}(r,v)\partial_t(\tA(r,v))\Phi_{t}.
\endaligned
\label{Box-Phitt}
\end{equation}
An important remark is that \eqref{Phit-final} and \eqref{decay-A} imply
\begin{equation}
|\partial_t(\tA(r,v))|=\frac{|\sin(2u)|}{r}\,|v_t|\lesssim \tA^{1/2}(r,v)\,|v_t|=|\Phi_t|
\label{tA-Phit}
\end{equation}
and 
\[
0<\tA^{-3}(r,v)\lesssim 1,
\]
which, together with \eqref{ta-2}, yield 
\begin{equation}
|\Box\Phi_{tt}|\lesssim\Phi_{t}^2+ |(\tA(r,v)-1)\Phi_{tt}|.
\label{bd-box-phitt}
\end{equation}
This is all that is needed to derive the desired regularities for $\Phi_{tt}$ and $\Phi_{ttt}$.

\begin{prop}
Under the assumptions of Theorem \ref{main-th-v-2},
\begin{equation}
\|\Phi_{tt}\|_{L^p L^q}+\|\Phi_{tt}\|_{L^\infty \dot{H}^{1}}+\|\Phi_{ttt}\|_{L^\infty L^2}\lesssim 1
\label{Phitt-h1}
\end{equation}
holds true for all pairs $(p,q)$ satisfying \eqref{pq-h2}.
\label{prop-Phitt-h1}
\end{prop}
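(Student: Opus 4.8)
The plan is to mirror the argument already used for Proposition~\ref{prop-Phit-h1}, now applied to the wave equation \eqref{Box-Phitt} for $\Phi_{tt}$, with the forcing term controlled via \eqref{bd-box-phitt}. First I would apply the Strichartz estimate \eqref{Str-gen} on a time subinterval $I=[a,b]\subset[0,T)$ with $\sigma=1$ and dual exponents $(\bar{p}',\bar{q}')=(1,2)$, obtaining
\[
\aligned
\|\Phi_{tt}\|_{L^pL^q(I\times\R^4)}&+\|\Phi_{tt}\|_{L^\infty\dot{H}^1(I\times\R^4)}+\|\Phi_{ttt}\|_{L^\infty L^2(I\times\R^4)}\\
&\lesssim \|\Phi_{tt}(a)\|_{\dot{H}^1(\R^4)}+\|\Phi_{ttt}(a)\|_{L^2(\R^4)}+\|\Box\Phi_{tt}\|_{L^1L^2(I\times\R^4)}
\endaligned
\]
for all $(p,q)$ satisfying \eqref{pq-h2}, in particular for $(7,14/3)$. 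The two pieces of the forcing term are handled separately: for $\|(\tA(r,v)-1)\Phi_{tt}\|_{L^1L^2(I\times\R^4)}$ I would use H\"older in exactly the same way as in \eqref{Box-Phit-l1l2}, splitting as $\|\tA(r,v)-1\|_{L^{7/6}L^{7/2}(I\times\R^4)}\|\Phi_{tt}\|_{L^7L^{14/3}(I\times\R^4)}\lesssim |I|^{6/7}\|\Phi_{tt}\|_{L^7L^{14/3}(I\times\R^4)}$, invoking \eqref{decay-A} (or \eqref{decay-A-2}) for the $L^\infty$-in-space bound on $\tA(r,v)-1$. For the quadratic term $\|\Phi_t^2\|_{L^1L^2(I\times\R^4)}=\|\Phi_t\|_{L^2L^4(I\times\R^4)}^2$, I would note that $(p,q)=(2,4)$ is admissible (here I should double-check: $4/2+2/4=5/2>1$, so $(2,4)$ violates the first Strichartz condition in \eqref{pq-h2}; instead I would interpolate — $\|\Phi_t\|_{L^2L^4}$ is controlled by interpolating between $\|\Phi_t\|_{L^\infty\dot H^1}$, hence $\|\Phi_t\|_{L^\infty L^4}$ by Sobolev embedding \eqref{Sob-gen} on $\R^4$, and the spacetime norm $\|\Phi_t\|_{L^7L^{14/3}}$ from \eqref{Phit-h1}, giving $\|\Phi_t\|_{L^2L^4}\lesssim |I|^{\theta}$ for some $\theta>0$ after another H\"older in time). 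Thus $\|\Box\Phi_{tt}\|_{L^1L^2(I\times\R^4)}\lesssim |I|^{\theta'}\bigl(1+\|\Phi_{tt}\|_{L^7L^{14/3}(I\times\R^4)}\bigr)$ with $\theta'>0$.

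With this in hand the continuity/absorption argument runs verbatim as before: for $|I|$ small the term $|I|^{\theta'}\|\Phi_{tt}\|_{L^7L^{14/3}(I\times\R^4)}$ is absorbed into the left-hand side, yielding
\[
M(I)\lesssim \|\Phi_{tt}(a)\|_{\dot{H}^1(\R^4)}+\|\Phi_{ttt}(a)\|_{L^2(\R^4)}+|I|^{\theta'},
\]
where $M(I):=\|\Phi_{tt}\|_{L^7L^{14/3}(I\times\R^4)}+\|\Phi_{tt}\|_{L^\infty\dot{H}^1(I\times\R^4)}+\|\Phi_{ttt}\|_{L^\infty L^2(I\times\R^4)}$. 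Fixing $T_1$ as the maximal such length, one propagates across consecutive intervals $[kT_1,(k+1)T_1]$ using the endpoint control $M([(k+1)T_1,(k+2)T_1])\lesssim M([kT_1,(k+1)T_1])+1$, and the base case $M([0,T_1])\lesssim \|\Phi_{tt}(0)\|_{\dot{H}^1(\R^4)}+\|\Phi_{ttt}(0)\|_{L^2(\R^4)}\lesssim 1$ is supplied by the Appendix together with the hypothesis of Theorem~\ref{main-th-v-2}. Summing the finitely many steps on $[0,T)$ gives $\|\Phi_{tt}\|_{L^7L^{14/3}}+\|\Phi_{tt}\|_{L^\infty\dot{H}^1}+\|\Phi_{ttt}\|_{L^\infty L^2}\lesssim 1$. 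Feeding this back into the $\|\Box\Phi_{tt}\|_{L^1L^2}$ estimate shows that forcing term is globally bounded, and a final application of \eqref{Str-gen} promotes the bound to all admissible $(p,q)$ in \eqref{pq-h2}, completing the proof.

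The main obstacle I expect is the quadratic term $\Phi_t^2$, which has no analogue in \eqref{Box-Phit} (the forcing there was linear in $\Phi_t$). One must verify that it genuinely comes with a positive power of $|I|$ so that it can be absorbed or controlled in the iteration — this requires combining the $L^\infty\dot H^1\hookrightarrow L^\infty L^4$ bound (valid only because the spatial dimension is $4$) with the already-established spacetime Strichartz norm for $\Phi_t$, and then paying an extra H\"older-in-time factor. A secondary bookkeeping point is confirming that the coefficient $\tA^{-3}(r,v)\partial_t(\tA(r,v))$ in \eqref{Box-Phitt} is indeed dominated by $|\Phi_t|$, which is exactly the content of \eqref{tA-Phit}; everything else is numerology identical to the $H^2$ stage.
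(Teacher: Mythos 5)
Your proposal is correct and follows essentially the same route as the paper: Strichartz on \eqref{Box-Phitt} with $(\bar p',\bar q')=(1,2)$, the H\"older splitting $\|\tA(r,v)-1\|_{L^{7/6}L^{7/2}}\|\Phi_{tt}\|_{L^7L^{14/3}}\lesssim |I|^{6/7}\|\Phi_{tt}\|_{L^7L^{14/3}}$, the quadratic term bounded by $\|\Phi_t\|^2_{L^2L^4(I)}\lesssim |I|\,\|\Phi_t\|^2_{L^\infty H^1}\lesssim 1$ via H\"older in time and the Sobolev embedding $\dot H^1(\R^4)\subset L^4(\R^4)$ (no Strichartz admissibility of $(2,4)$ is needed, and no interpolation with the $L^7L^{14/3}$ norm either — the paper does it exactly this simpler way), followed by absorption on short intervals, iteration, the Appendix bound for the data at $t=0$, and a final pass to recover all admissible $(p,q)$.
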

\begin{proof}
The argument follows in the footsteps of the one for Proposition \ref{prop-Phit-h1}, in the sense that we start by applying the Strichartz estimates \eqref{Str-gen} to the equation \eqref{Box-Phitt}, i.e., 
\begin{equation*}
\aligned
\|\Phi_{tt}\|&_{L^pL^q(I\times \R^4)}+\|\Phi_{tt}\|_{L^\infty\dot{H}^1(I\times \R^4)}+\|\Phi_{ttt}\|_{L^\infty L^2(I\times \R^4)}\\
&\qquad\lesssim \|\Phi_{tt}(a)\|_{\dot{H}^1( \R^4)}+\|\Phi_{ttt}(a)\|_{L^2(\R^4)}+\|\Box\Phi_{tt}\|_{L^1L^2(I\times \R^4)},
\endaligned
\end{equation*}
which are valid under the same restrictions for which \eqref{Phit-h1-str} holds true. Next, we use \eqref{bd-box-phitt}, the Sobolev embeddings \eqref{Sob-gen}, \eqref{Phit-h1}, and \eqref{decay-A} to infer that
\begin{equation*}
\aligned
\|\Box\Phi_{tt}\|_{L^1L^2(I\times \R^4)}
\lesssim\, &\|\Phi_t\|^2_{L^2L^{4}(I\times \R^4)}+ \|\tA(r,v)-1\|_{L^{7/6}L^{7/2}(I\times \R^4)}\|\Phi_{tt}\|_{L^7L^{14/3}(I\times \R^4)}\\
\lesssim\, &|I|\|\Phi_t\|^2_{L^\infty H^{1}(I\times \R^4)}+ |I|^{6/7}\|\Phi_{tt}\|_{L^7L^{14/3}(I\times \R^4)}\\
\lesssim\, &1+ |I|^{6/7}\|\Phi_{tt}\|_{L^7L^{14/3}(I\times \R^4)}.
\endaligned
\end{equation*}
Therefore, we claim that 
\[
\aligned
\|\Phi_{tt}\|_{L^7L^{14/3}(I\times \R^4)}+\|\Phi_{tt}\|_{L^\infty\dot{H}^{1}(I\times \R^4)}&+\|\Phi_{ttt}\|_{L^\infty L^2(I \times \R^4)}\\
&\lesssim 1+ \|\Phi_{tt}(a)\|_{\dot{H}^{1}( \R^4)}+\|\Phi_{ttt}(a)\|_{L^2(\R^4)},
\endaligned
\]
for $|I|\sim 1$, yet small enough. As before, we invoke the Appendix to deduce 
\[
\|\Phi_{tt}\|_{L^7L^{14/3}}+\|\Phi_{tt}\|_{L^\infty\dot{H}^1}+\|\Phi_{ttt}\|_{L^\infty L^2}\lesssim 1,
\]
which is enough to argue that
\[
\|\Phi_{tt}\|_{L^pL^q}\lesssim 1
\]
holds true for all pairs $(p,q)\neq (7,14/3)$ satisfying \eqref{pq-h2}.
\end{proof}

\subsection{$\dot{H}^3$ and $\dot{H}^2$ regularities for  $\Phi$ and $\Phi_{t}$ and further improvement of the decay information}

As a direct consequence of the previous proposition, we obtain the $\dot{H}^2$ regularity for  $\Phi_{t}$.

\begin{prop}
Under the assumptions of Theorem \ref{main-th-v-2}, it is true that
\begin{equation}
\|\Phi_t\|_{L^\infty\dot{H}^2}\lesssim 1. 
\label{Phit-lih2}
\end{equation}
\end{prop}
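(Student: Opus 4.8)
The plan is to deduce \eqref{Phit-lih2} from the wave equation \eqref{Box-Phit} for $\Phi_t$ together with the regularity of $\Phi_{tt}$ and $\Phi$ already in hand. Writing $\Box_{4+1}\Phi_t = \Phi_{tt} - \Delta\Phi_t$, we have the elliptic identity
\[
\|\Phi_t\|_{L^\infty\dot{H}^2}\sim \|\Delta\Phi_t\|_{L^\infty L^2}\leq \|\Phi_{tt}\|_{L^\infty L^2}+\|\Box_{4+1}\Phi_t\|_{L^\infty L^2}.
\]
The first term is $\lesssim 1$ by Proposition \ref{prop-Phitt-h1}, so everything reduces to bounding $\|\Box\Phi_t\|_{L^\infty L^2}$. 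Using the closed form \eqref{Box-Phit}, namely $\Box\Phi_t = (\tA(r,v)-1)(\tA^{-1}(r,v)+\tA^{-2}(r,v))\Phi_t$, and the pointwise bound \eqref{ta-2}, this is controlled by $\|(\tA(r,v)-1)\Phi_t\|_{L^\infty L^2}$.

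First I would split this into the regions $r\lesssim 1$ and $r\gtrsim 1$ using $\varphi_{<1}$ and $\varphi_{>1}$. On $\{r\gtrsim 1\}$, the decay estimate \eqref{decay-A} (or its sharpened version \eqref{decay-A-2}) gives $|\tA(r,v)-1|\lesssim r^{-1}$, so
\[
\|\varphi_{>1}(\tA(r,v)-1)\Phi_t\|_{L^\infty L^2}\lesssim \left\|\frac{\Phi_t}{r}\right\|_{L^\infty L^2(\{r\gtrsim 1\})}\lesssim \|\nabla\Phi_t\|_{L^\infty L^2}\lesssim 1
\]
by Hardy's inequality \eqref{Hardy} and the $\dot{H}^1$ bound on $\Phi_t$ from \eqref{Phit-h1}. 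On $\{r\lesssim 1\}$, since $\tA-1 = \sin^2 u/r^2$ is bounded there by \eqref{sinu-r}, one has $|\tA(r,v)-1|\lesssim 1$ in fact with an extra gain, but even the crude bound $|\tA(r,v)-1|\lesssim r^{-\epsilon}$ from \eqref{decay-A-2} combined with Hölder (putting $\Phi_t$ in some $L^q$, $q>2$, via Sobolev embedding from $\dot H^1$) suffices; more simply, using $\tA-1\sim |v| r$ when $r|v|\lesssim 1$ and $|\Phi_t|\sim \tA^{1/2}|v_t|$, the product is controlled by $\|v\|_{L^\infty}$-type factors times $\|v_t\|_{L^2}$, all of which are $\lesssim 1$ by the preliminary decay estimates and \eqref{v-h1}.

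The main obstacle I anticipate is making sure the $r\lesssim 1$ estimate closes cleanly: $\tA(r,v)-1$ does not decay near the origin, so one cannot simply borrow a negative power of $r$ as on $\{r\gtrsim 1\}$, and one must instead exploit either the boundedness of $\sin^2 u/r^2$ together with some integrability of $\Phi_t$ in $x$ near $0$, or the asymptotics \eqref{rll1}--\eqref{decay-v-2} to extract the needed smallness. Assuming the authors' pattern, I expect the proof to be short: cite \eqref{Box-Phit}, \eqref{ta-2}, \eqref{decay-A} (or \eqref{decay-A-2}), apply Hardy \eqref{Hardy} on the far region and Sobolev/decay bounds on the near region, and conclude via the elliptic estimate above, exactly paralleling the proof of Proposition \ref{prop-Phi-lih2} in the preceding section.
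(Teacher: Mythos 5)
Your proposal is essentially the paper's proof: the paper writes $\|\Phi_t\|_{L^\infty\dot{H}^2}\sim\|\Delta\Phi_t\|_{L^\infty L^2}\lesssim\|\Phi_{ttt}\|_{L^\infty L^2}+\|\Box\Phi_t\|_{L^\infty L^2}$ and then controls $\Box\Phi_t$ via \eqref{Box-Phit} and \eqref{ta-2} by a single global H\"older bound $\|\tA(r,v)-1\|_{L^\infty L^{4}}\|\Phi_t\|_{L^\infty L^{4}}\lesssim 1$, using \eqref{decay-A-2} and \eqref{Phit-h1}, which is exactly your fallback route (no region splitting or Hardy is needed). Two harmless slips in your write-up: the elliptic identity should involve $\Phi_{ttt}$, not $\Phi_{tt}$ (the required $\|\Phi_{ttt}\|_{L^\infty L^2}\lesssim 1$ is indeed part of Proposition \ref{prop-Phitt-h1}), and near the origin $\tA(r,v)-1$ is not yet known to be bounded at this stage (\eqref{sinu-r} only gives $\lesssim r^{-1}$ there, \eqref{decay-A-2} gives $\lesssim r^{-\epsilon}$, and for $r|v|\lesssim 1$ one has $\tA-1\sim v^2$, not $|v|r$), but your H\"older/Sobolev estimate with \eqref{decay-A-2} closes the argument just as in the paper.
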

\begin{proof}
By virtue of \eqref{Box-Phit}, \eqref{ta-2}, \eqref{Phitt-h1}, \eqref{decay-A-2}, and \eqref{Phit-h1}, we have  that
\[
\aligned
\|\Phi_t\|_{L^\infty\dot{H}^2}\sim \|\Delta\Phi_t\|_{L^\infty L^2}&\lesssim \|\Phi_{ttt}\|_{L^\infty L^2}+\|\Box\Phi_t\|_{L^\infty L^2}\\
&\lesssim 1+\|\tA(r,v)-1\|_{L^\infty L^{4}}\|\Phi_t\|_{L^\infty L^{4}}\\
&\lesssim 1.
\endaligned
\] 
\end{proof}

A more intricate argument is needed to derive the corresponding Sobolev regularity for  $\Phi$.

\begin{prop}
Under the assumptions of Theorem \ref{main-th-v-2}, it is true that
\begin{equation}
\|\Phi\|_{L^\infty\dot{H}^3}\lesssim 1. 
\label{Phi-lih3}
\end{equation}
\end{prop}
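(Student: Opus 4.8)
The plan is to control $\|\Phi\|_{L^\infty\dot H^3}\sim\|\Delta\Phi\|_{L^\infty\dot H^1}$ by differentiating the main equation \eqref{Box-Phi} in a way that isolates a wave equation with a right-hand side we can already estimate. Since $\Box_{4+1}\Phi=\Phi-\int_0^v\tA^{-3/2}\,dy+\varphi_{\geq 1/2}/r^3$, we have $\Delta\Phi=\Phi_{tt}-\Phi+\int_0^v\tA^{-3/2}\,dy-\varphi_{\geq 1/2}/r^3$, so it suffices to bound the $\dot H^1$ norm of each term on the right. The term $\Phi_{tt}$ is handled by \eqref{Phitt-h1}, the term $\Phi$ by \eqref{Phi-h1} together with \eqref{Phi-lih2} (after localizing; near infinity one uses \eqref{rgrt1} and \eqref{decay-v-2}, near the origin the asymptotics \eqref{rll1}), and the smooth tail $\varphi_{\geq 1/2}/r^3$ is harmless. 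The real content is the $\dot H^1$ bound for $\int_0^v\{\tA^{-3/2}\}(\tA-1)\,dy$ — equivalently, for $\Box\Phi-\Phi-\varphi_{\geq 1/2}/r^3$ — which carries one spatial derivative through a nonlinear, $r$-dependent integral expression.

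First I would split into the regions $r\lesssim 1$ and $r\gtrsim 1$ using the cutoffs $\varphi_{<1},\varphi_{>1}$. For $r\gtrsim 1$: here $\tA-1\sim r^{-2}\sin^2u$ and $\tA\sim 1$ with all derivatives bounded, so $\Box\Phi-\varphi_{\geq 1/2}/r^3$ is morally $r^{-2}\times(\text{bounded analytic function of }u)\times v$-type, and \eqref{decay-A-2} plus \eqref{decay-v-2} give strong decay; one computes $\nabla_x$ of this expression, picking up either a $\nabla_x u=\nabla_x(rv+\varphi)$ factor (controlled in $L^\infty L^2$ via \eqref{vr-l2}-type bounds and Moser \eqref{Moser}) or an extra power of $r^{-1}$ from differentiating the explicit $r$-weights (even better decay), and closes in $L^\infty L^2$ using Sobolev embeddings \eqref{Sob-gen} and Hölder. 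For $r\lesssim 1$: here the asymptotics \eqref{rll1} say $\Box\Phi\sim\min\{v^2,|v|^3\}$, and in fact after the change of variables $w=ry$ the integrand is a smooth even function of $ry$ times $y^2$, so $\Box\Phi-\Phi-\varphi_{\geq 1/2}/r^3$ is a genuinely smooth (no $r$-singularity) analytic-type function vanishing quadratically in $v$; its gradient is then bounded by $|v|\,|\nabla_x(rv)|$ up to $\tA$-dependent factors, and one uses $\|v\|_{L^\infty}\lesssim\|v\|_{L^\infty H^{s}}$ from local theory together with \eqref{Phi-lih2}, \eqref{decay-v-2}, and the fractional Leibniz/Kato-Ponce estimates \eqref{Lbnz-0}-\eqref{Lbnz-2} and Moser \eqref{Moser} to put the whole thing in $L^\infty L^2$.

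The step I expect to be the main obstacle is the $\dot H^1$ bound on the nonlinear integral term near $r\sim 1$, precisely at the interface where the two regimes meet: there one cannot exploit either the smoothness of the origin expansion or the clean $r^{-2}$ decay at infinity, and one must carefully verify that differentiating $\int_0^v\{\tA^{-3/2}(\tA-1)\}\,dy$ — where the upper limit $v$ itself depends on $r$, and $\tA$ depends on $r$ both explicitly and through $u=rv+\varphi$ — produces only terms that are products of (i) bounded analytic functions of $u$, (ii) the already-controlled quantities $v$, $\nabla_{t,x}v$, $\nabla_{t,x}u$, and (iii) weights no worse than the ones appearing in $\Box\Phi$ itself. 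This is the place where the paper remarks that the Faddeev case requires novel handling not present in \cite{GGr-17}; concretely, one needs to commute $D^1$ past the $v$-dependent upper limit of integration, which I would do by writing $\p_r\int_0^v f(r,y)\,dy=f(r,v)\,v_r+\int_0^v\p_r f(r,y)\,dy$ and then estimating the two resulting pieces separately, using \eqref{vr-l2}, \eqref{decay-v-2}, the Moser bound \eqref{Moser} applied to the analytic functions $N_i$ (or their $\tA$-analogues, with \eqref{dni-li}-type control on all derivatives), and the Kato-Ponce inequality \eqref{Lbnz-1} to distribute the remaining derivative in the integral term. Once these pieces are assembled, $\|\Delta\Phi\|_{L^\infty\dot H^1}\lesssim 1$ follows, completing the argument for \eqref{H3-norm}.
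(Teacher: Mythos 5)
Your plan is essentially the paper's proof: reduce $\|\Phi\|_{L^\infty\dot H^3}$ to $\|\Phi_{tt}\|_{L^\infty\dot H^1}$ plus one spatial derivative of $\Box\Phi$ in $L^\infty L^2$, then differentiate the $v$-dependent integral via the boundary term $f(r,v)v_r$ plus $\int_0^v\partial_r f\,dy$, closing with \eqref{ta-r-g1}, \eqref{ta-r-l1}, \eqref{decay-v-2} and the known $L^2$ bounds on $v_r$ and $\Phi_r$ (the paper keeps the combined factor $(\tA^{1/2}-\tA^{-3/2})v_r$ and handles it by an $L^4\times L^4$ pairing through \eqref{phir-vr} and \eqref{Phi-lih2}, whereas your split through $\Phi$ makes the boundary term simply $\tA^{-3/2}v_r$). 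Two small cleanups: $\Box\Phi-\Phi-\varphi_{\geq 1/2}/r^3$ equals $-\int_0^v\tA^{-3/2}\,dy$, not $\int_0^v\tA^{-3/2}(\tA-1)\,dy$, and you should not invoke $\|v\|_{L^\infty}\lesssim\|v\|_{L^\infty H^s}$ (a uniform-in-time $H^s$ bound is exactly what is being proved, so that step would be circular), but neither remark is needed since the decay estimates you already cite, together with $\|v_r\|_{L^\infty L^2}\lesssim 1$, suffice, and no Kato--Ponce or Moser input is required.
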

\begin{proof}
We start by relying on \eqref{Phitt-h1} to deduce
\begin{equation}
\aligned
\|\Phi\|_{L^\infty\dot{H}^3}\sim \|D\Delta\Phi\|_{L^\infty L^2}&\lesssim \|D\Phi_{tt}\|_{L^\infty L^2}+\|D\Box\Phi\|_{L^\infty L^2}\\
&\sim \|\Phi_{tt}\|_{L^\infty \dot{H}^1}+\|\partial_r\Box\Phi\|_{L^\infty L^2}\\
&\lesssim 1+\|\partial_r\Box\Phi\|_{L^\infty L^2}.
\endaligned
\label{dr-box-phi-l2}
\end{equation}
If we differentiate \eqref{Box-Phi} with respect to $r$, we obtain
\begin{equation*}
\aligned
\partial_r\Box\Phi=\,&\frac{1}{2}\int_0^v \left\{\left(\tA^{-1/2}+3\tA^{-5/2}\right)\tA_r\right\}dy\\&+\left(\tA(r,v)-1\right)\left(\tA^{-1}(r,v)+\tA^{-2}(r,v)\right)\tA^{1/2}(r,v)v_r+\frac{\varphi_{\geq 1/2}}{r^3}
\endaligned
\end{equation*}
and, taking into account \eqref{ta-formula}, we infer that
\begin{equation}
\left|\partial_r\Box\Phi\right|\lesssim \int_0^{|v|} \left\{|\tA_r|\right\}dy+|\tA(r,v)-1|\tA^{1/2}(r,v)|v_r|+\frac{|\varphi_{\geq 1/2}|}{r^3}.
\label{dr-box-phi}
\end{equation}
One can easily verify that
\begin{equation}
\left\|\frac{\varphi_{\geq 1/2}}{r^3}\right\|_{L^\infty L^2}\lesssim 1
\label{phi-g12}
\end{equation}
and, consequently, we focus on the other two terms on the right-hand side of the previous estimate.

In what concerns the integral term, the joint application of \eqref{ta-formula}, \eqref{ta-r}, \eqref{ta-r-g1}, \eqref{ta-r-l1}, and \eqref{decay-v-2} yields
\begin{equation}
\int_0^{|v|}\left\{|\tA_r|\right\}dy\,\lesssim \,\frac{|v|+v^2}{r^2}\lesssim \frac{1}{r^{7/2}}
\label{int-tar-g1}
\end{equation}
when $r\geq 1$, and
\begin{equation}
\int_0^{|v|}\left\{|\tA_r|\right\}dy\,\lesssim \,r|v|^5\lesssim r^{1-5\epsilon/2}
\label{int-tar-l1}
\end{equation}
when $r<1$. Thus, we derive immediately that
\begin{equation}
\left\|\int_0^{|v|}\left\{|\tA_r|\right\}dy\right\|_{L^\infty L^2}\lesssim 1.
\label{int-tar}
\end{equation}

Finally, for the term in \eqref{dr-box-phi} having $v_r$ as a factor, we work with \eqref{Phi-final} to deduce
\begin{equation}
\Phi_r=\tA^{1/2}(r,v)v_r+\frac{1}{2}\int_0^v\left\{\tA^{-1/2}\tA_r\right\}dy+\frac{\varphi_{\geq 1/2}}{r^3}.
\label{phir-vr}
\end{equation}
It is an easy verification that
\[
\left\|\frac{\varphi_{\geq 1/2}}{r^3}\right\|_{L^\infty L^{4}}\lesssim 1
\]
and, using \eqref{int-tar-g1} and \eqref{int-tar-l1}, we obtain
\[
\left\|\int_0^{v}\left\{\tA^{-1/2}\tA_r\right\}dy\right\|_{L^\infty L^{4}}\lesssim 1.
\]
Moreover, on the basis of \eqref{Sob-gen} and \eqref{Phi-lih2}, we infer that
\[
\|\Phi_r\|_{L^\infty L^{4}}\lesssim \|\Phi_r\|_{L^\infty H^{1}}\lesssim 1. 
\]
Hence, by putting together the last four mathematical statements, we derive that 
\[
\|\tA^{1/2}(r,v)v_r\|_{L^\infty L^{4}}\lesssim 1.
\]
If we combine this estimate with \eqref{decay-A-2}, then we arrive at
\[
\|(\tA(r,v)-1)\tA^{1/2}(r,v)v_r\|_{L^\infty L^2}\lesssim \|\tA(r,v)-1\|_{L^\infty L^4}\|\tA^{1/2}(r,v)v_r\|_{L^\infty L^{4}}\lesssim 1,
\]
which, jointly with \eqref{dr-box-phi-l2}, \eqref{dr-box-phi}, \eqref{phi-g12}, and \eqref{int-tar}, implies \eqref{Phi-lih3}.
\end{proof}

Consequently, we can argue as in the last section and further upgrade the decay bounds for $\Phi$, $v$, and $\tA(r,v)-1$.

\begin{prop}
Under the assumptions of Theorem \ref{main-th-v-2}, we have
\begin{align}
|\Phi(t,r)|&\lesssim \frac{1}{1+r^{3/2}},\label{decay-Phi-3}\\
|v(t,r)|&\lesssim \frac{1}{1+r^{3/2}},\label{decay-v-3}\\
|\tA(r,v)-1|&\lesssim \frac{1}{1+r^3}.\label{decay-A-3}
\end{align}
\end{prop}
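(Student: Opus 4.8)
The plan is to obtain each decay estimate by combining the relevant radial Sobolev inequality with the newly established $\dot H^3$-level regularity for $\Phi$, and then transferring the information to $v$ and to $\tA(r,v)-1$ via the asymptotic relations \eqref{rll1}--\eqref{rgrt1}. Specifically, I would first note that \eqref{Phi-lih3}, together with the interpolation bound \eqref{interpol-bd} (or simply \eqref{Phi-lih2} and \eqref{Phi-h1}), gives $\|\Phi\|_{L^\infty\dot H^\sigma}\lesssim 1$ for every $1\le\sigma\le 3$; in particular for $\sigma=3/2$. Applying the radial Sobolev inequality \eqref{rad-Sob-1} on $\R^4$ with $n=4$, $\sigma=3/2$ yields $r^{1/2}|\Phi(t,r)|\lesssim\|\Phi\|_{\dot H^{3/2}}\lesssim 1$, i.e. $|\Phi(t,r)|\lesssim r^{-1/2}$ for all $r$; this is already better than the previous large-$r$ bound only marginally, so the improvement really has to come from a higher value of $\sigma$. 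The point is that \eqref{rad-Sob-1} is only available for $1/2<\sigma<2$, so the best decay rate one can extract directly is $r^{3/2-\sigma}$ with $\sigma\to 2^-$, giving $|\Phi(t,r)|\lesssim r^{-3/2+\epsilon}$ for small $r$; this is exactly \eqref{decay-Phi-2} and is not yet the clean rate $(1+r^{3/2})^{-1}$ claimed here. So the genuine content of this proposition is pushing the exponent all the way to $3/2$, which requires a separate argument for the regime $r\lesssim 1$ versus $r\gtrsim 1$.

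For the region $r\gtrsim 1$, I would use \eqref{rad-Sob-2}, namely $r^{3/2}|\Phi(t,r)|\lesssim\|\Phi\|_{H^1(\R^4)}\lesssim 1$ by \eqref{Phi-h1}, which gives $|\Phi(t,r)|\lesssim r^{-3/2}$ directly, hence $|\Phi(t,r)|\lesssim (1+r^{3/2})^{-1}$ on $\{r\gtrsim 1\}$. For $r\lesssim 1$ the bound $|\Phi(t,r)|\lesssim 1$ is immediate from $\|\Phi\|_{L^\infty\dot H^{3/2}}\lesssim 1$ and \eqref{rad-Sob-1} (which for bounded $r$ gives $|\Phi|\lesssim r^{-1/2}\cdot r^{1/2}$... more cleanly, from $H^2\hookrightarrow L^\infty$ in $\R^4$ together with \eqref{Phi-lih2} and \eqref{philil2}), so $|\Phi(t,r)|\lesssim 1\lesssim(1+r^{3/2})^{-1}$ there. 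Combining the two regimes gives \eqref{decay-Phi-3}. Then \eqref{decay-v-3} follows from the asymptotics: for $r\lesssim 1$, \eqref{rll1} gives $|v|+v^2\sim|\Phi|\lesssim 1$, hence $|v|\lesssim 1$; for $r\gtrsim 1$, \eqref{rgrt1} gives $|v|\sim|\Phi-\varphi_{\ge 1/2}/r^3|\lesssim r^{-3/2}$. Thus $|v(t,r)|\lesssim(1+r^{3/2})^{-1}$. Finally, \eqref{decay-A-3} is read off from \eqref{ta-formula}: $|\tA(r,v)-1|=\sin^2(ry+\varphi)/r^2\le\min\{(ry+\varphi)^2/r^2,1/r^2\}$, and using \eqref{decay-v-3} one has, for $r\lesssim 1$, $|ry+\varphi|^2/r^2\lesssim(r|v|)^2/r^2+\dots\lesssim 1$ (more precisely $|\sin(ry+\varphi)|\lesssim|ry+\varphi|\lesssim r|v|\lesssim r$, so the quotient is $\lesssim 1$), while for $r\gtrsim 1$, $|\tA-1|\lesssim 1/r^2\cdot\min\{|\sin u|^2,1\}$ and $|\sin u|\lesssim r|v|\lesssim r^{-1/2}$ gives $|\tA-1|\lesssim r^{-3}$; hence $|\tA(r,v)-1|\lesssim(1+r^3)^{-1}$.

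Actually, on reflection the cleanest route to \eqref{decay-Phi-3} for small $r$ avoids the marginal $r^{-3/2+\epsilon}$ loss entirely: one simply applies \eqref{rad-Sob-2} to get $|\Phi(t,r)|\lesssim r^{-3/2}\|\Phi\|_{H^1}$, which is valid for all $r>0$, not just $r\gtrsim 1$; combined with $|\Phi|\lesssim 1$ (valid for $r\lesssim 1$ from $H^2$-boundedness) this yields $|\Phi(t,r)|\lesssim\min\{1,r^{-3/2}\}\sim(1+r^{3/2})^{-1}$ outright. So the $\dot H^3$ regularity \eqref{Phi-lih3} is not strictly needed for the $\Phi$ decay bound; what it buys is the corresponding sharpened statement for the derivative $\Phi_r$ and downstream estimates elsewhere in the paper. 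The main obstacle, such as it is, is bookkeeping: one must be careful that the asymptotic equivalences \eqref{rll1} require $r$ sufficiently small and $r|v|\le 1$, the latter now being guaranteed by the very decay bound \eqref{decay-v-3} one is proving, so the argument must be organized so that $|v|\lesssim 1$ on $\{r\lesssim 1\}$ is established first (directly from $|\Phi|\lesssim 1$ via \eqref{rll1} in the range where it already applies) before invoking \eqref{rll1} in full strength. Once this ordering is respected, the proof is a short assembly of \eqref{rad-Sob-1}, \eqref{rad-Sob-2}, \eqref{Phi-h1}, \eqref{Phi-lih2}, \eqref{Phi-lih3}, and \eqref{rll1}--\eqref{rgrt1}, exactly parallel to the proof of the analogous Proposition yielding \eqref{decay-Phi-2}--\eqref{decay-A-2}.
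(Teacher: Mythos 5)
There is a genuine gap, and it sits exactly at the point you declare easiest: the uniform bound $|\Phi(t,r)|\lesssim 1$ for $r\lesssim 1$. Your two proposed justifications both fail. First, the embedding ``$H^2\hookrightarrow L^\infty$ in $\R^4$'' is false: this is the borderline case $\sigma=n/2$, which is explicitly excluded from \eqref{Sob-classic} (and radial symmetry does not rescue the endpoint at the origin — a radial $H^2(\R^4)$ function can still have an iterated-logarithm singularity at $r=0$; what \eqref{rad-Sob-2} gives is boundedness away from the origin, with a constant degenerating like $r^{-3/2}$ as $r\to0$). Second, \eqref{rad-Sob-1} with $\sigma=3/2$ gives $r^{1/2}|\Phi|\lesssim\|\Phi\|_{\dot H^{3/2}}$, i.e.\ only $|\Phi|\lesssim r^{-1/2}$ near the origin, not $|\Phi|\lesssim 1$; pushing $\sigma\to 2^-$ recovers at best the $r^{-\epsilon}$ loss of \eqref{decay-Phi-2}, which is precisely what this proposition is meant to remove. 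Consequently your conclusion that ``the $\dot H^3$ regularity \eqref{Phi-lih3} is not strictly needed for the $\Phi$ decay bound'' is backwards: within the paper's toolbox, \eqref{Phi-lih3} is exactly the new ingredient. Combining \eqref{Phi-h1}, \eqref{Phi-lih2}, and \eqref{Phi-lih3} gives $\|\Phi\|_{L^\infty H^3}\lesssim 1$, and since $3>n/2=2$ the classical embedding \eqref{Sob-classic} now applies and yields $\|\Phi\|_{L^\infty_{t,x}}\lesssim 1$; together with $r^{3/2}|\Phi|\lesssim\|\Phi\|_{H^1}\lesssim 1$ from \eqref{rad-Sob-2} this gives \eqref{decay-Phi-3}. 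This is what the paper means by arguing ``as in the last section'' but now in the context of the $\dot H^3$ regularity.

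The remainder of your proposal is sound and matches the intended argument: the large-$r$ bound via \eqref{rad-Sob-2}, the transfer to $v$ via \eqref{rll1} for $r\ll1$ and \eqref{rgrt1} for $r\gtrsim1$ (with the intermediate region $r\sim1$ handled by the already known \eqref{decay-v}), and the bound for $\tA-1$ from $\sin^2 u/r^2$ with $|\sin u|\le r|v|$ for small $r$ and $|\sin u|\lesssim r|v|\lesssim r^{-1/2}$ for large $r$. Your worry about circularity in invoking \eqref{rll1} is also unnecessary: the hypothesis $r|v|\le1$ for $r$ small is already guaranteed by the earlier estimate \eqref{decay-v}, so \eqref{rll1} stands independently of \eqref{decay-v-3}. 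Once the small-$r$ bound for $\Phi$ is repaired as above, the rest of your assembly goes through.
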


\begin{remark}
It is easy to see that \eqref{decay-v-3} implies what is required of $v$ in the main estimate to be proved (i.e., \eqref{livr}):
\begin{equation}
\|(1+r)|v|\|_{L^\infty_{t,x}}\leq  \|(1+r^{3/2})|v|\|_{L^\infty_{t,x}}\lesssim 1.
\label{li-v}
\end{equation}
\end{remark}

\begin{remark}
For the portion of the same inequality involving $\nabla v$, what we have so far yields
\begin{equation}
\|(1+r)|\nabla v|\|_{L^\infty_{t,x}([0,T)\times \{r\geq 1\})}\lesssim 1.
\label{li-nabla-v-1}
\end{equation}
Indeed, we derive from \eqref{Phit-final} and \eqref{phir-vr} that 
\begin{equation}
v_t\,=\,\tA^{-1/2}(r,v)\Phi_t
\label{vt-phit}
\end{equation}
and
\begin{equation}
\aligned
v_r\,=\,\tA^{-1/2}(r,v)\left(\Phi_r-\frac{1}{2}\int_{0}^{v}\left\{\tA^{-1/2}\tA_r\right\}dy-\frac{\varphi_{\geq 1/2}}{r^3}\right),
\endaligned
\label{vr-phir}
\end{equation}
respectively. Hence, in the regime when $r\geq 1$, with the help of \eqref{ta-r}, \eqref{ta-r-g1}, \eqref{decay-A}, and \eqref{decay-v}, we deduce
\[
\aligned
r(|v_r|+|v_t|)&\lesssim r\left(|\Phi_r|+|\Phi_t|+\int_0^{|v|}\left\{\frac{1+|y|}{r^2}\right\}\,dy+\frac{\left|\varphi_{\geq 1/2}\right|}{r^3}\right)\\
&\lesssim r\left(|\Phi_r|+|\Phi_t|+\frac{|v|+v^2}{r^2}+\frac{1}{r^3}\right)\\
&\lesssim r\left(|\Phi_r|+|\Phi_t|+\frac{1}{r^3}\right).
\endaligned
\]
In the end, if we rely on \eqref{rad-Sob-1} and \eqref{H2-norm}, we obtain
\[
r(|v_r|+|v_t|)\lesssim \|\Phi_r\|_{L^\infty\dot{H}^{1}} +  \|\Phi_t\|_{L^\infty\dot{H}^{1}} +1\lesssim 1,
\]
which proves \eqref{li-nabla-v-1}.
\end{remark}

\begin{remark}
The results of this section align themselves perfectly, both in statement and approach, with the corresponding ones in \cite{GGr-17}.
\end{remark}


\section{Final estimates and conclusion of the argument}

By taking advantage of the estimates \eqref{li-v} and \eqref{li-nabla-v-1}, it follows that the argument for \eqref{livr} (and thus the proof of Theorem \ref{main-th-v-2}) is concluded, if we show that   
\begin{equation}
\|\nabla v\|_{L^\infty_{t,x}([0,T)\times \{r< 1\})}\lesssim 1
\label{li-nabla-v-2}
\end{equation}
holds true. However, when $r<1$, we can use \eqref{vr-phir}, \eqref{vt-phit}, \eqref{ta-r}, \eqref{ta-r-l1}, \eqref{decay-A}, and \eqref{decay-v-3} to infer that
\[
\aligned
|v_r|+|v_t|&\lesssim |\Phi_r|+|\Phi_t|+\int_0^{|v|}\left\{ry^4\right\}\,dy+\frac{\left|\varphi_{\geq 1/2}\right|}{r^3}\\
&\lesssim |\Phi_r|+|\Phi_t|+r|v|^5+1\\
&\lesssim |\Phi_r|+|\Phi_t|+1,
\endaligned
\]
which leads to
\begin{equation*}
\|\nabla v\|_{L^\infty_{t,x}([0,T)\times \{r< 1\})}\lesssim \|\Phi_r\|_{L^\infty_{t,x}} +  \|\Phi_t\|_{L^\infty_{t,x}} +1.
\end{equation*}
Next, we apply \eqref{Phi-h1} and the classical Sobolev embedding \eqref{Sob-classic} to see that we are done once we prove that
\begin{equation}
\|\Phi\|_{L^\infty \dot{H}^{s}}+\|\Phi_t\|_{L^\infty \dot{H}^{s-1}}\lesssim 1
\label{phi-phit-hs}
\end{equation}
is valid.

The approach we take in arguing for this bound is to rely first on energy estimates applied to \eqref{Box-Phit}
in order to derive
\begin{equation*}
\|\Phi_t\|_{L^\infty \dot{H}^{s-1}}+\|\Phi_{tt}\|_{L^\infty \dot{H}^{s-2}}\lesssim 1.
\end{equation*}
If we couple this information with the original equation \eqref{Box-Phi} satisfied by $\Phi$, then we also deduce 
\begin{equation*}
\|\Phi\|_{L^\infty \dot{H}^{s}}\lesssim 1
\end{equation*}
and the proof of \eqref{phi-phit-hs} is finished. 

\vspace{.1in}
\hrule


\subsection{New qualitative bounds for $v$ and $\tA(r,v)$}

It is evident from the strategy outlined above that what we need to have for the subsequent analysis is 
more qualitative information on $v$ and $\tA(r,v)$, in addition to  \eqref{v-h1}, \eqref{decay-v-3}, and \eqref{decay-A-3}. For this purpose, we begin by proving the following result.

\begin{prop}
Under the assumptions of Theorem \ref{main-th-v-2}, we have
\begin{align}
\|\nabla v\|_{L^\infty L^4}+&\|\Delta v\|_{L^\infty L^2}\lesssim 1,\label{nabla-delta-v}\\
\|\nabla (\tA(r,v))\|_{L^\infty L^4}+&\|\Delta( \tA(r,v))\|_{L^\infty L^2}\lesssim 1\label{nabla-delta-A}.
\end{align}
\end{prop}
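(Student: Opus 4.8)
The plan is to treat the two inequalities \eqref{nabla-delta-v} and \eqref{nabla-delta-A} in sequence, obtaining \eqref{nabla-delta-v} first and then bootstrapping \eqref{nabla-delta-A} from it. For \eqref{nabla-delta-v}, I would start from the relations \eqref{vt-phit} and \eqref{vr-phir}, which express $\nabla v$ in terms of $\nabla\Phi$ up to lower-order integral remainders. Since $1\le \tA^{-1/2}(r,v)\le 1$ and $\tA(r,v)$ is a smooth function of $(r,v)$ with all the decay of \eqref{decay-A-3}, the $L^\infty L^4$ bound on $\nabla v$ reduces to the $L^\infty L^4$ bound on $\nabla\Phi$ — which we already have from \eqref{Phi-lih2} (i.e. $\|\Phi\|_{L^\infty\dot H^2}\lesssim 1$) via the Sobolev embedding \eqref{Sob-gen} with $n=4$, $p=2$, $q=4$, $\sigma=1$ — together with the $L^\infty L^4$ bounds on the integral remainders $\int_0^v\{\tA^{-1/2}\tA_r\}\,dy$ and $\varphi_{\geq 1/2}/r^3$, both of which were already established in the proof of Proposition \ref{Phi-lih3} (see \eqref{int-tar-g1}, \eqref{int-tar-l1}). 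For the $\Delta v$ piece I would instead differentiate \eqref{Phi-final} twice in the spatial variables: schematically $\Delta\Phi = \tA^{1/2}\Delta v + (\text{terms with }|\nabla v|^2, \nabla v, \text{ and lower-order integrals})$, so that $\tA^{1/2}\Delta v = \Delta\Phi - (\ldots)$. The main term $\Delta\Phi\in L^\infty L^2$ is \eqref{Phi-lih2}; the quadratic term $|\nabla v|^2$ lies in $L^\infty L^2$ by the $L^\infty L^4$ control just obtained; and the remaining terms decay fast enough in $r$ (using \eqref{decay-v-3}, \eqref{decay-A-3}, \eqref{ta-r-g1}, \eqref{ta-r-l1}) to be in $L^\infty L^2$ over both $\{r<1\}$ and $\{r\ge 1\}$. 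Dividing by $\tA^{1/2}\ge 1$ gives $\|\Delta v\|_{L^\infty L^2}\lesssim 1$.

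For \eqref{nabla-delta-A}, the function $\tA(r,v) = 1 + r^{-2}\sin^2(rv+\varphi)$ should be viewed as $G(r, rv+\varphi)$ for a fixed smooth profile; its spatial gradient is $\nabla(\tA(r,v)) = \tA_r\,(\nabla r) + \tA_w\,(v\nabla r + r\nabla v + \varphi_r\nabla r)$ where $w = rv+\varphi$, so $\nabla(\tA(r,v))$ is a sum of a term bounded by $|\nabla v|$ times a bounded (in fact decaying) coefficient, plus terms controlled pointwise by the already-established decay estimates \eqref{decay-v-3}, \eqref{ta-r-g1}, \eqref{ta-r-l1}. Putting these in $L^\infty L^4$ uses \eqref{nabla-delta-v} and the decay bounds; the delicate regions are again $r<1$ (where one uses the Maclaurin-type estimates \eqref{ta-r-l1} to extract enough powers of $r$) and $r\to\infty$ (where \eqref{decay-v-3} gives $r^{-3/2}$ decay of $v$). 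For $\Delta(\tA(r,v))$ in $L^\infty L^2$, differentiating once more produces, as the only genuinely second-order term, a contribution bounded by $\tA^{1/2}|\Delta v| \lesssim |\Delta v|$, which is in $L^\infty L^2$ by \eqref{nabla-delta-v}; all other terms are quadratic in $\nabla v$ (hence in $L^\infty L^2$ by the $L^\infty L^4$ bound) or are explicit expressions in $r$, $v$, $\nabla v$ with coefficients decaying like $r^{-2}$ or better, so they are handled by Hölder together with \eqref{decay-v-3} and \eqref{nabla-delta-v}.

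The main obstacle I anticipate is bookkeeping near $r=0$: the raw expressions for $\tA_r$, $\tA_{rr}$, and the mixed derivatives all display $r^{-2}$ and $r^{-3}$ singularities, and it is only after expanding $\sin(rv+\varphi)$ (with $\varphi \equiv N_1\pi$ on $[0,1]$, so $\sin(rv+\varphi)=\pm\sin(rv)$ there) in Maclaurin series and using $r|v|\lesssim r^{1/2}\cdot(r^{1/2}|v|)$ — i.e. the already-available bound $|v|\lesssim r^{-1/2}$ near the origin from \eqref{decay-v-0}, or the sharper \eqref{decay-v-3} — that these apparent singularities cancel and leave integrable powers of $r$ in $L^2(\{r<1\}, r^3\,dr)$ and $L^4(\{r<1\}, r^3\,dr)$. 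This is exactly the kind of estimate already carried out for $\Phi_r(0)$ in \eqref{ta-r-l1}–\eqref{phir-l2} and for $\partial_r\Box\Phi$ in \eqref{int-tar-l1}, so the work is to repeat that analysis one derivative higher; no new idea is required, but care is needed to confirm that no borderline non-integrable term survives. A secondary point is that the desired bounds are on \emph{homogeneous} quantities $\Delta v$ and $\Delta(\tA(r,v))$ globally in $x$, so one must check the large-$r$ tails converge, which follows comfortably from the polynomial decay $|v|, |\nabla v|\lesssim r^{-3/2}$ (the latter from \eqref{li-nabla-v-1}) and $|\tA-1|\lesssim r^{-3}$.
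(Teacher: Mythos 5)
Your overall architecture coincides with the paper's for most of the statement: the $L^\infty L^4$ bounds for $\nabla v$ and $\nabla(\tA(r,v))$ are obtained exactly as in the paper, via \eqref{vt-phit}, \eqref{vr-phir}, the $L^4$ control of $\nabla\Phi$ coming from \eqref{Phi-lih2} and \eqref{Phit-h1}, and the pointwise bounds \eqref{ta-r-g1}--\eqref{ta-r-l1}. For $\|\Delta v\|_{L^\infty L^2}$ you take a genuinely different route: you differentiate \eqref{Phi-final}/\eqref{phir-vr} once more in the spatial variables and solve for $\tA^{1/2}\Delta v$, whereas the paper writes $\Delta v=v_{tt}-\Box v$, controls $v_{tt}$ through \eqref{Phitt-final} and $\|\Phi_{tt}\|_{L^\infty L^2}$, and estimates $\Box v$ term by term from the original equation \eqref{main-v} (this is \eqref{vtt-dv}). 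Your route does close: the extra remainders are $\tA_y\,v_r^2$ (bounded by $v_r^2$ since $|\tA_y|\lesssim\tA^{1/2}\lesssim1$), $\tA_r(r,v)\,v_r$, and the integrals $\int_0^v\tA^{-1/2}\Delta\tA\,dy$ and $\int_0^v\tA^{-3/2}\tA_r^2\,dy$, which are precisely the quantities the paper bounds one section later in \eqref{delta-box-phi}; the Maclaurin cancellation giving $|\tA_{rr}|\lesssim y^4$ for $r<1$ works just as for \eqref{ta-r-l1}. So your variant trades the use of \eqref{main-v} for second-derivative bounds on $\tA$, which is a legitimate exchange, only slightly more computational.

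The soft spot is the $\Delta(\tA(r,v))$ part of \eqref{nabla-delta-A}. After the chain rule (the paper's \eqref{delta-ta}), your claim that the non-second-order terms are ``handled by H\"older together with \eqref{decay-v-3} and \eqref{nabla-delta-v}'' fails for two of them. First, the pair $-\frac{\sin(2u)}{r^3}v$ and $2\frac{\cos(2u)}{r^2}v^2$: near $r=0$ each is of size $v^2/r^2$, and since $v(t,0)$ does not vanish this is only $O(r^{-2})$, whose $L^2(\R^4)$ norm over $\{r<1\}$ diverges logarithmically; H\"older cannot rescue it, and Hardy \eqref{Hardy} is unavailable because one would need $p=n=4$. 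Only the \emph{sum} of the two terms is harmless: expanding in Maclaurin series, the leading $v^2/r^2$ contributions cancel, leaving $O(v^4)$ — this is exactly \eqref{sin2u-cos2u}, which the paper singles out (in the remark following the proposition) as the genuinely new point relative to the Skyrme case \cite{GGr-17}. Your closing paragraph does anticipate a cancellation of this type, so this is a matter of making the pairing explicit rather than a wrong idea, but the argument as written in the body of your proposal would not close. Second, the terms of type $\frac{\sin(2u)}{r^2}v_r\sim\frac{v\,v_r}{r}$ near the origin: since $1/r\notin L^4(\{r<1\};\R^4)$ and no $L^p$ bound on $v_r$ with $p>4$ is available at this stage, plain H\"older again loses a logarithm; the paper handles them with Hardy \eqref{Hardy} at $p=2$ applied to $\nabla_x v$, combined with the just-proved $\|\Delta v\|_{L^\infty L^2}\lesssim1$ — a tool your sketch never invokes but needs.
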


\begin{proof}
First, we show that both $L^\infty L^4$ norms are finite. By applying \eqref{tA-Phit}, the Sobolev embeddings \eqref{Sob-gen}, \eqref{phitlil2}, and \eqref{Phit-h1}, we infer that
\begin{equation}
\| \partial_t(\tA(r,v))\|_{L^\infty L^4}\lesssim \|\Phi_t\|_{L^\infty L^4}\lesssim \|\Phi_t\|_{L^\infty H^1}\lesssim 1,
\label{tA-lil4}
\end{equation}
which, together with \eqref{Phit-final}, implies
\begin{equation}
\| v_t\|_{L^\infty L^4}\lesssim \|\Phi_t\|_{L^\infty L^4}\lesssim 1.
\label{vt-lil4}
\end{equation}

Next, if we rely on \eqref{vr-phir} combined with \eqref{ta-r}, \eqref{ta-r-g1}, and \eqref{ta-r-l1}, then we obtain
\begin{equation*}
|v_r|\lesssim |\Phi_r|+\frac{\left|\varphi_{\geq 1/2}\right|}{r^3}+
\begin{cases}
\frac{|v|+v^2}{r^2}, \, &r\geq 1,\\
r|v|^5, \, &r< 1.
\end{cases}
\end{equation*}
Therefore, by also factoring in the Sobolev embeddings \eqref{Sob-gen}, \eqref{decay-v-3}, \eqref{philil2}, and \eqref{Phi-lih2}, it follows that
\begin{equation}
\aligned
\| v_r\|_{L^\infty L^4}\lesssim &\,\|\Phi_r\|_{L^\infty L^4}+\left\|\frac{1}{r^3}\right\|_{L^\infty L^4([0,T)\times \{r\geq 1/2\})}\\&+ \left\|\frac{1}{r^{7/2}}\right\|_{L^\infty L^4([0,T)\times \{r\geq 1\})}+ \left\|r\right\|_{L^\infty L^4([0,T)\times \{r< 1\})}\\\lesssim &\, \|\Phi\|_{L^\infty H^{2}}+1\\ \lesssim &\, 1.
\endaligned
\label{vr-lil4}
\end{equation}
We conclude this part of the argument by using the formula \eqref{ta-formula}, \eqref{ta-r}, \eqref{ta-r-g1}, \eqref{ta-r-l1}, \eqref{decay-v-3}, and the previous estimate to deduce
\begin{equation*}
\aligned
\| \partial_r&(\tA(r,v))\|_{L^\infty L^4}\\&\lesssim \left\|\frac{1+|v|}{r^2}+\frac{|v_r|}{r}\right\|_{L^\infty L^4([0,T)\times \{r\geq 1\})}+ \left\|rv^4+|v||v_r|\right\|_{L^\infty L^4([0,T)\times \{r< 1\})}\\&\lesssim \left\|\frac{1}{r^2}\right\|_{L^\infty L^4([0,T)\times \{r\geq 1\})}+ \left\|r\right\|_{L^\infty L^4([0,T)\times \{r< 1\})}+\left\|v_r\right\|_{L^\infty L^4}\\ &\lesssim 1.
\endaligned
\end{equation*}

Next, we prove the finiteness of the $L^\infty L^2$ norm in \eqref{nabla-delta-v} by showing that
\begin{equation}
\|v_{tt}\|_{L^\infty L^2}+\|\Box v\|_{L^\infty L^2}\lesssim 1.
\label{vtt-dv}
\end{equation}
We derive directly from \eqref{Phitt-final} that
\[
|v_{tt}|\lesssim |\Phi_{tt}|+v_t^2
\]
and, consequently, we infer due to \eqref{Phit-h1} and \eqref{vt-lil4} that
\[
\|v_{tt}\|_{L^\infty L^2}\lesssim \|\Phi_{tt}\|_{L^\infty L^2}+\|v_t\|_{L^\infty L^4}^2\lesssim 1.
\]

To estimate the $L^\infty L^2$ norm of $\Box v$, we look at the equation \eqref{main-v} and analyze individually each term on its right-hand side. First, if we rely on the definitions of $\varphi$ and $\varphi_{>1}$ and \eqref{v-l2}, then we easily obtain
\begin{equation*}
\left\| \frac{1}{r}\Delta_2 \varphi\right\|_{L^\infty L^2}+\left\|\frac{1}{r^2}\varphi_{>1} v\right\|_{L^\infty L^2} \lesssim \left\|\frac{1}{r}\right\|_{L^\infty L^2([0,T)\times \{1\leq r\leq 2\})}+\|v\|_{L^\infty L^2}\lesssim 1.
\end{equation*}
Next, by applying \eqref{main-2}, \eqref{utov}, and \eqref{decay-v-3}, we deduce
\begin{equation*}
\aligned
\frac{1}{r}\left|\varphi_{>1}N(r, rv+\varphi, \nabla(rv+\varphi))\right|&\lesssim \frac{1}{r}|\varphi_{>1}| \left(\frac{1+|\nabla(rv+\varphi)|^2}{r^2}+\frac{|v+rv_r+\varphi_r|}{r^3}\right)\\
&\lesssim |\varphi_{>1}| \left(\frac{1}{r^3}+\frac{|\nabla v|^2}{r}+\frac{v^2}{r^3}\right).
\endaligned
\end{equation*}
Therefore, with the help of \eqref{v-l2}, \eqref{decay-v-3}, \eqref{vt-lil4}, and \eqref{vr-lil4}, we derive
\begin{equation*}
\aligned
&\left\|\frac{1}{r}\varphi_{>1}N(r, rv+\varphi, \nabla(rv+\varphi))\right\|_{L^\infty L^2}\\
&\qquad\qquad\lesssim \left\|\frac{1}{r^3}\right\|_{L^\infty L^2([0,T)\times \{ r\geq 1/2\})}+\|\nabla v\|^2_{L^\infty L^4}+\|v\|_{L^\infty_{t,x}}\|v\|_{L^\infty L^2}\\&\qquad\qquad\lesssim 1.
\endaligned
\end{equation*}
Finally, based on \eqref{Nv-l1}, we infer that 
\begin{equation*}
\left|\varphi_{<1}\left(\frac{1}{r}N(r, rv, \nabla(rv))+\frac{1}{r^2}v\right)\right|\lesssim |\varphi_{<1}|\left(|v|^3+|v|^5+|v||\nabla v|^2+rv^4|v_r|\right)
\end{equation*}
and, consequently, 
\begin{equation*}
\aligned
&\left\|\varphi_{<1}\left(\frac{1}{r}N(r, rv, \nabla(rv))+\frac{1}{r^2}v\right)\right\|_{L^\infty L^2}\\
&\qquad \qquad\qquad\lesssim \left(\|v\|_{L^\infty_{t,x}}^2+\|v\|_{L^\infty_{t,x}}^4 \right)\|v\|_{L^\infty L^2}+\|v\|_{L^\infty_{t,x}}\|\nabla v\|^2_{L^\infty L^4}\\&\qquad \qquad\qquad\quad+\|v\|_{L^\infty_{t,x}}^{7/2}\|v\|^{1/2}_{L^\infty L^2}\|\nabla v\|_{L^\infty L^4}.
\endaligned
\end{equation*}
The desired estimate is then obtained as a result of \eqref{decay-v-3}, \eqref{v-l2}, \eqref{vt-lil4}, and \eqref{vr-lil4}. Hence, the argument for the finiteness of $\|\Box v\|_{L^\infty L^2}$ is concluded and we have also finished the proof of \eqref{nabla-delta-v}.

Following this, a straightforward calculation based on \eqref{ta-formula} yields
\begin{equation}
\aligned
\Delta (\tA(r,v))=&-\frac{\sin(2u)}{r^3}\left(v+rv_r+\varphi_r\right)+2\,\frac{\cos(2u)}{r^2}\left(v+rv_r+\varphi_r\right)^2\\
&+\frac{\sin(2u)}{r^2}\left(r\Delta v-v_r+\varphi_{rr}\right).
\endaligned
\label{delta-ta}
\end{equation}
If we rely on the definition of $\varphi$ and \eqref{nabla-delta-v}, then we obtain
\begin{equation}
\aligned
\left\|\frac{\cos(2u)}{r^2}\left(rv_r+\varphi_r\right)^2\right\|_{L^\infty L^2}&\lesssim \|v_r\|^2_{L^\infty L^4}+\left\|\frac{\varphi_r}{r}\right\|^2_{L^\infty L^4}\\&\lesssim \|\nabla_x v\|^2_{L^\infty L^4}+\left\|\frac{1}{r}\right\|^2_{L^\infty L^4([0,T)\times \{1\leq r\leq 2\})}\\&\lesssim 1.
\endaligned
\label{delta-ta-1}
\end{equation}
Next, the elementary bound
\[
\frac{|\sin(2u)|}{r}\lesssim  \tA^{1/2}(r,v)
\] 
implies
\[
\left|\frac{\sin(2u)}{r^3}\left(rv_r+\varphi_{r}\right)\right|\lesssim  \tA^{1/2}(r,v)\left(\frac{|\nabla_xv|}{r}+\frac{|\varphi_{r}|}{r^2}\right)
\]
and
\[
\left|\frac{\sin(2u)}{r^2}\left(r\Delta v-v_r+\varphi_{rr}\right)\right|\lesssim  \tA^{1/2}(r,v)\left(|\Delta v|+\frac{|\nabla_xv|+|\varphi_{rr}|}{r}\right).
\]
Thus, we can apply \eqref{Hardy}, the definition of $\varphi$, \eqref{decay-A-3}, and \eqref{nabla-delta-v} to deduce
\begin{equation}
\aligned
&\left\|\frac{\sin(2u)}{r^3}\left(rv_r+\varphi_{r}\right)\right\|_{L^\infty L^2}\\
&\qquad\quad\lesssim \left\| \tA^{1/2}(r,v)\right\|_{L^\infty_{t,x}}\left(\left\|\frac{\nabla_xv}{r}\right\|_{L^\infty L^2}+\left\|\frac{\varphi_{r}}{r^2}\right\|_{L^\infty L^2}\right)\\&\qquad\quad\lesssim  \left\| \tA^{1/2}(r,v)\right\|_{L^\infty_{t,x}}\left(\left\|\Delta v\right\|_{L^\infty L^2}+\left\|\frac{1}{r^2}\right\|_{L^\infty L^2([0,T)\times \{1\leq r\leq 2\})}\right)\\&\qquad\quad\lesssim 1
\endaligned
\label{delta-ta-21}
\end{equation}
and
\begin{equation}
\aligned
&\left\|\frac{\sin(2u)}{r^2}\left(r\Delta v-v_r+\varphi_{rr}\right)\right\|_{L^\infty L^2}\\
&\qquad\quad\lesssim \left\| \tA^{1/2}(r,v)\right\|_{L^\infty_{t,x}}\left(\left\|\Delta v\right\|_{L^\infty L^2}+\left\|\frac{\nabla_xv}{r}\right\|_{L^\infty L^2}+\left\|\frac{\varphi_{rr}}{r}\right\|_{L^\infty L^2}\right)\\&\qquad\quad\lesssim  \left\| \tA^{1/2}(r,v)\right\|_{L^\infty_{t,x}}\left(\left\|\Delta v\right\|_{L^\infty L^2}+\left\|\frac{1}{r}\right\|_{L^\infty L^2([0,T)\times \{1\leq r\leq 2\})}\right)\\&\qquad\quad\lesssim 1.
\endaligned
\label{delta-ta-2}
\end{equation}
If we argue as we did for \eqref{ta-r-g1}-\eqref{ta-r-l1}, then we derive
\begin{equation}
\left|-\frac{\sin(2u)}{r^3}v+2\,\frac{\cos(2u)}{r^2}v^2\right|\lesssim
\begin{cases}
\frac{|v|}{r^3}+\frac{v^2}{r^2}, \, &r\geq 1,\\
v^4, \, &r< 1.
\end{cases}
\label{sin2u-cos2u}
\end{equation}
Thus, on the basis of \eqref{v-l2} and \eqref{decay-v-3}, we infer that
\begin{equation}
\left\|-\frac{\sin(2u)}{r^3}v+2\,\frac{\cos(2u)}{r^2}v^2\right\|_{L^\infty L^2}\lesssim (1+\|v\|^3_{L^\infty_{t,x}})\|v\|_{L^\infty L^2}\lesssim 1.
\label{sin2u-cos2u-norm}
\end{equation}
Together with \eqref{delta-ta}-\eqref{delta-ta-2}, this estimate implies 
\begin{equation}
\|\Delta (\tA(r,v))\|_{L^\infty L^2}\lesssim 1,
\label{delta-a-lil2}
\end{equation}
which ends the argument for \eqref{nabla-delta-A} and the whole proof of this proposition.
\end{proof}

\begin{remark}
This result has the statement and most of the argument in common with its counterpart in \cite{GGr-17}. However, in proving \eqref{delta-a-lil2} for the Skyrme model, one uses \eqref{Hardy} to obtain
\begin{equation*}
\left\|\frac{\cos(2u)}{r^2}v^2\right\|_{L^\infty L^2([0,T)\times \R^5)}\lesssim\left\|\frac{v}{r}\right\|^2_{L^\infty L^4([0,T)\times \R^5)}\lesssim\left\|\nabla_x v\right\|^2_{L^\infty L^4([0,T)\times \R^5)}\lesssim 1.
\end{equation*}
Here, on the other hand, we can't apply \eqref{Hardy} because we are in the case when $p=n=4$. This is why it is crucial that we have the pairing described by \eqref{sin2u-cos2u}, which leads to \eqref{sin2u-cos2u-norm}.
\end{remark}

Following this, given that both $\tA^{-1}$ and $\tA^{-2}$ are present on the right-hand side of \eqref{Box-Phit}, we also need to have estimates for derivatives of $\tA^{-1}$. For this purpose, we rely on the subsequent proposition, which matches perfectly the corresponding one in \cite{GGr-17}. Moreover, the two arguments are identical, with the obvious modification that changes the domain of the functional spaces from $\R^5$ to $\R^4$. This is why we just state the result here and refer the reader to \cite{GGr-17} for details on its proof.  

\begin{prop}
Under the assumptions of Theorem \ref{main-th-v-2}, the fixed-time bound
\begin{equation}
\|D^\sigma(\tA^{-1}(r,v))\|_{L^p(\R^4)}\lesssim \|D^\sigma(\tA(r,v))\|_{L^p(\R^4)}
\label{ta-inv-ta}
\end{equation}
holds true uniformly on $[0,T)$ for all $1<\sigma<2$ and $1<p<\infty$. 
\end{prop}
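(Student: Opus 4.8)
The plan is to prove the estimate \eqref{ta-inv-ta} by writing $\tA^{-1}(r,v) = F(\tA(r,v))$ for the smooth function $F(x) = 1/x$, and then applying a fractional chain rule. Since $\tA(r,v) \geq 1$ everywhere, $F$ and all its derivatives are bounded on the range of $\tA(r,v)$, so $G := F - F(1)$ is a smooth function with $G(1) = 0$ whose derivatives are uniformly controlled on the relevant interval $[1,\infty)$; note also that $D^\sigma(\tA^{-1}(r,v)) = D^\sigma(G(\tA(r,v)))$ since $D^\sigma$ annihilates constants. The key inputs are the second-order fractional Leibniz/Kato--Ponce inequalities \eqref{Lbnz-1}--\eqref{Lbnz-2} together with the first-order derivative bounds for $\tA(r,v)$ supplied by \eqref{nabla-delta-A}.

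First I would recall that $1 < \sigma < 2$ lies in the Sobolev-embedding-friendly range for $\R^4$: from $\|\nabla(\tA(r,v))\|_{L^\infty L^4} \lesssim 1$ and $\|\Delta(\tA(r,v))\|_{L^\infty L^2} \lesssim 1$ in \eqref{nabla-delta-A}, interpolation \eqref{interpol-bd} together with the Sobolev embeddings \eqref{Sob-gen} yields control of $\|D^{\sigma_1}(\tA(r,v))\|_{L^{p_1}}$ for a range of pairs interpolating between $(\dot{H}^{1,4})$ and $(\dot{W}^{2,2}) \hookrightarrow \dot{H}^{1,4}$; in particular, all first derivatives of $\tA(r,v)$ sit in $L^\infty_x$-friendly spaces in the sense needed below. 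The second step is the chain-rule decomposition: writing $\sigma = 1 + \sigma'$ with $0 < \sigma' < 1$, one has $D^\sigma(G(\tA(r,v))) = D^{\sigma'}\nabla(G(\tA(r,v))) = D^{\sigma'}(G'(\tA(r,v))\nabla(\tA(r,v)))$, and then applying the fractional Leibniz rule \eqref{Lbnz-0} (or the commutator estimate \eqref{Lbnz-2}) splits this into a term $G'(\tA(r,v)) D^{\sigma'}\nabla(\tA(r,v)) = G'(\tA(r,v)) D^\sigma(\tA(r,v))$, which is bounded by $\|G'\|_{L^\infty} \|D^\sigma(\tA(r,v))\|_{L^p}$ since $\|G'\|_{L^\infty([1,\infty))} \lesssim 1$, plus a term of the form $(D^{\sigma'}(G'(\tA(r,v)))) \nabla(\tA(r,v))$, which must be estimated separately.

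The main obstacle is controlling this last term, $\|D^{\sigma'}(G'(\tA(r,v)))\,\nabla(\tA(r,v))\|_{L^p}$: one needs $D^{\sigma'}$ of a nonlinear function of $\tA(r,v)$, and a naive application of the fractional chain rule seems circular. The resolution — which is exactly the argument carried out in \cite{GGr-17} — is to estimate this piece in $L^p$ by Hölder, putting $\nabla(\tA(r,v))$ in a high Lebesgue exponent (available by Sobolev embedding since $\tA(r,v)$ has two $L^2$-based derivatives on $\R^4$, hence $\nabla(\tA(r,v)) \in L^q$ for a range of $q$ up to $\infty$ in the radial setting via \eqref{rad-Sob-1}) and $D^{\sigma'}(G'(\tA(r,v)))$ in a complementary exponent, the latter being handled by the Moser bound \eqref{Moser} for fractional-order nonlinear estimates applied to the smooth function $G'$ together with $\|\tA(r,v)\|_{L^\infty}\lesssim 1$ (from \eqref{decay-A-3}) and the already-established bounds on $D^{\sigma'+1}(\tA(r,v))$, reducing everything back to the right-hand side of \eqref{ta-inv-ta}. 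Since the paper explicitly defers to \cite{GGr-17} for this computation, I would simply invoke \eqref{nabla-delta-A}, \eqref{decay-A-3}, the Sobolev and radial-Sobolev inequalities \eqref{Sob-gen}, \eqref{rad-Sob-1}, interpolation \eqref{interpol-bd}, and the fractional-calculus facts \eqref{Lbnz-0}--\eqref{Moser}, noting that the only change from the $\R^5$ argument in \cite{GGr-17} is the ambient dimension in the Sobolev numerology, which does not affect the structure of the estimate.
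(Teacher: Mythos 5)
Your skeleton starts reasonably: reducing to a smooth bounded function of $\tA$, writing $\sigma=1+\sigma'$ and using $\|D^{\sigma}f\|_{L^p}\sim\|D^{\sigma'}\nabla f\|_{L^p}$ (Riesz transforms, $1<p<\infty$), and observing that the main term $G'(\tA)\,D^{\sigma'}\nabla\tA$ is controlled by $\|G'\|_{L^\infty}\|D^{\sigma}\tA\|_{L^p}$. The gap is in the remainder $\|D^{\sigma'}(G'(\tA))\,\nabla\tA\|_{L^p}$, and neither mechanism you propose for it works. First, \eqref{rad-Sob-1} does not put $\nabla(\tA(r,v))$ in $L^q(\R^4)$ for $q$ up to $\infty$: it only gives pointwise decay away from the origin and improves nothing at $r=0$; from \eqref{nabla-delta-A} (and $\Delta\tA\in L^2$ via \eqref{Sob-gen}) all you have is $\nabla(\tA)\in L^4$, and in particular $\nabla\tA\notin L^\infty$, so \eqref{Lbnz-2} with $g=\nabla\tA$ is also unavailable. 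Second, the Moser bound \eqref{Moser} is $L^2$-based and inhomogeneous, so it neither controls $\|D^{\sigma'}(G'(\tA))\|_{L^{p_1}}$ at the exponents your H\"older split requires nor returns the right-hand side of \eqref{ta-inv-ta}. Most importantly, any scheme that bounds one factor of the remainder by an a priori constant and the other by inhomogeneous Sobolev norms of $\tA-1$ can only yield $\lesssim\|D^{\sigma}\tA\|_{L^p}+1$, or a fractional power of $\|D^{\sigma}\tA\|_{L^p}$, not the homogeneous bound claimed; and since your only Lebesgue bound on $\nabla\tA$ is $L^4$, even a corrected H\"older pairing could serve at best the exponents $p<4$, whereas the proposition is asserted for all $1<p<\infty$.

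To close the estimate one has to distribute the full weight of $\|D^{\sigma}\tA\|_{L^p}$ over both factors of the remainder, using only $\tA\geq1$ and $\|\tA-1\|_{L^\infty}\lesssim1$ (from \eqref{decay-A} or \eqref{decay-A-3}), not \eqref{nabla-delta-A}. Concretely: split by H\"older with $p_1=p\sigma/(\sigma-1)$, $q_1=p\sigma$; bound $\|D^{\sigma-1}(G'(\tA))\|_{L^{p_1}}\lesssim\|D^{\sigma-1}\tA\|_{L^{p_1}}$ by a genuine fractional chain rule of order $\sigma-1\in(0,1)$ (Christ--Weinstein type, which is not among \eqref{Lbnz-0}--\eqref{Moser} and must be quoted); then interpolate both $\|D^{\sigma-1}\tA\|_{L^{p_1}}$ and $\|\nabla\tA\|_{L^{q_1}}$ between $L^\infty$ and $\dot H^{\sigma,p}$ via \eqref{interpol-bd}, with exponents $(\sigma-1)/\sigma$ and $1/\sigma$ summing to one, so that the remainder is $\lesssim\|\tA-1\|_{L^\infty}\|D^{\sigma}\tA\|_{L^p}\lesssim\|D^{\sigma}\tA\|_{L^p}$. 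The same balancing is what makes the bilinear estimate \eqref{Lbnz-1} (whose range $0<\sigma<2$ is exactly the restriction in the statement) relevant, through $\|D^{\sigma/2}(\tA-1)\|^{2}_{L^{2p}}\lesssim\|\tA-1\|_{L^\infty}\|D^{\sigma}\tA\|_{L^p}$; this is presumably the computation the paper defers to \cite{GGr-17}. As written, your proposal defers precisely this decisive step while describing it incorrectly, so it does not yet constitute a proof.
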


\subsection{Improved Sobolev regularities for $\Phi_t$ and $\Phi$}
We can now proceed to upgrade the $H^2$ and $H^3$ regularities for  $\Phi_t$ and $\Phi$, respectively, to the level of the ones featured in \eqref{phi-phit-hs}. As described in the start of this section, we first focus on $\Phi_t$.

\begin{prop}
Under the assumptions of Theorem \ref{main-th-v-2}, with $s>3$ replaced by $3<s<4$,
\begin{equation}
\|\Phi_t\|_{L^\infty \dot{H}^{s-1}}+\|\Phi_{tt}\|_{L^\infty \dot{H}^{s-2}}\lesssim 1
\label{phit-phitt-hs}
\end{equation}
is valid.
\label{prop-phit}
\end{prop}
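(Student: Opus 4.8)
The plan is to apply the generalized energy estimate \eqref{en-hs} to the wave equation \eqref{Box-Phit}, written in the convenient form
\[
\Box_{4+1}\Phi_t=G(r,v)\,\Phi_t, \qquad G(r,v):=\left(\tA(r,v)-1\right)\left(\tA^{-1}(r,v)+\tA^{-2}(r,v)\right),
\]
at regularity $\sigma=s-1$, so that
\[
\|\Phi_t\|_{L^\infty\dot H^{s-1}(I\times\R^4)}+\|\Phi_{tt}\|_{L^\infty\dot H^{s-2}(I\times\R^4)}\lesssim \|\Phi_t(a)\|_{\dot H^{s-1}(\R^4)}+\|\Phi_{tt}(a)\|_{\dot H^{s-2}(\R^4)}+\|G(r,v)\,\Phi_t\|_{L^1\dot H^{s-2}(I\times\R^4)}.
\]
First I would bound the forcing term $\|G(r,v)\Phi_t\|_{L^1\dot H^{s-2}}$ on a short time interval $I$ of length $|I|\sim 1$. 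Since $0<s-2<1$, I would use the fractional Leibniz rule \eqref{Lbnz-0} to split $D^{s-2}(G\,\Phi_t)$ into $\|D^{s-2}G\|_{L^{p_1}}\|\Phi_t\|_{L^{q_1}}+\|G\|_{L^{p_2}}\|D^{s-2}\Phi_t\|_{L^{q_2}}$, and then estimate each factor. For $\|G\|_{L^\infty}$ I would use \eqref{decay-A-3} and \eqref{ta-2}. For $\|D^{s-2}\Phi_t\|_{L^{q_2}}$, interpolating between the $\dot H^1$ bound \eqref{Phit-h1} and the $\dot H^2$ bound \eqref{Phit-lih2} (via \eqref{interpol-bd}) or combining with Sobolev embedding \eqref{Sob-gen} controls the needed $L^q_x$ norms; in time one picks up a factor $|I|$ from $L^1_t$ which gives the smallness. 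For $\|D^{s-2}G\|_{L^{p_1}}$ one needs control of a fractional derivative of $G$, which is where the real work is.

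The hard part will be establishing that $G(r,v)=(\tA(r,v)-1)(\tA^{-1}(r,v)+\tA^{-2}(r,v))$ has the right fractional regularity, namely $\|D^{s-2}G(r,v)\|_{L^{p}}\lesssim 1$ for appropriate $p$. The strategy here is to treat the two factors separately: for the smooth function $\tA^{-1}(r,v)+\tA^{-2}(r,v)$ of $\tA(r,v)$ one uses a Moser-type estimate together with \eqref{ta-inv-ta}, reducing everything to fractional derivatives of $\tA(r,v)$ itself; and for $\tA(r,v)-1$ one has the pointwise decay \eqref{decay-A-3}, the first-order bound $\|\nabla(\tA(r,v))\|_{L^\infty L^4}\lesssim 1$, and the second-order bound $\|\Delta(\tA(r,v))\|_{L^\infty L^2}\lesssim 1$ from \eqref{nabla-delta-A}. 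Interpolating these last two via \eqref{interpol-bd} yields $\|D^{s-1}(\tA(r,v))\|_{L^\infty L^{p}}\lesssim 1$ for a suitable $p$ when $2<s-1<3$ — wait, here $s-2<1$, so I actually only need $D^{s-2}$ of products built from $\tA(r,v)-1$ and $\nabla(\tA(r,v))$, which is comfortably covered by \eqref{Lbnz-0}, \eqref{Lbnz-2} and the $L^4/L^2$ bounds in \eqref{nabla-delta-A} together with the Kato–Ponce commutator estimates \eqref{Lbnz-1}. One must be a little careful to keep all the Hölder exponents admissible for the Strichartz/Sobolev machinery on $\R^4$, but these are routine bookkeeping once the structural decomposition is in place.

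Having bounded the forcing term on a short interval by a constant (absorbing the $|I|$-small contribution from $\Phi_t$ into the left-hand side, exactly as in the proofs of Propositions \ref{prop-Phit-h1} and \ref{prop-Phitt-h1}), I would then iterate over a finite partition $[0,T)=\bigcup_k [kT_1,(k+1)T_1]$ of $[0,T)$ into intervals of the maximal admissible length $T_1\sim 1$, propagating the bound
\[
M_s([(k+1)T_1,(k+2)T_1])\lesssim \|\Phi_t((k+1)T_1)\|_{\dot H^{s-1}}+\|\Phi_{tt}((k+1)T_1)\|_{\dot H^{s-2}}\lesssim M_s([kT_1,(k+1)T_1])
\]
from one interval to the next, where $M_s(I):=\|\Phi_t\|_{L^\infty\dot H^{s-1}(I\times\R^4)}+\|\Phi_{tt}\|_{L^\infty\dot H^{s-2}(I\times\R^4)}$ plus an auxiliary Strichartz norm. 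The base case $M_s([0,T_1])\lesssim \|\Phi_t(0)\|_{\dot H^{s-1}}+\|\Phi_{tt}(0)\|_{\dot H^{s-2}}\lesssim 1$ follows from the Appendix, which supplies the requisite Sobolev regularity of the initial data $(\Phi_t(0),\Phi_{tt}(0))$ in terms of $(v_0,v_1)\in H^s\times H^{s-1}(\R^4)$. Chaining the finitely many steps gives \eqref{phit-phitt-hs}, which completes the proof.
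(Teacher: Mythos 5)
Your overall strategy coincides with the paper's: apply the energy estimate \eqref{en-hs} to \eqref{Box-Phit} at regularity $\sigma=s-1$, get the data term from the Appendix (i.e.\ \eqref{phit0-hs}), and control the forcing $\|\Box\Phi_t\|_{L^1\dot H^{s-2}}$ via the fractional Leibniz rule \eqref{Lbnz-0}, the bound \eqref{ta-inv-ta} for $\tA^{-1}$, and interpolation of \eqref{nabla-delta-A}. However, your numerology is off in a way that affects the execution: with $3<s<4$ you have $1<s-2<2$, not $0<s-2<1$ as you assert twice. Consequently \eqref{Lbnz-2} (valid only for $0<\sigma\leq 1$) is not applicable, and the claim that you ``only need $D^{s-2}$ of products built from $\tA(r,v)-1$ and $\nabla(\tA(r,v))$'' rests on a false premise. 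The correct route---which you also gesture at---is the paper's: $\|D^{s-2}(\tA^{-1}(r,v))\|_{L^{4/(s-2)}}\lesssim\|D^{s-2}(\tA(r,v))\|_{L^{4/(s-2)}}$ by \eqref{ta-inv-ta}, whose range $1<\sigma<2$ exactly covers $\sigma=s-2$, and then $\|D^{s-2}(\tA(r,v))\|_{L^\infty L^{4/(s-2)}}\lesssim\|\nabla_x(\tA(r,v))\|_{L^\infty L^4}^{4-s}\,\|\Delta(\tA(r,v))\|_{L^\infty L^2}^{s-3}\lesssim 1$ by \eqref{interpol-bd} and \eqref{nabla-delta-A}; combined with $\|\tA(r,v)-1\|_{L^\infty_{t,x}}\lesssim 1$, $\|\tA^{-1}(r,v)\|_{L^\infty_{t,x}}\lesssim 1$, and $\|\Phi_t\|_{L^\infty L^{4/(4-s)}}+\|D^{s-2}\Phi_t\|_{L^\infty L^2}\lesssim\|\Phi_t\|_{L^\infty H^{2}}\lesssim 1$, this closes the forcing estimate exactly as in \eqref{box-phit-hs-2}.

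A second, minor point: the time-slicing and absorption scheme you import from Propositions \ref{prop-Phit-h1} and \ref{prop-Phitt-h1} is unnecessary here (though not incorrect). Unlike in those propositions, the Leibniz expansion of $\Box\Phi_t$ in $\dot H^{s-2}$ contains no top-order norm of $\Phi_t$: every $\Phi_t$-factor sits at $H^2$ level and is already controlled by \eqref{phitlil2}, \eqref{Phit-h1}, and \eqref{Phit-lih2}. Hence there is nothing to absorb, and a single application of \eqref{en-hs} on all of $[0,T)$, using $\|\Box\Phi_t\|_{L^1\dot H^{s-2}}\lesssim T\,\|\Box\Phi_t\|_{L^\infty\dot H^{s-2}}\lesssim 1$ with $T$ fixed, finishes the proof, which is what the paper does.
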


\begin{proof}
We begin by invoking the energy-type estimate \eqref{en-hs} in the context of \eqref{Box-Phit} to deduce
\begin{equation}
\aligned
\|\Phi_t\|_{L^\infty \dot{H}^{s-1}}+\|\Phi_{tt}\|_{L^\infty \dot{H}^{s-2}}\lesssim\, &\|\Phi_t(0)\|_{\dot{H}^{s-1}( \R^4)}+\|\Phi_{tt}(0)\|_{\dot{H}^{s-2}(\R^4)}\\&+\|\Box\Phi_t\|_{L^1\dot{H}^{s-2}}.
\endaligned
\end{equation}
The Appendix claims that
\begin{equation*}
\|\Phi_t(0)\|_{\dot{H}^{s-1}( \R^4)}+\|\Phi_{tt}(0)\|_{\dot{H}^{s-2}(\R^4)}\lesssim 1
\end{equation*}
and, thus, for deriving \eqref{phit-phitt-hs}, it is enough to argue that
\begin{equation}
\|\Box\Phi_t\|_{L^1\dot{H}^{s-2}}\lesssim 1.
\label{box-phit-hs-1}
\end{equation}
Based on \eqref{Box-Phit} and the fractional Leibniz bound \eqref{Lbnz-0}, we infer that
\begin{equation}
\aligned
\|\Box\Phi_t\|&_{L^1\dot{H}^{s-2}}\\
\lesssim\,&\|D^{s-2}(\tA(r,v))\|_{L^\infty L^{4/(s-2)}}\|\Phi_t\|_{L^\infty L^{4/(4-s)}}\\
&\cdot\left(\|\tA^{-1}(r,v)\|_{L^\infty_{t,x}}+\|\tA^{-1}(r,v)\|^2_{L^\infty_{t,x}}\right)\\
&+\|\tA(r,v)-1\|_{L^\infty_{t,x}}\|\Phi_t\|_{L^\infty L^{4/(4-s)}}\\
&\cdot\left(\|D^{s-2}(\tA^{-1}(r,v))\|_{L^\infty L^{4/(s-2)}}+\|D^{s-2}(\tA^{-2}(r,v))\|_{L^\infty L^{4/(s-2)}}\right)\\
&+\|\tA(r,v)-1\|_{L^\infty_{t,x}}\|D^{s-2}\Phi_t\|_{L^\infty L^2}\\
&\cdot\left(\|\tA^{-1}(r,v)\|_{L^\infty_{t,x}}+\|\tA^{-1}(r,v)\|^2_{L^\infty_{t,x}}\right).
\endaligned\label{box-phit-hs-2}
\end{equation}

First, it is easily seen that  \eqref{ta-formula} implies
\begin{equation}
\|\tA^{-1}(r,v)\|_{L^{\infty}(\R^5)}\lesssim 1.
\label{ta-inv-li}
\end{equation}

Secondly, we work on the norms involving $\Phi_t$, for which the Sobolev embeddings \eqref{Sob-gen} yield
\begin{equation*}
\|\Phi_t\|_{L^\infty L^{4/(4-s)}}\lesssim \|\Phi_t\|_{L^\infty H^{s-2}}\lesssim \|\Phi_t\|_{L^\infty H^{2}},
\end{equation*}
since $3<s<4$. One also has
\begin{equation*}
\|D^{s-2}\Phi_t\|_{L^\infty L^2}\sim \|\Phi_t\|_{L^\infty \dot{H}^{s-2}}\lesssim \|\Phi_t\|_{L^\infty H^2}
\end{equation*} 
and hence, using \eqref{phitlil2} and \eqref{Phit-lih2}, we obtain
\begin{equation*}
\|\Phi_t\|_{L^\infty L^{4/(4-s)}}+\|D^{s-2}\Phi_t\|_{L^\infty L^2}\lesssim 1.
\end{equation*}

Next, we address the norms depending on $\tA$ and $\tA^{-1}$. With the help of \eqref{Lbnz-0}, \eqref{ta-inv-ta}, and \eqref{ta-inv-li}, we deduce
\begin{equation*}
\aligned
\|D^{s-2}(\tA^{-1}(r,v))\|_{L^\infty L^{4/(s-2)}}&+\|D^{s-2}(\tA^{-2}(r,v))\|_{L^\infty L^{4/(s-2)}}\\
&\lesssim \|D^{s-2}(\tA^{-1}(r,v))\|_{L^\infty L^{4/(s-2)}}\left(1+\|\tA^{-1}(r,v)\|_{L^\infty_{t,x}}\right)\\
&\lesssim \|D^{s-2}(\tA(r,v))\|_{L^\infty L^{4/(s-2)}}.
\endaligned
\end{equation*}
By applying the interpolation inequality \eqref{interpol-bd} and \eqref{nabla-delta-A}, we derive
\begin{equation*}
\|D^{s-2}(\tA(r,v))\|_{L^\infty L^{4/(s-2)}}\lesssim \|\nabla_x(\tA(r,v))\|^{4-s}_{L^\infty L^4}\|\Delta(\tA(r,v))\|^{s-3}_{L^\infty L^2}\lesssim 1.
\end{equation*}
In the end, if we rely on \eqref{decay-A-3} and \eqref{ta-inv-li}, then we also control the $L^\infty_{t,x}$ norms in \eqref{box-phit-hs-2} and thus \eqref{box-phit-hs-1} is proved.
\end{proof}

Following this, we can finish the argument for \eqref{phi-phit-hs} and, consequently, the proof of our main result by coming up with the expected Sobolev regularity for $\Phi$. 

\begin{prop}
Under the assumptions of Proposition \ref{prop-phit}, we have
\begin{equation}
\|\Phi\|_{L^\infty \dot{H}^{s}}\lesssim 1.
\label{phi-hs+1}
\end{equation}
\end{prop}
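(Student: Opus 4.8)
**

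The plan is to derive the $\dot{H}^s$ bound for $\Phi$ directly from the equation \eqref{Box-Phi}, exactly in the spirit of the $\dot H^2$ and $\dot H^3$ arguments in the previous sections, but now keeping track of fractional derivatives. Writing $\Delta\Phi = \Phi_{tt} - \Box\Phi$, we have
\[
\|\Phi\|_{L^\infty\dot{H}^s}\sim\|D^{s-2}\Delta\Phi\|_{L^\infty L^2}\lesssim\|\Phi_{tt}\|_{L^\infty\dot{H}^{s-2}}+\|D^{s-2}\Box\Phi\|_{L^\infty L^2},
\]
and the first term is already controlled by Proposition \ref{prop-phit}. So the whole matter reduces to showing
\[
\|D^{s-2}\Box\Phi\|_{L^\infty L^2}\lesssim 1.
\]
From \eqref{Box-Phi}, $\Box\Phi = \Phi - \int_0^v \tA^{-3/2}\,dy + \varphi_{\geq 1/2}/r^3$, and the term $\varphi_{\geq 1/2}/r^3$ is smooth with all derivatives bounded and compactly supported away from the origin, so its $\dot H^{s-2}$ norm is trivially $\lesssim 1$. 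The contribution of $\Phi$ itself is $\|\Phi\|_{L^\infty\dot H^{s-2}}\lesssim\|\Phi\|_{L^\infty H^3}\lesssim 1$ by \eqref{Phi-lih3} and \eqref{Phi-h1}. Hence everything comes down to the nonlinear integral term $G:=\int_0^v\{\tA^{-3/2}-1\}\,dy$ (subtracting the constant $1$ is harmless since it integrates to $v$, which is handled like $\Phi$), and we must bound $\|D^{s-2}G\|_{L^\infty L^2}$.

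To handle $G$ I would split into the regions $r<1$ and $r\gtrsim 1$ using the cutoffs $\varphi_{<1},\varphi_{>1}$ and exploit the asymptotics \eqref{rll1}, \eqref{rgrt1}, which give pointwise $|\tA^{-3/2}-1|\lesssim|\tA-1|\lesssim\min\{1,r^{-3}\}\cdot(\text{powers of }v)$, together with the derivative formulas \eqref{ta-r}, \eqref{ta-r-g1}, \eqref{ta-r-l1} for $\tA_r$ and the bound \eqref{decay-v-3}. The cleanest route is probably to write $G$ as a composition/product structure and use the fractional Leibniz rule \eqref{Lbnz-0} together with the Kato--Ponce commutator estimates \eqref{Lbnz-1}-\eqref{Lbnz-2} and the Moser bound \eqref{Moser}: schematically $\tA^{-3/2}=F(v,\text{``}r\text{''})$ with $F$ smooth in the relevant range, so that $D^{s-2}G$ is controlled by $D^{s-2}v$ in a suitable Lebesgue exponent times lower-order factors, plus the spatial derivatives that fall on the explicit $r$-dependence. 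For the exponents one interpolates via \eqref{interpol-bd} between \eqref{nabla-delta-v} (i.e.\ $\nabla v\in L^\infty L^4$, $\Delta v\in L^\infty L^2$) to get $D^{s-2}v\in L^\infty L^{4/(s-2)}$, exactly as was done for $\tA(r,v)$ in the proof of Proposition \ref{prop-phit}, and one pairs this with $L^\infty_{t,x}$ control of the remaining factors coming from \eqref{decay-v-3}, \eqref{decay-A-3}, and \eqref{ta-inv-li}. The weights $1/r^3$ near infinity and the vanishing like $r|v|^5$ near the origin ensure the $L^2$ integrability in $r$ after the derivatives are distributed, just as in \eqref{int-tar-g1}-\eqref{int-tar}.

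The main obstacle I anticipate is the bookkeeping near $r=0$: the integrand $\tA^{-3/2}-1$ vanishes to high order in $r$ there, but differentiating $s-2<2$ times (fractionally) in the spatial variable does not obviously respect that vanishing, and one must be careful that the fractional derivative of the product $\varphi_{<1}(r)\cdot(\text{function of }ry)$ does not create a spurious $1/r$-type singularity — this is the analogue of the difficulty that forced the $\varphi_{<1}$/$\varphi_{>1}$ change of variables $w=N_1\pi+ry$ back in the construction of $\Phi_2$. I would deal with this by performing that same rescaling $y\mapsto$ new variable inside the integral on $\{r<1\}$ so that the $r$-dependence becomes smooth and nonsingular, then apply \eqref{Lbnz-0}, \eqref{Moser} on that region; on $\{r\gtrsim 1\}$ there is no singularity and the $r^{-3}$ decay from \eqref{decay-A-3} combined with \eqref{decay-v-3} makes the estimate routine. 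A secondary technical point is that $D^{s-2}$ is nonlocal, so the region splitting must be done with smooth cutoffs and the commutators $[D^{s-2},\varphi_{<1}]$ estimated via \eqref{Lbnz-2}, but these commutator terms are lower order and cause no trouble.
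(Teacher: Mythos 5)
Your opening reduction is the same as the paper's first step \eqref{phi-hs+1-v2}: write $\Delta\Phi=\Phi_{tt}-\Box\Phi$, use \eqref{phit-phitt-hs} for $\Phi_{tt}$, and reduce everything to an $\dot H^{s-2}$ bound on $\Box\Phi$. After that, however, you part ways with the paper in a direction that leaves the essential step unproven. The paper's key observation is that, because $3<s<4$, one has $1<s-2<2$, so by interpolation the $\dot H^{s-2}$ norm of $\Box\Phi$ is controlled by $\|\Box\Phi\|_{L^\infty H^2}$; consequently no fractional derivative ever has to touch the nonlinearity. One then proves $\|\Box\Phi\|_{L^\infty L^2}\lesssim 1$ from the asymptotics (this is \eqref{box-phi-l2}) and $\|\Delta\Box\Phi\|_{L^\infty L^2}\lesssim 1$ by differentiating \eqref{Box-Phi} twice explicitly (formula \eqref{delta-box-phi}) and estimating term by term with \eqref{nabla-delta-v}, \eqref{nabla-delta-A}, \eqref{decay-v-3}, \eqref{decay-A-3}, and the pointwise bounds on $\tA_r$ and $\Delta\tA$ of the type \eqref{ta-r-g1}--\eqref{ta-r-l1}. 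Everything is classical calculus on an explicit formula.

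Your plan instead keeps $D^{s-2}$ on the term $G=\int_0^v\{\tA^{-3/2}-1\}\,dy$, and this is where there is a genuine gap: the bound $\|D^{s-2}G\|_{L^\infty L^2}\lesssim 1$ is precisely the hard fractional composition estimate, and the tools you invoke (\eqref{Lbnz-0}, \eqref{Lbnz-1}--\eqref{Lbnz-2}, \eqref{Moser}) do not apply off the shelf, because the integrand is not a smooth function of $v$ alone but of the pair $(r,v)$ through the singular weight $\sin^2(ry+\varphi)/r^2$; making this rigorous would require a quantitative analogue, at every time $t$ and with only the regularity $\nabla v\in L^\infty L^4$, $\Delta v\in L^\infty L^2$ available, of the Maclaurin-series arguments the paper deploys in the Appendix only at $t=0$ (or of the separate proposition behind \eqref{ta-inv-ta}). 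Your paragraph on the $r\to 0$ behaviour correctly identifies this obstacle but resolves it only at the level of a plan (rescaling, cutoffs, commutators), not a proof. A smaller imprecision: pairing $D^{s-2}v\in L^\infty L^{4/(s-2)}$ with factors controlled merely in $L^\infty_{t,x}$ lands you in $L^{4/(s-2)}$, not $L^2$; you need the spatial decay from \eqref{decay-v-3} and \eqref{decay-A-3} to put the companion factor in $L^{4/(4-s)}$, which you do allude to but should make explicit. In short, the route you sketch could probably be completed, but at the cost of exactly the fractional-calculus work that the restriction $s-2<2$ is there to let you avoid; recognizing that $\|\Box\Phi\|_{\dot H^{s-2}}\lesssim\|\Box\Phi\|_{H^2}$ is the missing idea that turns this proposition into a short, purely classical computation.
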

\begin{proof}
We start the argument by taking advantage of \eqref{phit-phitt-hs} and $3<s<4$ to infer that
\begin{equation}
\aligned
\|\Phi\|_{L^\infty \dot{H}^{s}}\sim \|\Delta\Phi\|_{L^\infty \dot{H}^{s-2}}&\leq \|\Phi_{tt}\|_{L^\infty \dot{H}^{s-2}}+\|\Box\Phi\|_{L^\infty \dot{H}^{s-2}}\\
&\lesssim 1+\|\Box\Phi\|_{L^\infty H^{2}}.
\endaligned
\label{phi-hs+1-v2}
\end{equation}
On one hand, the combination of \eqref{rll1}, \eqref{rgrt1}, and \eqref{decay-v-3} produces
\begin{equation}
\aligned
\|\Box&\Phi\|_{L^\infty L^{2}}\\&\lesssim \left\|\min\{v^2, |v|^3\}\right\|_{L^\infty L^2([0,T)\times \{r<1\})}+\left\|\frac{|\varphi_{\geq 1/2}|}{r^3}+\frac{|v|}{r^2}\right\|_{L^\infty L^2([0,T)\times \{r\geq1\})}\\
&\lesssim \left\|1\right\|_{L^\infty L^2([0,T)\times \{r<1\})}+\left\|\frac{1}{r^3}\right\|_{L^\infty L^2([0,T)\times \{r\geq1\})}\\
&\lesssim 1.
\endaligned
\label{box-phi-l2}
\end{equation}

On the other hand, we notice that
\begin{equation}
\|\Box\Phi\|_{L^\infty \dot{H}^{2}}\sim\|\Delta\Box\Phi\|_{L^\infty L^{2}}
\label{box-phi-h2}
\end{equation}
and, subsequently, a direct computation based on \eqref{Box-Phi} yields
\begin{equation}
\aligned
\Delta\Box\Phi=\,&\frac{1}{2}\int_{0}^{v}\left\{\left(\tA^{-1/2}+3\tA^{-5/2}\right)\Delta \tA\right\}\ dy\\
&-\frac{1}{4}\int_{0}^{v}\left\{\left(\tA^{-3/2}+15\tA^{-7/2}\right)\tA_r^2\right\}\ dy\\
&+\frac{1}{2}\left(\tA^{-1/2}(r,v)+3\tA^{-5/2}(r,v)\right)\left(\partial_r\{\tA(r,v)\}+\tA_r(r,v)\right)v_r\\
&+\left(\tA^{1/2}(r,v)-\tA^{-3/2}(r,v)\right)\Delta v\\
&+\frac{\varphi_{\geq 1/2}}{r^3}.
\endaligned
\label{delta-box-phi}
\end{equation}
In previous arguments, we already relied on the last term of the right-hand side having finite $L^\infty L^2$ norm. Moreover, one can observe easily that 
\[
\tA_r(r,v)=\partial_r\{\tA(r,v)\}-\tA_y(r,v)v_r
\]
and
\begin{equation*}
\left|\tA_y(r,v)\right|=\frac{|\sin (2u)|}{r}\lesssim \tA^{1/2}(r,v).
\end{equation*}
Thus, with the help of \eqref{decay-A-3}, \eqref{nabla-delta-v}, and \eqref{nabla-delta-A}, we obtain
\begin{equation*}
\aligned
&\left\|\frac{1}{2}\left(\tA^{-1/2}(r,v)+3\tA^{-5/2}(r,v)\right)\left(\partial_r\{\tA(r,v)\}+\tA_r(r,v)\right)v_r\right\|_{L^\infty L^2}\\
&\qquad\qquad\qquad\lesssim \left(\left\|\partial_r(\tA_r(r,v))\right\|_{L^\infty L^4}+\left\|\tA^{1/2}(r,v)\right\|_{L^\infty_{t,x}}\|v_r\|_{L^\infty L^4}\right)\|v_r\|_{L^\infty L^4}\\
&\qquad\qquad\qquad\lesssim 1
\endaligned
\end{equation*}
and
\begin{equation*}
\left\|\left(\tA^{1/2}(r,v)-\tA^{-3/2}(r,v)\right)\Delta v\right\|_{L^\infty L^2}\lesssim \left\|\tA^{1/2}(r,v)\right\|_{L^\infty_{t,x}}\|\Delta v\|_{L^\infty L^2}\lesssim 1.
\end{equation*}

Hence, we are left to investigate the two integral terms in \eqref{delta-box-phi}. For the second one, we apply the trivial bound $\tA\geq 1$, \eqref{ta-r}, \eqref{ta-r-g1}, and \eqref{ta-r-l1} to deduce
\begin{equation*}
\aligned
\left|\frac{1}{4}\int_{0}^{v}\left\{\left(\tA^{-3/2}+15\tA^{-7/2}\right)\tA_r^2\right\}\ dy\right|
&\lesssim \int_{0}^{|v|}\tA_r^2\ dy\\
&\lesssim \begin{cases}
\frac{|v|+|v|^3}{r^4}, \, &r\geq 1,\\
r^2|v|^9, \, &r< 1.
\end{cases}
\endaligned
\end{equation*}
By now factoring in \eqref{decay-v-3}, we derive
\begin{equation*}
\left\|\frac{1}{4}\int_{0}^{v}\left\{\left(\tA^{-3/2}+15\tA^{-7/2}\right)\tA_r^2\right\}\ dy\right\|_{L^\infty L^2}\lesssim 1.
\end{equation*}
In what concerns the first integral term, we use \eqref{ta-r} to calculate
\begin{equation*}
\aligned
\Delta\tA=\,&\frac{\sin2(ry+\varphi)\cdot \varphi_{rr}+ 2\cos2(ry+\varphi)\cdot(y+\varphi_r)^2}{r^2}\\
&-\frac{\sin2(ry+\varphi)\cdot(y+\varphi_r)}{r^3}.
\endaligned
\end{equation*}
If we reason as we did for \eqref{ta-r-g1} and \eqref{ta-r-l1}, then in this case, it follows that
\begin{equation*}
\left|\Delta\tA\right|\lesssim \begin{cases}
\frac{1+y^2}{r^2}, \, &r\geq 1,\\
y^4, \, &r< 1,
\end{cases}
\end{equation*}
and, consequently,
\begin{equation*}
\aligned
\left|\frac{1}{2}\int_{0}^{v}\left\{\left(\tA^{-1/2}+3\tA^{-5/2}\right)\Delta \tA\right\}\ dy\right|&\lesssim \int_{0}^{|v|}\left|\Delta \tA\right|\ dy\\
&\lesssim \begin{cases}
\frac{|v|+|v|^3}{r^2}, \, &r\geq 1,\\
|v|^5, \, &r< 1.
\end{cases}
\endaligned
\end{equation*}
By applying again \eqref{decay-v-3}, we infer that
\begin{equation*}
\left\|\frac{1}{2}\int_{0}^{v}\left\{\left(\tA^{-1/2}+3\tA^{-5/2}\right)\Delta \tA\right\}\ dy\right\|_{L^\infty L^2}\lesssim 1, 
\end{equation*}
which, jointly with the estimates for the other terms in \eqref{delta-box-phi}, leads to
\begin{equation*}
\|\Delta\Box\Phi\|_{L^\infty L^{2}}\lesssim 1.
\end{equation*}
When combined with \eqref{phi-hs+1-v2}, \eqref{box-phi-l2}, and \eqref{box-phi-h2}, this bound shows that \eqref{phi-hs+1} holds true and the proof is concluded.
\end{proof}

\begin{remark}
The statements and arguments for the results in this subsection mirror the ones obtained in the corresponding part of \cite{GGr-17}.
\end{remark}


\section*{Appendix}

As discussed in the outline of the paper, we focus in this section on arguing that the hypothesis of Theorem \ref{main-th-v-2} (equivalent through \eqref{utov} to the one of Theorem \ref{main-th}) is enough to claim 
the Sobolev regularity of various expressions evaluated at $t=0$, which was assumed to be true in certain steps of the main argument.

First, we relied on the finiteness of the energy \eqref{tote-0}, for which a straightforward analysis using Sobolev embeddings, radial Sobolev estimates, and Hardy-type inequalities shows that it is valid if 
\[
(u_0,u_1)\in \left(\dot{H}^{3/2+\epsilon}\cap\dot{H}^{1-\epsilon}\right)(\R^2)\times L^2(\R^2),
\]
with $\epsilon>0$ being arbitrarily small. Later, in section \ref{sect-en}, it is easy to see that the reasoning goes through if $v(0)\in H^{3/2}(\R^4)$ (e.g., proof of \eqref{phir-l2}). However, according to the hypotheses of Theorem \ref{main-th} and Theorem \ref{main-th-v-2}, we have $(u_0, u_1)\in H^s\times H^{s-1}(\R^2)$ and $v(0)\in H^s(\R^4)$, respectively, with $s>3$.

Following this, we need to check that three more statements are true:
\begin{equation}
\|\Phi_t(0)\|_{\dot{H}^{1}( \R^4)}+\|\Phi_{tt}(0)\|_{L^2(\R^4)}\lesssim 1,
\label{phit0-h1}
\end{equation}
featured in the argument for Proposition \ref{prop-Phit-h1},
\begin{equation}
 \|\Phi_{tt}(0)\|_{\dot{H}^{1}( \R^4)}+\|\Phi_{ttt}(0)\|_{L^2(\R^4)}\lesssim 1,
\label{phitt0-h1}
\end{equation}
appearing in the proof of Proposition \ref{prop-Phitt-h1}, and
\begin{equation}
 \|\Phi_t(0)\|_{\dot{H}^{s-1}( \R^4)}+\|\Phi_{tt}(0)\|_{\dot{H}^{s-2}(\R^4)}\lesssim 1,
\label{phit0-hs}
\end{equation}
which shows up in Proposition \ref{prop-phit}.

We begin by recalling \eqref{Phit-final} and \eqref{Phitt-final}, i.e., 
\begin{equation}
\Phi_{t}=\tA^{1/2}(r,v)\,v_{t},\qquad
\Phi_{tt}=\tA^{1/2}(r,v)\,v_{tt}+ \tA^{-1/2}(r,v)\,\frac{\sin(2u)}{2r}\,v^2_{t}.
\label{phit-phitt}
\end{equation}
With the help of the latter, we calculate
\begin{equation}
\aligned
\Phi_{ttt}=\ &\tA^{1/2}(r,v)\,v_{ttt}+ \frac{3}{2}\tA^{-1/2}(r,v)\,\frac{\sin(2u)}{r}\,v_{tt}\,v_{t}\\
&+\tA^{-3/2}(r,v)\left(\cos(2u)-\,\frac{\sin^4(u)}{r^2}\right)v^3_{t}.
\endaligned
\label{phittt}
\end{equation}
Moreover, the hypothesis of Theorem \ref{main-th-v-2} on the initial data, i.e.,
\begin{equation}
v(0)\in H^s(\R^4), \qquad v_t(0)\in H^{s-1}(\R^4), \qquad s>3,
\label{v0-hs}
\end{equation}
ensures, when combined with the classical Sobolev embedding \eqref{Sob-classic}, 
\begin{equation}
v(0)\in H^{1,\infty}(\R^4), \qquad v_t(0)\in L^{\infty}(\R^4),
\label{dv0-li}
\end{equation}
Now, we can proceed to prove \eqref{phit0-h1}. 

\begin{customthm}{A.1}
Under the assumptions of Theorem \ref{main-th-v-2}, the estimate \eqref{phit0-h1} is valid.
\end{customthm}
\begin{proof}
Due to 
\[
\|\Phi_t(0)\|_{\dot{H}^{1}( \R^4)}\sim \|\partial_r\Phi_t(0)\|_{L^{2}( \R^4)}, 
\]
we use \eqref{phit-phitt} to derive 
\begin{equation*}
\partial_r\Phi_t= \tA^{1/2}(r,v)\,\partial_rv_{t}+\frac{1}{2}\tA^{-1/2}(r,v)\left(-\frac{2\sin^2(u)}{r^3}+\frac{\sin(2u)}{r^2}u_r\right)v_{t},
\end{equation*}
which is analyzed separately in the $\{r\leq 1\}$ and $\{r>1\}$ regions. For the former, we have
\begin{equation}
1\leq \tA(r,v)\lesssim 1+v^2
\label{ta-l1}
\end{equation}
and an expansion in Maclaurin series yields
\begin{equation}
\left|-\frac{2\sin^2(u)}{r^3}+\frac{\sin(2u)}{r^2}u_r\right|\lesssim |v_r||v| +rv^4.
\label{sin2u-r3-l1}
\end{equation}
If $r>1$, then
\begin{equation}
\tA(r,v)\sim 1
\label{ta-g1}
\end{equation}
and 
\begin{equation}
\left|-\frac{2\sin^2(u)}{r^3}+\frac{\sin(2u)}{r^2}u_r\right|\lesssim \frac{1}{r^3}+\frac{|u_r|}{r^2}\lesssim \frac{1}{r^3}+\frac{|v|+r|v_r|+|\varphi_r|}{r^2}.
\label{sin2u-r3-g1}
\end{equation}
Thus, by applying \eqref{v0-hs}, and \eqref{dv0-li}, we deduce
\begin{equation*}
\aligned
\|\partial_r\Phi_t(0)\|_{L^{2}( \R^4)}&\lesssim \left(1+\|v(0)\|_{L^{\infty}( \R^4)}\right)\|\partial_rv_t(0)\|_{L^{2}( \R^4)}\\
&\quad+\big(\|v_r(0)\|_{L^{\infty}( \R^4)}\|v(0)\|_{L^{\infty}( \R^4)}+\|v(0)\|^4_{L^{\infty}( \R^4)}\\
&\qquad\ +1+\|v_r(0)\|_{L^{\infty}( \R^4)}\big)\|v_t(0)\|_{L^{2}( \R^4)}\\
&\lesssim 1.
\endaligned
\end{equation*}

It follows that we need to argue for
\begin{equation}
\|\Phi_{tt}(0)\|_{L^2(\R^4)}\lesssim 1,
\label{Phitt-0}
\end{equation}
and, in relation to this goal, we observe that
\begin{equation}
\frac{|\sin(2u)|}{r}\lesssim\begin{cases}
|v|, \quad r\leq 1,\\
\frac{1}{r}, \quad \ \, r >1.
\end{cases}
\label{sin2u}
\end{equation}
Together with the bounds employed above in connection to $\Phi_t(0)$, this estimate implies
\begin{equation}
\aligned
\|\Phi_{tt}(0)\|_{L^2(\R^4)}&\lesssim  \left(1+\|v(0)\|_{L^{\infty}( \R^4)}\right)\|v_{tt}(0)\|_{L^{2}( \R^4)}\\
&\quad+ \left(1+\|v(0)\|_{L^{\infty}( \R^4)}\right)\|v_{t}(0)\|_{L^{\infty}( \R^4)}\|v_{t}(0)\|_{L^{2}( \R^4)}\\
&\lesssim \|v_{tt}(0)\|_{L^{2}( \R^4)}+1\\
&\leq \|\Box v(0)\|_{L^{2}( \R^4)}+\|\Delta v(0)\|_{L^{2}( \R^4)}+1\\
&\lesssim \|\Box v(0)\|_{L^{2}( \R^4)}+1.
\endaligned
\label{ptt-Dv}
\end{equation}

At a first glance, one could say now that \eqref{Phitt-0} is proved by simply invoking \eqref{vtt-dv}. However, a careful inspection reveals that \eqref{phit0-h1} is used implicitly in the proof of \eqref{vtt-dv} and thus this attempt is circular. Instead, we rely on asymptotics developed in the argument for \eqref{vtt-dv}, as these are independent of \eqref{phit0-h1}. If we analyze separately each term on the right-hand side of \eqref{main-v} when $t=0$, we obtain:
\begin{equation*}
\left\| \frac{1}{r}\Delta_2 \varphi\right\|_{L^2(\R^4)}+\left\|\frac{1}{r^2}\varphi_{>1} v(0)\right\|_{L^2(\R^4)} \lesssim \left\|\frac{1}{r}\right\|_{L^2( \{1\leq r\leq 2\})}+\|v(0)\|_{L^2(\R^4)}\lesssim 1,
\end{equation*}
\begin{equation*}
\aligned
&\left\|\varphi_{<1}\left(\frac{1}{r}N(r, rv(0), \nabla(rv)(0))+\frac{1}{r^2}v(0)\right)\right\|_{L^2(\R^4)}\\
&\qquad\lesssim\left\| |\varphi_{<1}|\left(|v(0)|^3+|v(0)|^5+|v(0)||\nabla v(0)|^2+rv^4(0)|v_r(0)|\right)\right\|_{L^2(\R^4)}\\
&\qquad\lesssim \bigg(\|v(0)\|^2_{L^{\infty}( \R^4)}+\|v(0)\|^4_{L^{\infty}( \R^4)}+\|\nabla v(0)\|^2_{L^{\infty}( \R^4)}\\
&\qquad\qquad+\|v(0)\|^3_{L^{\infty}( \R^4)}\|\nabla v(0)\|_{L^{\infty}( \R^4)}\bigg)\|v(0)\|_{L^{2}( \R^4)}\\
&\qquad\lesssim 1,
\endaligned
\end{equation*}
\begin{equation*}
\aligned
&\left\|\frac{1}{r}\varphi_{>1}N\left(r, rv+\varphi, \nabla(rv+\varphi)\right)\big|_{t=0}\right\|_{L^2(\R^4)}\\
&\qquad\quad\lesssim \left\||\varphi_{>1}|\left(\frac{1}{r^3}+\frac{|\nabla v(0)|^2}{r}+\frac{|v(0)|^2}{r^3}\right)\right\|_{L^2(\R^4)}\\&\qquad\quad\lesssim 1+\|\nabla v(0)\|_{L^{\infty}( \R^4)}\|\nabla v(0)\|_{L^{2}( \R^4)}+\|v(0)\|_{L^{\infty}( \R^4)}\|v(0)\|_{L^{2}( \R^4)}\\
&\qquad\quad\lesssim 1,
\endaligned
\end{equation*}
which jointly show that
\begin{equation}
\|\Box v(0)\|_{L^{2}( \R^4)}\lesssim 1.
\label{boxv-0} 
\end{equation}
Hence, due to \eqref{ptt-Dv}, one has that \eqref{Phitt-0} holds true and the proof of this proposition is concluded.
\end{proof}

Following this result, we can build upon its analysis and show that \eqref{phitt0-h1} is valid.

\begin{customthm}{A.2}
Under the assumptions of Theorem \ref{main-th-v-2}, the estimate \eqref{phitt0-h1} holds true.
\end{customthm}
\begin{proof}
First, we reduce the proof of \eqref{phitt0-h1} to showing that
\begin{equation}
\|\nabla\Box v(0)\|_{L^{2}( \R^4)}\lesssim 1.
\label{nabla-boxv-0}
\end{equation}
In order to estimate the $L^2$ norm for $\Phi_{ttt}(0)$, we work with \eqref{phittt} to infer that
\begin{equation*}
\aligned
\|\Phi_{ttt}(0)\|&_{L^{2}( \R^4)}\\
&\lesssim \|\tA^{1/2}(r,v(0))\|_{L^{\infty}( \R^4)}\|v_{ttt}(0)\|_{L^{2}( \R^4)}\\
&\quad + \left\|\frac{\sin(2u(0))}{r}\right\|_{L^{\infty}( \R^4)}\|v(0)\|_{L^{\infty}( \R^4)}\|v_{tt}(0)\|_{L^{2}( \R^4)}\\
&\quad +\left\|\cos(2u(0))-\frac{\sin^4(u(0))}{r^2}\right\|_{L^{\infty}( \R^4)}\|v_{t}(0)\|^2_{L^{\infty}( \R^4)}\|v_{t}(0)\|_{L^{2}( \R^4)}.
\endaligned
\end{equation*}
Using \eqref{ta-l1}-\eqref{sin2u-r3-g1}), we deduce
\begin{equation}
\|\tA^{1/2}(r,v(0))\|_{L^{\infty}( \R^4)}+\left\|\frac{\sin(2u(0))}{r}\right\|_{L^{\infty}( \R^4)}\lesssim 1+\|v(0)\|_{L^{\infty}( \R^4)}
\label{ta-sin2u}
\end{equation}
and 
\begin{equation}
\|v_{tt}(0)\|_{L^{2}( \R^4)}\lesssim 1.
\label{vtt-0}
\end{equation}
Furthermore, one easily notices that
\begin{equation*}
\left|\cos(2u)-\frac{\sin^4(u)}{r^2}\right|\lesssim\begin{cases}
1+r^2v^4, \quad r\leq 1,\\
1, \quad \qquad \quad\, r >1,
\end{cases}
\end{equation*}
and, subsequently, 
\begin{equation*}
\left\|\cos(2u(0))-\frac{\sin^4(u(0))}{r^2}\right\|_{L^{\infty}( \R^4)}\lesssim 1+\|v(0)\|^4_{L^{\infty}( \R^4)}.
\end{equation*}
Hence, by taking advantage of these estimates, \eqref{v0-hs}, and \eqref{dv0-li}, we derive
\begin{equation}
\aligned
\|\Phi_{ttt}(0)\|_{L^{2}( \R^4)}&\lesssim \|v_{ttt}(0)\|_{L^{2}( \R^4)}+1\\
&\leq\|\Delta v_{t}(0)\|_{L^{2}( \R^4)}+\|\partial_t \Box v(0)\|_{L^{2}( \R^4)}+1\\
&\lesssim \|\partial_t \Box v(0)\|_{L^{2}( \R^4)}+1.
\endaligned
\label{dt-boxv-0}
\end{equation}

In what concerns the $\dot{H}^1$ norm of $\Phi_{tt}(0)$, due to
\[
\|\Phi_{tt}(0)\|_{\dot{H}^{1}( \R^4)}\sim \|\partial_r\Phi_{tt}(0)\|_{L^{2}( \R^4)}, 
\]
we apply \eqref{phit-phitt} to calculate
\begin{equation*}
\aligned
\partial_r\Phi_{tt}&=\tA^{1/2}(r,v)\,\partial_rv_{tt}\\
&+\frac{1}{2}\left(-\frac{2\sin^2(u)}{r^3}+\frac{\sin(2u)}{r^2}u_r\right)\left(\tA^{-1/2}(r,v)v_{tt}-\tA^{-3/2}(r,v)\frac{\sin(2u)}{2r}v_t^2\right)\\
&+\tA^{-1/2}(r,v)\left(-\frac{\sin(2u)}{2r^2}+\frac{\cos(2u)}{r}u_r\right)v^2_{t}\\
&+\tA^{-1/2}(r,v)\,\frac{\sin(2u)}{r}\,\partial_r v_t\,v_{t}.
\endaligned
\end{equation*}
If we rely on \eqref{sin2u-r3-l1} and \eqref{sin2u-r3-g1}, then we obtain
\begin{equation*}
\aligned
&\left\|-\frac{2\sin^2(u(0))}{r^3}+\frac{\sin(2u(0))}{r^2}u_r(0)\right\|_{L^\infty(\R^4)}\\
&\qquad\qquad \lesssim\|v_r(0)\|_{L^{\infty}( \R^4)}\|v(0)\|_{L^{\infty}( \R^4)}+\|v(0)\|^4_{L^{\infty}( \R^4)}+1+\|v_r(0)\|_{L^{\infty}( \R^4)}.
\endaligned
\end{equation*}
In a similar manner, we first infer that 
\begin{equation*}
\left|-\frac{\sin(2u)}{2r^2}+\frac{\cos(2u)}{r}u_r\right|\lesssim\begin{cases}
|v_r|+r|v|^3, \qquad\qquad\quad\, r\leq 1,\\
\frac{1}{r^2}+\frac{|v|+r|v_r|+|\varphi_r|}{r}, \quad  \quad\, r >1,
\end{cases}
\end{equation*}
and, thus, 
\begin{equation*}
\left\|-\frac{\sin(2u)}{2r^2}+\frac{\cos(2u)}{r}u_r\right\|_{L^{\infty}( \R^4)}\lesssim \|v_r(0)\|_{L^{\infty}( \R^4)}+\|v(0)\|^3_{L^{\infty}( \R^4)}+1.
\end{equation*}
By bringing in the mix \eqref{v0-hs}, \eqref{dv0-li}, \eqref{ta-sin2u}, and \eqref{vtt-0}, we deduce
\begin{equation*}
\aligned
&\|\partial_r\Phi_{tt}(0)\|_{L^{2}( \R^4)}\\
&\qquad\lesssim (1+\|v(0)\|_{L^{\infty}( \R^4)})\|\partial_rv_{tt}(0)\|_{L^2(\R^4)}\\
&\quad\qquad+\left(\|v_r(0)\|_{L^{\infty}( \R^4)}\|v(0)\|_{L^{\infty}( \R^4)}+\|v(0)\|^4_{L^{\infty}( \R^4)}+1+\|v_r(0)\|_{L^{\infty}( \R^4)}\right)\\
&\quad\qquad\quad \cdot\left\{\|v_{tt}(0)\|_{L^2(\R^4)}+(1+\|v(0)\|_{L^{\infty}( \R^4)})\|v_t(0)\|_{L^{\infty}( \R^4)}\|v_t(0)\|_{L^2( \R^4)}\right\}\\
&\quad\qquad+\left(\|v_r(0)\|_{L^{\infty}( \R^4)}+\|v(0)\|^3_{L^{\infty}( \R^4)}+1\right)\|v_t(0)\|_{L^{\infty}( \R^4)}\|v_t(0)\|_{L^2( \R^4)}\\
&\quad\qquad+(1+\|v(0)\|_{L^{\infty}( \R^4)})\|v_t(0)\|_{L^{\infty}( \R^4)}\|\partial_rv_{t}(0)\|_{L^2(\R^4)}\\
&\qquad\lesssim \|\partial_rv_{tt}(0)\|_{L^2(\R^4)}+1\\
&\qquad\leq \|\partial_r\Delta v(0)\|_{L^2(\R^4)}+\|\partial_r\Box v(0)\|_{L^2(\R^4)}+1\\
&\qquad\lesssim \|\partial_r\Box v(0)\|_{L^2(\R^4)}+1.
\endaligned
\end{equation*}
Jointly with \eqref{dt-boxv-0}, this inequality shows that \eqref{phitt0-h1} follows if \eqref{nabla-boxv-0} is proved.

In proving \eqref{nabla-boxv-0}, we proceed by estimating the gradient $\nabla_{t,r}=(\partial_t, \partial_r)$ evaluated at $t=0$ for every single term on the right-hand side of \eqref{main-v}. This is done by observing that the resulting expressions share a generic core with the corresponding ones analyzed in connection to \eqref{boxv-0}. Therefore, we can strictly work on the slight differences featured in this new setting. First, a direct argument yields
\begin{equation*}
\aligned
&\left\|\nabla_{t,r}\left( \frac{1}{r}\Delta_2 \varphi\right)\right\|_{L^2(\R^4)}+\left\|\nabla_{t,r}\left(\frac{1}{r^2}\varphi_{>1} v\right)(0)\right\|_{L^2(\R^4)}\\ 
&\qquad\qquad\qquad\lesssim 1+\|v(0)\|_{L^2(\R^4)}+\|\nabla_{t,r}v(0)\|_{L^2(\R^4)}\\
&\qquad\qquad\qquad\lesssim 1.
\endaligned
\end{equation*}

Next, for terms involving the cutoff $\varphi_{<1}$, the gradient for expressions having the generic profile $\tilde{N}(rv)v^k$ can be easily estimated based on \eqref{dni-li}. Indeed, we deal with terms like
\[
\tilde{N}(rv)v^{k-1}\nabla_{t,r}v, \qquad \tilde{N}'(rv)v^{k}r\nabla_{t,r}v, \qquad \tilde{N}'(rv)v^{k+1}.
\]
and, by comparison to the analysis for \eqref{boxv-0}, $v(0)$ is replaced by $\nabla_{t,r}v(0)$ or an extra factor of $r\nabla_{t,r}v(0)$ or $v(0)$ appears. In the former scenario, the gradient is bounded in the same $L^p$ space as we bounded $v(0)$. For the latter, both extra factors are estimated in  $L^\infty(\R^4)$ using \eqref{dv0-li}, since the presence of $\varphi_{<1}$ forces $r\leq 1$. We can write similar proofs for the terms $N_3(rv)v(v_t^2-v_r^2)$ and $N_4(rv)rv^4v_r$, with slight adjustments when the gradient is applied to the derivative terms. In this situation, we are faced with estimating
\[
N_3(rv(0))v(0)(v_t(0)\nabla_{t,r}v_t(0)-v_r(0)\nabla_{t,r}v_r(0))
\]
and
\[
N_2(rv(0))rv^4(0)\nabla_{t,r}v_r(0),
\]
and all factors are bounded in $L^\infty(\R^4)$, with the exception of the second order derivatives, which are placed in $L^2(\R^4)$. We use  \eqref{vtt-0} for $v_{tt}(0)$, whereas 
\[
\|\partial_tv_r(0)\|_{L^2(\R^4)}=\|\partial_rv_t(0))\|_{L^2(\R^4)}\sim \|v_t(0))\|_{\dot{H}^1(\R^4)}\lesssim 1.
\]
To control $v_{rr}(0)$, we rely on \eqref{rad-Sob-2} and \eqref{dv0-li} to deduce
\begin{equation*}
\aligned
\|v_{rr}(0))\|_{L^2(\R^4)}&\lesssim \|\Delta v(0))\|_{L^2(\R^4)}+\left\|\frac{v_{r}(0)}{r}\right\|_{L^2(\R^4)}\\ &\lesssim \|v(0))\|_{\dot{H}^2(\R^4)}+\left\|\frac{1}{(1+r^{3/2})r}\right\|_{L^2(\R^4)}\\
&\lesssim 1,
\endaligned
\end{equation*}
which concludes the discussion of terms localized by $\varphi_{<1}$.

In what concerns the gradient for terms involving $N(r,rv+\varphi, \nabla(rv+\varphi))$, we argue that the analysis is virtually equivalent to the one above, with one exception. The differentiation introduces extra factors of $r$, which are potentially dangerous due to the presence of $\varphi_{>1}$. However, we ask the careful reader to check that, in fact, the structure of $N(r,rv+\varphi, \nabla(rv+\varphi))$ contains sufficient negative powers of $r$ to counteract this issue.
\end{proof}

\noindent\textbf{Remark A.3.}
\textit{These two propositions coincide in their statement with the corresponding results proved for the Skyrme model. Moreover, the two sets of arguments are roughly equivalent, with little modifications due to differences in the formulas for $\Phi_{tt}$ and $\Phi_{ttt}$ between this paper and \cite{GGr-17}.}

The last result of this appendix certifies that \eqref{phit0-hs} holds true.

\begin{customthm}{A.4}
Under the assumptions of Theorem \ref{main-th-v-2}, the estimate \eqref{phit0-hs} is valid.
\end{customthm}
\begin{proof}
We start by addressing the $\dot{H}^{s-1}(\R^4)$ norm and we apply the fractional Leibniz estimate \eqref{Lbnz-0} in the context of \eqref{phit-phitt} to derive
\begin{equation*}
\aligned
\|\Phi_t(0)\|_{\dot{H}^{s-1}( \R^4)}\lesssim\, &\|\tA^{1/2}(r,v(0))\|_{\dot{H}^{s-1}( \R^4)}\|v_t(0)\|_{L^\infty( \R^4)}\\ &+ \|\tA^{1/2}(r,v(0))\|_{L^\infty( \R^4)}\|v_t(0)\|_{\dot{H}^{s-1}( \R^4)}.
\endaligned
\end{equation*}
Next, by virtue of \eqref{v0-hs}, \eqref{dv0-li}, and \eqref{ta-sin2u}, we infer that
\begin{equation}
\|\Phi_t(0)\|_{\dot{H}^{s-1}( \R^4)}\lesssim \|\tA^{1/2}(r,v(0))\|_{\dot{H}^{s-1}( \R^4)}+1.
\label{phiths-1}
\end{equation}
Following this, we use the Moser inequality \eqref{Moser} for the $C^\infty$ function
\[
F:\R\to \R, \qquad F(x)=(1+x^2)^{1/2}-1,
\]
to obtain
\begin{equation}
\aligned
 &\|\tA^{1/2}(r,v(0))\|_{\dot{H}^{s-1}( \R^4)}\\
&\qquad\qquad\lesssim  \|\tA^{1/2}(r,v(0))-1\|_{H^{s-1}( \R^4)}\\
&\qquad\qquad\lesssim \gamma\left(\left\|\frac{\sin(u(0))}{r}\right\|_{L^\infty(\R^4)}\right)\left\|\frac{\sin(u(0))}{r}\right\|_{H^{s-1}(\R^4)}\\
 &\qquad\qquad\lesssim \gamma\left(\left\|\frac{\sin(u(0))}{r}\right\|_{L^\infty(\R^4)}\right)\bigg(\left\|\frac{\sin(u(0))}{r}\right\|_{H^{s-1}(\{1\leq r\leq 2\})}\\
 &\qquad\qquad\qquad\qquad\qquad\qquad\qquad\qquad+\left\|\frac{\sin(rv(0))}{r}\right\|_{H^{s-1}(\R^4)}\bigg)\\
 &\qquad\qquad\lesssim \left\|\frac{\sin(rv(0))}{r}\right\|_{H^{s-1}(\R^4)}+1,
\endaligned
\label{phiths-2}
\end{equation}
where we can motivate the last line by an argument identical to the one leading to \eqref{ta-sin2u}.

Subsequently, we rely on expansions in Maclaurin series to deduce
\begin{equation*}
\aligned
\frac{\sin(rv(0))}{r}=\ &v(0)\sum_{k\geq 0}\frac{(-1)^{3k}(rv(0))^{6k}}{(6k+1)!}+r^2v^3(0)\sum_{k\geq 0}\frac{(-1)^{3k+1}(rv(0))^{6k}}{(6k+3)!}\\
&+r^4v^5(0)\sum_{k\geq 0}\frac{(-1)^{3k+2}(rv(0))^{6k}}{(6k+5)!}\\
=\ &\left(1+H_1((rv(0))^{6}\right)v(0)+\left(\frac{-1}{6}+H_2((rv(0))^{6})\right)r^2v^3(0)\\
&+\left(\frac{1}{120}+H_3((rv(0))^{6})\right)r^4v^5(0),
\endaligned
\end{equation*}
where the function $H_i\in C^\infty(\R;\R)$ satisfies $H_i(0)=0$, for all $1\leq i\leq 3$. Given that $s>3$, $H^{s-1}(\R^4)$ is an algebra and, by also taking advantage of \eqref{v0-hs} and \eqref{Moser}, we derive
\begin{equation}
\aligned
&\left\|\frac{\sin(rv(0))}{r}\right\|_{H^{s-1}(\R^4)}\\
&\quad\qquad\lesssim \left(1+\gamma(\|r^6v^6(0)\|_{L^\infty( \R^4)})\,\|r^6v^6(0)\|_{H^{s-1}( \R^4)}\right)\\
&\quad\qquad\quad\left(\|v(0)\|_{H^{s-1}( \R^4)}+ \|r^2v^3(0)\|_{H^{s-1}( \R^4)}+ \|r^4v^5(0)\|_{H^{s-1}( \R^4)}\right)\\
&\quad\qquad\lesssim \left(1+\gamma(\|r^6v^6(0)\|_{L^\infty( \R^4)})\,\|r^6v^6(0)\|_{H^{s-1}( \R^4)}\right)\\
&\quad\qquad\quad\left(1+ \|r^2v^3(0)\|_{H^{s-1}( \R^4)}+ \|r^4v^5(0)\|_{H^{s-1}( \R^4)}\right).
\endaligned
\label{phiths-3}
\end{equation}
Next, we can obviously work with $v(0)\in H^1({\R^5})$ and, according to \eqref{rad-Sob-2} and \eqref{dv0-li}, we infer that
\begin{equation}
|v(0)|\lesssim \frac{1}{1+r^{3/2}}
\label{v0-r32}
\end{equation}
and, consequently,
\begin{equation*}
\|r^2v^3(0)\|_{L^2( \R^4)}+\|r^4v^5(0)\|_{L^2( \R^4)}+\|r^6v^6(0)\|_{L^\infty\cap L^2( \R^4)}\lesssim 1.
\end{equation*}
Thus, based on \eqref{phiths-1}-\eqref{phiths-3}, we obtain
\begin{equation}
\|\Phi_t(0)\|_{\dot{H}^{s-1}( \R^5)}\lesssim 1,
\label{phit0-hs-1}
\end{equation}
if we show that
\begin{equation*}
\|r^2v^3(0)\|_{\dot{H}^{s-1}( \R^4)}+ \|r^4v^5(0)\|_{\dot{H}^{s-1}( \R^4)}+ \|r^6v^6(0)\|_{\dot{H}^{s-1}( \R^4)}\lesssim 1.
\end{equation*}
We claim that there are clear similarities between the ways one should analyze the above three norms. This is why we present here only the argument for the first one and ask the diligent reader to fill in the details for the other two. Since $s>3$, we infer that  $D^{s-1}(r^2)=0$, which, jointly with a more involved Kato-Ponce type inequality (see Theorem 1.2 in \cite{Li-16}), \eqref{v0-hs}, and \eqref{dv0-li}, yields
\begin{equation*}
\aligned
\|r^2v^3(0)\|_{\dot{H}^{s-1}( \R^4)}&\sim \|D^{s-1}\left(r^2v^3(0)\right)\|_{L^{2}( \R^4)}\\
&\lesssim\|rv(0)\|^2_{L^\infty( \R^4)}\|v(0)\|_{\dot{H}^{s-1}( \R^4)}\\ &\quad+ \|rv^2(0)\|_{L^\infty( \R^4)}\|v(0)\|_{\dot{H}^{s-2}( \R^4)}\\
&\quad+\|v(0)\|^2_{L^\infty( \R^4)}\|v(0)\|_{\dot{H}^{s-3}( \R^4)}\\
&\lesssim \|rv(0)\|^2_{L^\infty( \R^4)}+\|rv^2(0)\|_{L^\infty( \R^4)}+1.
\endaligned
\end{equation*}
However, \eqref{v0-r32} easily implies
\[
\|rv(0)\|^2_{L^\infty( \R^4)}+\|rv^2(0)\|_{L^\infty( \R^4)}\lesssim 1
\]
and the proof of \eqref{phit0-hs-1} is concluded.

Following this, we are left to investigate the $\dot{H}^{s-2}( \R^5)$ norm in \eqref{phit0-hs} and, for this purpose, we begin by analyzing the second term on the right-hand side of \eqref{phit-phitt}. If we apply \eqref{Lbnz-0}, \eqref{v0-hs}, \eqref{dv0-li}, \eqref{ta-sin2u}, and the trivial estimate
\[
0<\tA^{-1/2}(r,v(0))\leq 1,
\]
then we deduce 
\begin{equation*}
\aligned
&\left\|\tA^{-1/2}(r,v(0))\,\frac{\sin(2u(0))}{2r}\,v^2_{t}(0)\right\|_{\dot{H}^{s-2}( \R^4)}\\
&\qquad\lesssim \left\|\tA^{-1/2}(r,v(0))\right\|_{\dot{H}^{s-2}( \R^4)}\left\|\frac{\sin(2u(0))}{r}\right\|_{L^\infty(\R^4)}\left\|v_{t}(0)\right\|^2_{L^\infty( \R^4)}\\
&\qquad\quad+\left\|\tA^{-1/2}(r,v(0))\right\|_{L^\infty( \R^4)}\left\|\frac{\sin(2u(0))}{r}\right\|_{\dot{H}^{s-2}( \R^4)}\left\|v_{t}(0)\right\|^2_{L^\infty( \R^4)}\\
&\qquad\quad+\left\|\tA^{-1/2}(r,v(0))\right\|_{L^\infty( \R^4)}\left\|\frac{\sin(2u(0))}{r}\right\|_{L^\infty( \R^4)}\left\|v_{t}(0)\right\|_{L^{\infty}( \R^4)}\left\|v_{t}(0)\right\|_{\dot{H}^{s-2}( \R^4)}\\
&\qquad\lesssim \left\|\tA^{-1/2}(r,v(0))\right\|_{\dot{H}^{s-2}( \R^4)}+\left\|\frac{\sin(2u(0))}{r}\right\|_{\dot{H}^{s-2}( \R^4)}+1.
\endaligned
\end{equation*}
However, by arguing similarly to the way we derived \eqref{phit0-hs-1}, we can infer that 
\[
\left\|\tA^{-1/2}(r,v(0))\right\|_{H^{s-1}( \R^4)}+\left\|\frac{\sin(2u(0))}{r}\right\|_{H^{s-1}( \R^4)}\lesssim 1
\]
and, subsequently, 
\[
\left\|\tA^{-1/2}(r,v(0))\,\frac{\sin(2u(0))}{2r}\,v^2_{t}(0)\right\|_{\dot{H}^{s-2}( \R^4)}\lesssim 1.
\]
Finally, we focus on the first term on the right-hand side of \eqref{phit-phitt}, and we use \eqref{Lbnz-0}, \eqref{Sob-gen}, \eqref{ta-sin2u}, \eqref{vtt-0}, and the analysis deriving \eqref{phit0-hs-1} to obtain
\begin{equation*}
\aligned
\left\|\tA^{1/2}(r,v(0))\,v_{tt}(0)\right\|_{\dot{H}^{s-2}( \R^5)}&\lesssim \left\|\tA^{1/2}(r,v(0))\right\|_{\dot{H}^{s-2, 4}( \R^4)}\left\|v_{tt}(0)\right\|_{L^4( \R^4)}\\
&\quad+\left\|\tA^{1/2}(r,v(0))\right\|_{L^\infty( \R^4)}\left\|v_{tt}(0)\right\|_{\dot{H}^{s-2}( \R^4)}\\
&\lesssim \left\|\tA^{1/2}(r,v(0))\right\|_{\dot{H}^{s-1}( \R^4)}\left\|v_{tt}(0)\right\|_{H^{s-2}( \R^4)}\\
&\quad+\left\|v_{tt}(0)\right\|_{\dot{H}^{s-2}( \R^4)}\\
&\lesssim \left\|v_{tt}(0)\right\|_{\dot{H}^{s-2}( \R^4)}+1.
\endaligned
\end{equation*}
Now, due to \eqref{v0-hs}, we deduce
\[
\aligned
 \left\|v_{tt}(0)\right\|_{\dot{H}^{s-2}( \R^4)}&\lesssim \left\|\Delta v(0)\right\|_{\dot{H}^{s-2}( \R^4)}+\left\|\Box v(0)\right\|_{\dot{H}^{s-2}( \R^4)}\\
 &\lesssim \left\|v(0)\right\|_{\dot{H}^{s}( \R^4)}+\left\|\Box v(0)\right\|_{\dot{H}^{s-2}( \R^4)}\\
 &\lesssim \left\|\Box v(0)\right\|_{\dot{H}^{s-2}( \R^4)}+1
 \endaligned
\]
and we claim that, following the framework in the analysis for the $\dot{H}^{s-1}( \R^4)$ norm, one also derives
\[
\left\|\Box v(0)\right\|_{\dot{H}^{s-2}( \R^4)}\lesssim 1.
\]
We let the avid reader verify all the details. In the end, by combining the last four estimates, we infer that
\begin{equation}
\|\Phi_{tt}(0)\|_{\dot{H}^{s-2}( \R^4)}\lesssim 1
\label{phit0-hs-2}
\end{equation}
and the proof of \eqref{phit0-hs} is finished.
\end{proof}

\noindent\textbf{Remark A.5.}
\textit{This result matches the statement of its counterpart in \cite{GGr-17}. However, the argument here is considerably more involved than the one written for the Skyrme model. There, one has 
\begin{equation*}
|v(0)|\lesssim \frac{1}{1+r^{2}}
\end{equation*}
instead of \eqref{v0-r32} and, subsequently,
\begin{equation*}
\|r^2v^2(0)\|_{L^\infty\cap L^2( \R^5)}\lesssim 1.
\end{equation*}
This leads to a relevant simplification of the proof for
\[
\left\|\frac{\sin(rv(0))}{r}\right\|_{H^{s-1}(\R^5)}\lesssim 1,
\]
in the sense that we can work with a much simpler expansion in Maclaurin series.}

\bibliographystyle{amsplain}
\bibliography{anwb-recent}

\end{document}